\documentclass[a4paper, 12pt]{amsart}
\usepackage{amsfonts, amsthm, amssymb, amsmath, stmaryrd}
\usepackage{mathrsfs,array}
\usepackage{xy}
\usepackage{verbatim}
\usepackage{amscd} 
\usepackage{tikz}
\usepackage{tikz-cd}
\usetikzlibrary{matrix,arrows,decorations.pathmorphing}
\usepackage{textcomp}
\usepackage{mathtools}
\input xy
\xyoption{all}
\usepackage[unicode]{hyperref}

\usepackage{etoolbox}
\makeatletter
\patchcmd{\@bibitem}{\ignorespaces}{\label{bib-#1}\ignorespaces}{}{}
\makeatother

\numberwithin{equation}{subsection}

\newtheorem{thm}{Theorem}[subsection]

\newtheorem{cor}[thm]{Corollary}

\newtheorem{lem}[thm]{Lemma} 
\newtheorem{prop}[thm]{Proposition}
 \theoremstyle{definition}
 \theoremstyle{definition}
\newtheorem{defn}[thm]{Definition} \theoremstyle{remark}
\newtheorem{rem}[thm]{\bf Remark}

\newtheorem{para}[thm]{\bf}

\DeclareMathOperator{\Hom}{Hom}
\DeclareMathOperator{\End}{End}
\DeclareMathOperator{\Ext}{Ext}

\DeclareMathOperator{\Spa}{Spa}

\DeclareMathOperator{\Fil}{Fil}
\DeclareMathOperator{\gr}{gr}

\DeclareMathOperator{\Lie}{Lie}

\DeclareMathOperator{\Res}{Res}

\DeclareMathOperator{\Sym}{Sym}

\def\inf{\mathrm{inf}}

\def\Ind{\mathrm{Ind}}

\def\an{\mathrm{an}}
\def\la{\mathrm{la}}

\def\Sh{\mathrm{Sh}}

\newcommand{\Z}{\mathbb{Z}}
\newcommand{\F}{\mathbb{F}}

\newcommand{\Q}{\mathbb{Q}}
\newcommand{\R}{\mathbb{R}}

\newcommand{\A}{\mathbb{A}}
\newcommand{\bC}{\mathbb{C}}

\newcommand{\rmG}{\mathrm{G}}

\newcommand{\GL}{\mathrm{GL}}
\newcommand{\SL}{\mathrm{SL}}
\newcommand{\PGL}{\mathrm{PGL}}

\newcommand{\Fl}{{\mathscr{F}\!\ell}}

\newcommand{\cO}{\mathcal{O}}
\newcommand{\cL}{\mathcal{L}}

\newcommand{\RNum}[1]{\uppercase\expandafter{\romannumeral #1\relax}}

\begin{document}

\title{Completed cohomology of Hilbert modular varieties below middle degree}
\author{Lue Pan}

\begin{abstract} 
We study the locally analytic vectors of completed cohomology of Hilbert modular varieties below middle degree. Our main result says that the universal enveloping algebra action has to factor through certain quotients. Application includes bounds on the Gelfand-Kirillov dimension and a density result. The method is a continuation of our joint work with Kai-Wen Lan.
\end{abstract}

\maketitle

\tableofcontents

\section{Introduction}
\begin{para}
Throughout this paper we fix a prime number $p$ and a totally real number field $F$ of degree $d$ over $\Q$. Let $\rmG=\mathrm{Res}_{F/\Q}\GL_2$ be the restriction of scalars of $\GL_2$ from $F$ to $\Q$. In particular $\rmG(\R)=\prod_{\tau:F\to \R} \GL_2(\R)$, where $\tau$ runs through all embeddings of $F$ into $\R$, and it acts naturally on $\mathrm{X}:=\prod_{\tau:F\to \R} (\bC\setminus\R)$ by the usual M\"obius transformation. 
For a neat compact open subgroup $K$ of $\rmG(\A_f)$ with $\A_f$ denoting the ring of finite ad\`eles, we have the usual (adelic) Hilbert modular variety
\[\Sh_{K,\bC}:=\rmG(\Q)\setminus (\mathrm{X}\times \rmG(\A_f))/K.\]
When $K$ varies,  $\{\Sh_{K,\bC}\}_{K\subseteq \rmG(\A_f)}$ forms a tower with a natural action of $\rmG(\A_f)$. It is well-known (and easy to prove by approximation theorem \cite[\S 2]{De71a}) that the action of $\rmG(\A_f)$ on the set of connected components $\{\pi_0(\Sh_{K,\bC})\}_{K\subseteq \rmG(\A_f)}$ factors through the morphism $\mathrm{Res}_{F/\Q} \det: \mathrm{Res}_{F/\Q}\GL_2 \to \mathrm{Res}_{F/\Q}\GL_1$, or equivalently, the action of $\SL_2(\A_f\otimes_{\Q} F)\subseteq \rmG(\A_f)$ on the $0$-th Betti cohomology group $\varinjlim_K H^0(\Sh_{K,\bC}, \Q_p)$ is trivial. The purpose of this note is to generalize this result to cohomology of higher degree, and discuss some consequences.
\end{para}

\begin{para}
The cohomology group we are going to consider is the completed cohomology group introduced by Emerton in \cite{Eme06}. Let $K^p$ be an open compact subgroup of $\rmG(\A_f^p)$, where $\A_f^p$ denotes the finite ad\`eles away from $p$. The completed cohomology with tame level $K^p$ is defined as
\[
\tilde{H}^i(K^p):=\varprojlim_n\varinjlim_{K_p\subseteq \rmG(\Q_p)} H^i(\Sh_{K^pK_p,\bC},\Z/p^n)
\]
where $K_p$ runs through all open compact subgroups of $\rmG(\Q_p)$. We will only consider the rational completed cohomology $\tilde{H}^i(K^p)_{\Q_p}:=\tilde{H}^i(K^p)\otimes_{\Z_p} \Q_p$ below. It is a unitary admissible $p$-adic Banach space representation of $\rmG(\Q_p)$ with unit ball given by the torsion-free quotient of $\tilde{H}^i(K^p)$. Denote by
\[\tilde{H}^i(K^p)_{\Q_p}^{\la}\subseteq \tilde{H}^i(K^p)_{\Q_p}\]
the subspace of $\rmG(\Q_p)$-locally analytic vectors. The Lie algebra $\Lie \rmG(\Q_p)$ of the $p$-adic Lie group $\rmG(\Q_p)$ acts on it via derivation. Fix $E$ a finite extension of $\Q_p$ which contains all embeddings of $F$ into an algebraic closure of $E$. Let
\[\tilde{H}^i(K^p)_{E}^{\la}=\tilde{H}^i(K^p)_{\Q_p}^{\la}\otimes_{\Q_p} E.\]
It is an $E$-linear representation of the Lie algebra 
\[\Lie \rmG_{E}:=\Lie \rmG\otimes_{\Q} E = \prod_{ \Sigma_p} \mathfrak{gl}_2(E)\]
where $ \Sigma_p$ is the set of embeddings $\tau:F\to E$. Then $|\Sigma_p|=d$ by our assumption on $E$. We will focus on the action of $\prod_{ \Sigma_p} \mathfrak{sl}_2(E)$ inside and denote by $\mathfrak{sl}_{2,\tau}(E)$ its $\tau$-component for $\tau\in\Sigma_p$.
Here is the main result.
\end{para}

\begin{thm} \label{MT}
For $i\in\{0,\cdots,d-1\}$, there is a finite $\Lie \rmG_E$-invariant filtration $\Fil^\bullet$ on $\tilde{H}^i(K^p)_{E}^{\la}$ with the following property
\begin{itemize}
\item for each $j$, the set of $\tau\in\Sigma_p$ such that $U(\mathfrak{sl}_{2,\tau}(E))$ acts via a finite-dimensional $E$-algebra on $\gr^j \tilde{H}^i(K^p)_{E}^{\la}$ has cardinality at least $d-i$.
\end{itemize}
\end{thm}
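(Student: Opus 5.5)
We will follow the strategy of the joint work with Lan, which realizes completed cohomology geometrically on the perfectoid Hilbert modular variety. By Scholze's primitive comparison theorem, $\tilde{H}^i(K^p)\hat\otimes_{\Z_p} C$ is computed, up to almost-isomorphism, by $H^i(\mathcal{X}_{K^p},\mathcal{O}^+)$ on the infinite-level perfectoid Shimura variety $\mathcal{X}_{K^p}$, or a suitable compactification. Taking locally analytic vectors commutes with this cohomology in the sense of the earlier paper. Hence $\tilde{H}^i(K^p)^{\la}_E\hat\otimes C$ is computed by the cohomology of the sheaf $\mathcal{O}^{\la}$ of $\rmG(\Q_p)$-locally analytic functions. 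We pass to the flag variety via the Hodge--Tate period map $\pi_{HT}:\mathcal{X}_{K^p}\to \Fl=\prod_{\tau\in\Sigma_p}\mathbb{P}^1$. The action of $\prod_\tau\mathfrak{sl}_{2,\tau}$ on $\mathcal{O}^{\la}$ is then governed by the horizontal nilpotent subalgebras $\mathfrak{n}^0_\tau\subset \mathfrak{sl}_{2,\tau}\otimes\cO_{\Fl}$. The key local input, proved in the Lan--Pan setup, is that $\mathfrak{n}^0_\tau$ acts trivially on $\mathcal{O}^{\la}$ for every $\tau$. Consequently the Casimir of $\mathfrak{sl}_{2,\tau}$ acts on $\mathcal{O}^{\la}$ through the horizontal Cartan $\mathfrak{h}_\tau$, giving a Harish-Chandra type action $\theta_{\mathfrak h}$.

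First we filter $\mathcal{O}^{\la}$ by the $\mathfrak{h}$-weight decomposition. Then for each $\tau$ we resolve it by the Bernstein--Gelfand--Gelfand / de Rham type complex in the $\tau$-direction, the operator $d_\tau$ or $\bar d_\tau$ obtained from the theory of the $\theta$-operator in Lan--Pan. This yields a $d$-fold "Čech--de Rham" double complex. Its graded pieces are of two kinds:
\begin{itemize}
\item pieces where $U(\mathfrak{sl}_{2,\tau})$ acts through a finite-dimensional quotient, namely pieces built from locally algebraic vectors in the $\tau$-direction (the kernel of $d_\tau$);
\item pieces built from $\ker$ or $\mathrm{coker}$ of $\bar d_\tau$ on the infinite-dimensional $\tau$-part, which behave like coherent cohomology along the $\tau$-fiber of $\pi_{HT}$.
\end{itemize}
Each graded piece of the resulting spectral sequence is indexed by a subset $S\subseteq\Sigma_p$ of "non-algebraic" directions. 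The spectral sequence for $H^i$ produces the filtration $\Fil^\bullet$ on $\tilde{H}^i(K^p)^{\la}_E$. The filtration is $\Lie\rmG_E$-invariant because all constructions are equivariant.

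The main step, and the expected obstacle, is showing that a piece indexed by $S$ contributes to $H^i$ only if $|S|\le i$; the $d-i$ remaining directions then act through finite-dimensional algebras, as required. The input is a vanishing theorem for coherent cohomology on the fibers. Along each non-algebraic direction, $\pi_{HT}$ restricted over the $\tau$-affine open is affinoid-like, so cohomology is concentrated in cohomological degree contributions of one per non-algebraic direction. We would prove this by induction on $|S|$, using the affineness of $\pi_{HT}$ and the vanishing of higher cohomology of $\mathcal{O}^{\la}$ on affinoid opens of $\Fl$ established in Lan--Pan. The subtle point is the boundary: we must handle the cusps via the minimal compactification and compactly supported versions, so that the Čech-type bound holds degreewise below middle degree. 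Finally we descend from $C$ back to $E$ by Galois invariance, since the filtration is defined over $E$ via $\mathfrak{sl}_{2,\tau}$-isotypic conditions.
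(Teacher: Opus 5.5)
There is a genuine gap, and it sits exactly at the step you flag as the main one. You want to show that pieces with $|S|$ non-algebraic directions contribute to $H^i$ only when $|S|\le i$, and you propose to get this from the affineness of $\pi_{HT}$ and the vanishing of higher cohomology of $\cO^{\la}$ on affinoids. Those facts can only give vanishing in \emph{high} degrees, because they let you compute $H^\ast(\Fl,-)$ by a \v{C}ech complex. What you need is vanishing in \emph{low} degrees, and that is a global statement.

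This already fails for $d=1$. The part of $\cO^{\la}$ on which $U(\mathfrak{sl}_2)$ does not act through a finite-dimensional quotient has plenty of sections over affinoids; otherwise the \v{C}ech complex could not produce the non-locally-algebraic vectors in $\tilde H^1$. The fact that this part has no global sections is the analogue of ``there are no modular forms of negative weight''. In the paper this global input is Theorem~\ref{thmLP}(3): $H^{<d}(\Fl,\cO^{\la}\otimes\mathcal{L}_\lambda^{-1})=0$ for parallel $\lambda=(k;w)$ with $k>0$. It is deduced in \cite{LP25} from Bhatt's mixed-characteristic Kodaira vanishing. The boundary is handled by ampleness of $L_{\Sh,\lambda}$ on the minimal compactification, not by compactly supported \v{C}ech bounds. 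Nothing in your proposal plays this role.

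You are also missing the mechanism that turns such a vanishing into information about $\mathfrak{sl}_{2,\tau}$, namely the translation-functor idea. The paper tensors with $0\to\mathcal{F}_\lambda\to V_\lambda\otimes\mathcal{L}_\lambda^{-1}\to\cO_\Fl\to0$. Kodaira vanishing then embeds the graded pieces of a filtration on $H^i(\Fl,F\otimes\Omega^{\otimes\phi}_J)$ into subquotients of $H^{i+1}(\Fl,F\otimes\Omega^{\otimes\phi'}_{J\cup J'})$ (Lemma~\ref{ind1}). Comparing the two formulas for $\Omega_\tau$ from Theorem~\ref{HCas} forces the horizontal $H_\tau$ to have non-negative integer eigenvalues along at least $d-i$ directions (Proposition~\ref{stpM2}). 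Only after that do the BGG operators $\bar d_\tau^{k_\tau}$ enter; their fibres are killed by $\ker(U(\mathfrak{sl}_{2,\tau})\to\End(\Sym^{k_\tau-1}\mathrm{std}))$. These are combined with Proposition~\ref{propSigmavan} and an induction on $|I|+|J|$.

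Relatedly, your first step, filtering $\cO^{\la}$ by an $\mathfrak h$-weight decomposition, is not available. The horizontal $\mathfrak h$-action is not locally finite, its eigenvalues are in general non-integral, and $\bar d_\tau$ exists only at integral weights. The paper avoids this by working with Koszul fibres $\mathrm{fib}(I,\phi)$ at integer eigenvalues. To reach them it first shows, again via Kodaira vanishing, that the relevant cohomology is killed by $\prod_{\tau\notin I}H_\tau^N(H_\tau-1)^N\cdots(H_\tau-N)^N$.
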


This has immediate consequence on the Gelfand-Kirillov dimension of the torsion-free quotient of $\tilde{H}^i(K^p)$. To ease notation in the introduction, we only consider the case of trivial central character here. See Theorem \ref{mtj} and Corollary \ref{ccden} for more general statements. 

We write $F_{p}=F\otimes_{\Q}\Q_p$ and $\cO_{F,p}=\cO_F\otimes_{\Z}\Z_p$. Assume that $K^p\GL_2(\cO_{F,p})$ is neat.

\begin{cor}
Let $\tilde{H}^i(K^p)_{1}\subseteq \tilde{H}^i(K^p)$ be the subspace fixed by $\cO_{F,p}^\times$ in the center of $\rmG(\Q_p)$. It is an admissible $p$-adic representation of $\PGL_2(\cO_{F,p})$.
\begin{enumerate}
\item $\Hom_{\Z_p}(\tilde{H}^i(K^p)_{1},\Q_p)$ is a finitely-generated $(\Z_p[[\PGL_2(\cO_{F,p})]]\otimes_{\Z_p}\Q_p)$-module of codimension at least $3(d-i)$, cf. \cite[\S1.2]{CE12} for the notion of codimension.
\item The subspace of $\PGL_2(\cO_{F,p})$-algebraic vectors in $\tilde{H}^d(K^p)_{1}\otimes\Q$ is dense.
\end{enumerate}
\end{cor}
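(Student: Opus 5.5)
We deduce both parts of the Corollary from Theorem~\ref{MT}. The bridge is the standard dictionary between Gelfand--Kirillov dimension of an admissible Banach representation and the size of the annihilator of its locally analytic vectors in the universal enveloping algebra. Set $G_0=\PGL_2(\cO_{F,p})$ and $\Lambda=\Z_p[[G_0]]\otimes\Q_p$, so that $\dim G_0=3d$. For each component $\mathfrak{sl}_{2,\tau}$, acting through a finite-dimensional quotient means that the Casimir $\Omega_\tau$ satisfies a polynomial relation and, more importantly, that $\mathfrak{sl}_{2,\tau}$ acts locally finitely. Such a module is \emph{algebraic in the $\tau$-direction}. In a $U(\mathfrak{sl}_2)$-module with finite-dimensional action, the annihilator contains an ideal of codimension finite, i.e.\ of Gelfand--Kirillov dimension $0$ in that factor.

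For part (1), we first pass from $\tilde H^i(K^p)$ to $\tilde H^i(K^p)_1$. Taking $\cO_{F,p}^\times$-invariants is exact up to finite torsion, since $K^p\GL_2(\cO_{F,p})$ is neat and the center acts through a quotient. Moreover, the filtration $\Fil^\bullet$ of Theorem~\ref{MT} restricts to the $\mathfrak{z}$-trivial part. Next we use the Schneider--Teitelbaum/Ardakov--Wadsley comparison: the dual $M=\Hom(\tilde H^i_1,\Q_p)$ has codimension equal to $3d$ minus the dimension of the $\Lambda$-module, and this dimension can be computed from the dual of the locally analytic vectors as a $D(G_0)$-module. It is then bounded by the dimension of $\operatorname{gr} U(\mathfrak g)/\operatorname{Ann}$, i.e.\ by the dimension of the associated variety in $\prod_\tau \mathfrak{sl}_2^*$. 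For each graded piece $\gr^j$ there is a set $S_j$ with $|S_j|\ge d-i$ on whose factors the associated variety is the origin. Hence the associated variety lies in $\prod_{\tau\notin S_j}\mathcal N_\tau$ or in $\prod_{\tau\notin S_j}\mathfrak{sl}_{2,\tau}^*$, of dimension at most $3i$.

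Codimension behaves well in short exact sequences: the dimension of an extension is the maximum of the dimensions of its pieces. A finite filtration therefore gives $\dim M\le 3i$, i.e.\ codimension at least $3(d-i)$. The main obstacle is making the bound "dimension $\le\dim$ of the support in $\mathfrak g^*$" rigorous. I would argue via Emerton's/Schmidt--Strauch's result that, for admissible $V$, the $\Lambda$-dimension of $V'$ equals the GK dimension of $(V^{\la})'$ over $D(G_0)$. This is bounded by the dimension of the $U(\mathfrak g)$-module $\operatorname{gr}$ after localizing to small affinoid subgroups, where $D(G_n)$ is a completion of $U(\mathfrak g)$ and the annihilator passes to the completion flatly.

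For part (2), we use Poincaré duality and the spectral sequence relating $\tilde H^\bullet$ to its dual. If the algebraic vectors $W\subseteq\tilde H^d_1\otimes\Q$ were not dense, there would be a nonzero continuous functional on the quotient $\tilde H^d_1\otimes\Q/\overline{W}$. This quotient is an admissible representation without $G_0$-algebraic vectors whose dual is a nonzero $\Lambda$-module. Via Emerton's duality spectral sequence $E_2^{a,b}=\Ext^a_\Lambda(\tilde H_b,\Lambda)\Rightarrow \tilde H^{a+b}$, together with the codimension bounds from (1) applied to lower degrees, the top-degree dual is controlled by $\Ext^{3d}$-type terms. Such terms are finite over $\Lambda$ and supported on the dimension-zero part, i.e.\ built out of locally algebraic pieces. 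Theorem~\ref{MT} with $i=0$ gives the base case, and density follows by dualizing.
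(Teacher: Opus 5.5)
Your part (1) is essentially the paper's argument for Theorem~\ref{mtj}. You pass to locally analytic vectors via $d(\Pi)=d(\Pi^{\la})$, use flatness of $D_{r_n}(H,E)$ over $U(\Lie H)_E$, and bound the grade of each graded piece below by $\dim_E\prod_{\tau\in S_j}\mathfrak{sl}_{2,\tau}=3|S_j|\geq 3(d-i)$. You then take the minimum over the filtration, which is valid because the rings are Auslander--Gorenstein. Restricting to the $\cO_{F,p}^\times$-fixed part is harmless since \emph{(i-SL)} passes to $\mathfrak{g}$-submodules; no exactness of invariants is needed.

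Part (2) has a genuine gap. Your sketch never produces what actually forces density: an embedding of the dual into a free module, $(\tilde H^d)^*\hookrightarrow \Lambda^{\oplus m}$.
\begin{itemize}
\item Dualizing gives a surjection $C(\PGL_2(\cO_{F,p}),\Q_p)^{\oplus m}\twoheadrightarrow \tilde H^d_1$, which preserves algebraic vectors. Algebraic vectors are dense in $C(\PGL_2(\cO_{F,p}),\Q_p)$ by \cite[Lemma A.1]{Pas14}, which gives density; that is the paper's route.
\item The embedding comes from reflexivity, $(\tilde H^d)^*\cong E^0\bigl((\tilde H^d_c)^*\bigr)$. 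This is obtained from the Calegari--Emerton spectral sequences $E_2^{a,b}=E^a\bigl((\tilde H^b)^*\bigr)\Rightarrow (\tilde H_c^{2d-a-b})^*$ and its mirror with $\tilde H$ and $\tilde H_c$ exchanged.
\item These converge to the dual of the \emph{other} theory in complementary degree, not to $\tilde H^{a+b}$ as you wrote. So compactly supported cohomology cannot be avoided.
\item Isolating $E_2^{0,d}$ requires $E^{d-i}$ and $E^{d-i+1}$ of both $(\tilde H^i)^*$ and $(\tilde H^i_c)^*$ to vanish for $i<d$. Your bounds from (1) give this only for $\tilde H^i$, since $3(d-i)>d-i+1$.
\item For $\tilde H^i_c$, Theorem~\ref{MT} says nothing. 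The paper uses $\tilde H^0_c=0$, the boundary long exact sequence, the description of boundary completed cohomology as parabolic induction from the torus, and Dirichlet's unit theorem ($d_0\geq 1$) to get grade $\geq d+1$.
\end{itemize}

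Your substitute claim cannot be right. You say the top-degree dual is controlled by ``$\Ext^{3d}$-type terms'' of dimension zero, built from locally algebraic pieces. But a module of grade $3d$ is finite-dimensional over $\Q_p$. By contrast, $(\tilde H^d_1)^*$ is reflexive, hence of grade $0$ when nonzero; the relevant Ext group is $E^0$, not $E^{3d}$.

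The contradiction argument also does not get started. The quotient by the closure of the algebraic vectors can still contain algebraic vectors, because these need not lift along non-split extensions.

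Finally, passing from $\widetilde{K_p}$ to trivial central character on $\PGL_2(\cO_{F,p})$ needs an extra step. Over $A=E[[\widetilde{K_p}]]$, the functor $\Hom_A(-,A)$ must commute with reduction modulo the central-character ideal. The paper proves this in Corollary~\ref{ccden} by reducing to a connected component. There $K_p^\circ\cap Z(\widetilde{K_p})$ is finite and $K_p^\circ Z(\widetilde{K_p})$ is open.
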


\begin{para}
The proof of the main result will be given in the next section. The very rough idea is as follows. Assume for simplicity $F=\Q$, i.e. $d=1$. In our previous work \cite{Pan22}, we show that the (complexified) locally analytic completed cohomology can be localized on $\mathbb{P}^1$ and it is naturally isomorphic to $H^*(\Fl,\cO^{\la})$, where $\Fl$ denotes the adic flag variety of $\GL_2$ hence isomorphic to $\mathbb{P}^1$, and $\cO^{\la}$ is some $\tilde{\mathscr{D}}$-module on it. In a joint work with Kai-Wen Lan \cite{LP25}, we deduce from Bhatt's mixed characteristic Kodaira vanishing result that $H^{<1}(\Fl,\cO^{\la}\otimes_{\cO_{\Fl}} \mathcal{L}^{-1})=0$ for the minimal ample line bundle $\mathcal{L}$ on $\Fl$. 
Now from the point of view of \cite[\S 2]{BG99}, the twist of $\mathcal{L}^{-1}$ intertwines with certain translation functor when computing the cohomology. More precisely, starting with a $\tilde{\mathscr{D}}$-module, we can first twist it by a line bundle and take its cohomology (e.g. $H^{<1}(\Fl,\cO^{\la}\otimes_{\cO_{\Fl}} \mathcal{L}^{-1})$). This will be the same as first taking cohomology then applying a  translation functor determined by $\mathcal{L}$ \cite[Prop.2.8]{BG99}.
 We conclude that $H^{<1}(\Fl,\cO^{\la})$ is in the kernel of some translation functor, which turns out to be exactly those killed by $\mathfrak{sl}_2$ in this case. 

It should be clear that this argument will work for general Shimura variety as well. 
\end{para}

Notation: for $G$ a topological group $G$ and $A$ a topological vector space, $C(G,A)$ denotes the space of continuous $A$-valued functions on $G$ viewed as the regular $G$-representation by right translation. 

\subsection*{Acknowledgement} 
The author is partially supported by a Sloan Research Fellowship. He would like to thank Kai-Wen Lan and Yuanyang Jiang for useful conversations and thank the anonymous referee for helpful comments and pointing out a mistake in the first version.

\section{Proof of main result}
\subsection{Recollection of results from \cite{LP25}}
\begin{para}
We review some results obtained in \cite{LP25}, which are based on Bhatt's mixed characteristic Kodaira vanishing result and Rodriguez Camargo's work on the geometric Sen theory over general Shimura variety \cite{RC24}. First we introduce some notations on the Lie theory side.

Recall that $\mathrm{X}=\prod_{\tau:F\to \R} (\bC\setminus\R)$. There is a $\rmG(\R)$-equivalent embedding
\[\prod_{\tau:F\to \R} (\bC\setminus\R)\subseteq \prod_{\tau:F\to \R}\mathbb{P}^1(\bC)\] 
(usually called the Borel embedding) of $\mathrm{X}$ into the $\bC$-points of the flag variety of $\rmG$. The stabilizer of $\rmG(\bC)$-action on $\prod_{\tau:F\to \R}\mathbb{P}^1(\bC)$ at the point  $(\sqrt{-1},\cdots,\sqrt{-1})$ \footnote{We use $\sqrt{-1}$ to denote the usual $i\in \bC$ here because the letter $i$ will be reserved for denoting cohomology degrees later.} defines a parabolic subgroup $\mathrm{P}^{\mathrm{std}}_{\bC}$ of $\rmG_{\bC}:=\rmG\times_{\Q}\bC$. The base change from $\R$ to $\bC$ of its real points defines a Levi subgroup denoted by $\mathrm{M}_{\bC}$. The opposite parabolic of $\mathrm{P}^{\mathrm{std}}_{\bC}$ with respect to $\mathrm{M}_{\bC}$ will be denoted by $\mathrm{P}_{\bC}$.

Let $C$ denote the completion of an algebraic closure $\overline{\Q}_p$ of $\Q_p$ and fix an isomorphism $C\cong \bC$. Under this isomorphism, $\mathrm{M}_\bC$ determines a subgroup $\mathrm{M}$ of $\rmG_C$. We denote its Lie algebra by $\mathfrak{h}$,\footnote{We use the notation $\mathfrak{h}$ here instead of $\mathfrak{m}$ in the reference because $\mathfrak{m}$ is a Cartan subalgebra in this case.} and define $\mathrm{P}$ and $\mathfrak{p}$ similarly associated to $\mathrm{P}_\bC$. Let $\Fl$ be the adic space over $\Spa(C,\cO_C)$ associated to the flag variety $\rmG_C/\mathrm{P}$. Concretely $\Fl$ is isomorphic to product of $d$ copies of (adic) projective lines. Given a finite-dimensional representation $W$ of $\mathrm{P}$, as usual we get a $\rmG(C)$-equivariant vector bundle  on $\Fl$ which will be denoted by $\mathcal{W}$. In particular, an integral weight $\lambda\in \mathfrak{h}^*$ defines a character of $\mathrm{M}$ hence a character of $\mathrm{P}$ via the natural quotient morphism $\mathrm{P}\to\mathrm{M}$, and thus we get an equivariant line bundle $\mathcal{L}_\lambda$ on $\Fl$. The Lie algebra $\mathfrak{g}:=\Lie\rmG(C)$ acts naturally on all these equivariant bundles (particularly $\cO_\Fl$) via derivation. Denote by $\mathfrak{n}$ the nilradical of $\mathfrak{p}$. Consider the following sequence of $\mathrm{P}$-representations
\[\mathfrak{n}\subseteq \mathfrak{p}\subseteq \mathfrak{g}\]
and the associated equivariant vector bundles 
\[\mathfrak{n}^0\subseteq  \mathfrak{p}^0\subseteq \mathfrak{g}^0.\]
Since the action of $\mathrm{P}$ on $\mathfrak{g}$ extends naturally to $\rmG_C$, there is a natural isomorphism
\[\mathfrak{g}^0\cong  \cO_\Fl \otimes_C \mathfrak{g}.\]
On the other hand, $\mathrm{P}$ acts trivially on $\mathfrak{p}/\mathfrak{n}\cong \mathfrak{h}$. Thus $ \mathfrak{p}^0/ \mathfrak{n}^0$ is a trivial vector bundle and 
\[H^0(\Fl, \mathfrak{p}^0/ \mathfrak{n}^0)=\mathfrak{h}.\]
\end{para}

\begin{para}
Let $\Sigma$ denote the set of embeddings $F\to C$. Then there is a natural decomposition
\[\mathfrak{g}=\prod_{\tau\in\Sigma}\mathfrak{g}_\tau\] 
with $\mathfrak{g}_\tau= \mathfrak{gl}_2(C)$.
Similarly we have $\mathfrak{n}= \prod_{\tau\in\Sigma}\mathfrak{n}_\tau$, $\mathfrak{h}= \prod_{\tau\in\Sigma}\mathfrak{h}_\tau$ and $\Fl=\prod_{\tau\in\Sigma}\Fl_{\tau}$. Let $\mathfrak{sl}_{2,\tau}:=\mathfrak{sl}_2(C)\subseteq \mathfrak{g}_\tau$.
\end{para}

\begin{defn} \label{defnpw}
\hspace{2em}
\begin{enumerate}
\item For $\tau\in\Sigma$, we define $H_\tau\in\mathfrak{sl}_{2,\tau}\cap\mathfrak{h}_\tau$ as follows. Consider the standard representation $\mathrm{std}$ of $\mathfrak{gl}_2(C)$. As a $\mathfrak{p}_\tau$-representation, it is a non-split extension of one-dimensional representations
\[0\to (1,0)\to \mathrm{std}\to (0,1) \to 0.\]
We require that $H_\tau$ acts on $(1,0)$ via $-1$ and on $(0,1)$ via $1$.
\item The center $Z(\mathfrak{g}_\tau)$ is naturally isomorphic to $C$. We will write an integral weight $\lambda\in\mathfrak{h}^*$ as
 \[\lambda=(\{k_\tau\}_{\tau\in\Sigma};w)\]
if $k_\tau=\lambda(H_\tau)$, and
 $\lambda|_{Z(\mathfrak{g}_\tau)}: Z(\mathfrak{g}_\tau)=C\to C$ is multiplication by $w\in \Z$  for all $\tau\in\Sigma$. (Only weights of this form will be considered below.) It's easy to see that $k_\tau$ and $w$ must have the same parity. We say $(\{k_\tau\}_{\tau\in\Sigma};w)$ is \textit{parallel} if all $k_\tau$'s are the same and write it as $(k;w)$ with $k=k_\tau$.
 \item If $\lambda=(\{k_\tau\}_{\tau\in\Sigma};w)$ satisfies $k_\tau\geq 0$ for all $\tau$, we denote by $V_\lambda$ the $\mathfrak{g}$-representation of \textit{lowest weight} $\lambda$ with respect to $\mathfrak{p}$. Hence
 \[V_\lambda\cong \bigotimes_\tau (\Sym^{k_\tau}\mathrm{std} \otimes \det{}^{(w-k_\tau)/2}).\]
\end{enumerate} 
\end{defn}
 
\begin{para} \label{ShL}
Given an integral weight $\lambda\in\mathfrak{h}^*$, it defines an equivariant line bundle on $\prod_{\tau:F\to \R}\mathbb{P}^1(\bC)$ by viewing $\prod_{\tau:F\to \R}\mathbb{P}^1_\bC$ as the flag variety of $\rmG_\bC$ associated to the parabolic $\mathrm{P}^{\mathrm{std}}_{\bC}$. We can restrict this equivariant line bundle to $\mathrm{X}$ via the Borel embedding, and it descends to a line bundle $L_{\Sh,\lambda}$ on Hilbert modular variety $\Sh_{K,\bC}$ for a neat level $K$ if $\lambda$ is parallel.
\end{para}

\begin{para}
Finally we relate completed cohomology to $\Fl$. The following is a consequence of \cite[Theorem 6.2.6]{RC24} and \cite[Theorem 4.3.4]{LP25}.
\end{para}

\begin{thm} \label{thmLP}
There exists an object $\cO^{\la}$ in the bounded derived category of sheaves of $\cO_\Fl$-modules on $\Fl$ equipped with a Lie algebra action of $\mathfrak{g}$ when viewed as an object in the bounded derived category of sheaves of abelian groups. The $\mathfrak{g}$-module structure is compatible with the $\cO_\Fl$-module structure, i.e. $[l,f]=l(f)$ holds when acting on $\cO^{\la}$ for $l\in \mathfrak{g}$ and $f\in\cO_\Fl$. Moreover $\cO^{\la}$ satisfies following properties.
\begin{enumerate}
\item Differential equations: Consider the action of $\mathfrak{g}^0\cong  \cO_\Fl \otimes_C \mathfrak{g}$ on $\cO^{\la}$ by $\cO_\Fl$-linearly extending the action of $\mathfrak{g}$. This action is zero when restricted to the subsheaf $\mathfrak{n}^0$.
\item Comparison with locally analytic completed cohomology: There is a $\mathfrak{g}$-equivariant isomorphism $H^i(\Fl,\cO^{\la})\cong \tilde{H}^i(K^p)^\la\widehat\otimes_{\Q_p} C$, where the right hand side is computed as the direct limit of $p$-adically completed tensor product of the subspace of $K_p$-analytic vectors $\tilde{H}^i(K^p)^{K_p-\an}\subseteq \tilde{H}^i(K^p)$ and $C$ with $K_p$ running through open compact subgroups of $\rmG(\Q_p)$.
\item Kodaira vanishing: Suppose $\lambda=(k;w)$ is a parallel weight. Then
\[H^{<d}(\Fl, \cO^{\la}\otimes^\mathbb{L}_{\cO_\Fl}\mathcal{L}_{\lambda}^{-1})=0\]
if $k>0$. (Following the reference, we will drop the ${}^\mathbb{L}$ in the derived tensor product from now on.)
\end{enumerate}
\end{thm}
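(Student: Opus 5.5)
The plan is to assemble Theorem \ref{thmLP} from the cited results rather than prove anything from scratch: almost every assertion is already in \cite[Theorem 6.2.6]{RC24} or \cite[Theorem 4.3.4]{LP25}. What remains is to fix the construction of $\cO^{\la}$ and check that the conventions (the parabolic $\mathrm{P}$, the weight normalisation of Definition \ref{defnpw}, the line bundles $\mathcal{L}_\lambda$) match those references.

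\textbf{Construction.} I will take the infinite-level perfectoid Hilbert modular variety $\mathcal{S}_{K^p}$ over $C$. If the toroidal or minimal compactification is needed for the comparison, I will use its version with the boundary handled as in \cite{LP25}. Let $\pi_{HT}:\mathcal{S}_{K^p}\to\Fl$ be the Hodge--Tate period map. I will define $\cO^{\la}$ as $R\pi_{HT,*}$ of the $\rmG(\Q_p)$-locally analytic vectors of the completed structure sheaf $\widehat{\cO}$, taken in the derived sense as in \cite{RC24}. Then:
\begin{itemize}
\item The $\cO_\Fl$-module structure comes from $\pi_{HT}^{-1}\cO_\Fl\subseteq\widehat{\cO}$.
\item The $\mathfrak{g}$-action is the derivative of the $\rmG(\Q_p)$-action on locally analytic vectors, extended $C$-linearly.
\item The Leibniz relation $[l,f]=l(f)$ holds because $\pi_{HT}$ is $\rmG(\Q_p)$-equivariant.
\end{itemize}

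\textbf{Properties (1) and (2).}
\begin{itemize}
\item \emph{Property (1)} is the geometric Sen theory statement of \cite{RC24}: the Sen operator is the $\cO$-linear extension of the $\mathfrak{g}$-action restricted to $\mathfrak{n}^0$, and it vanishes on locally analytic vectors of $\widehat{\cO}$. The only thing to check is that the nilradical in \cite{RC24} is the one of our $\mathfrak{p}$, i.e.\ the opposite of $\mathrm{P}^{\mathrm{std}}$ under the Borel embedding. This is a sign convention, fixed by comparing with the Hodge--Tate filtration.
\item \emph{Property (2)} has three inputs: Scholze's primitive comparison $H^i(\mathcal{S}_{K^p},\widehat{\cO})\cong\tilde{H}^i(K^p)\widehat\otimes C$; the fact that taking locally analytic vectors commutes with cohomology, because $\tilde{H}^i$ is admissible and locally analytic vectors are exact on admissible representations (\cite{RC24}, or the argument of \cite{Pan22}); and the Leray spectral sequence for $\pi_{HT}$, which converts cohomology of $\mathcal{S}_{K^p}$ into $H^i(\Fl,\cO^{\la})$. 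Equivariance for $\mathfrak{g}$ is formal.
\end{itemize}

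\textbf{Property (3) and the main obstacle.} The main work is Kodaira vanishing, which is \cite[Theorem 4.3.4]{LP25}. Its proof runs as follows. The projection formula identifies $\cO^{\la}\otimes\mathcal{L}_\lambda^{-1}$ with the locally analytic part of $\widehat{\cO}\otimes\pi_{HT}^*\mathcal{L}_\lambda^{-1}$. For parallel $\lambda=(k;w)$, the pullback $\pi_{HT}^*\mathcal{L}_\lambda$ agrees with $L_{\Sh,\lambda}$ from \S\ref{ShL}, and for $k>0$ this bundle is ample on the minimal compactification. Bhatt's mixed characteristic Kodaira vanishing, applied to the perfectoid tower with $p$-adic completion, then kills completed cohomology below degree $d=\dim$ of $L_{\Sh,\lambda}^{-1}$-twisted $\widehat{\cO}$. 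Passing to locally analytic vectors and using the comparison from (2) gives $H^{<d}=0$.

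The delicate points are:
\begin{itemize}
\item Bhatt's theorem needs a projective variety, so the boundary must be handled; this is why \cite{LP25} works with the minimal compactification.
\item One must match the convention that $k>0$ corresponds to $\mathcal{L}_\lambda$ ample rather than antiample, i.e.\ compare the sign of $H_\tau$ in Definition \ref{defnpw} with the Hodge--Tate weights.
\item One must justify that passing to locally analytic vectors preserves the vanishing, which uses admissibility again.
\end{itemize}
I expect this sign and convention bookkeeping, rather than any new argument, to be the real obstacle.
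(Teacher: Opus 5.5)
Your proposal is correct and takes essentially the paper's route. The theorem is just \cite[Theorem 6.2.6]{RC24} combined with \cite[Theorem 4.3.4]{LP25}, and the only thing the paper actually checks is the hypothesis of the latter: for parallel $\lambda=(k;w)$ with $k>0$, a positive power of $L_{\Sh,\lambda}$ extends to an ample line bundle on the minimal compactification, which holds by Baily--Borel. You assert this only loosely (``this bundle is ample on the minimal compactification'') rather than justifying it, while re-deriving the cited results at length and flagging sign bookkeeping as the main obstacle, which is not where the paper places any difficulty.
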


\begin{proof}
Only the last part needs extra explanation. To apply \cite[Theorem 4.3.4]{LP25}, we need to check that  a positive power of the line bundle $L_{\Sh,\lambda}$ (introduced in \ref{ShL}) extends to an ample line bundle on its minimal compactification. This is well-known (for example by Baily-Borel) as the weight is parallel.
\end{proof}

\begin{rem}
In the case of modular curves, $\cO^{\la}$ was denoted as $\cO^{\la}_{K^p}$ in \cite{Pan22}.
\end{rem}

\begin{para}
We will make use of the horizontal action in the proof. See \cite[\S 4.4, 5.2]{LP25} for more details. By the second part of Theorem \ref{thmLP}, the action of $\mathfrak{p}^0$ on $\cO^{\la}$ factors through the quotient $\mathfrak{p}^0/\mathfrak{n}^0$. Hence we get actions of $H_\tau\in \mathfrak{h}=H^0(\Fl,\mathfrak{p}^0/\mathfrak{n}^0)$ on $\cO^{\la}$ for $\tau\in\Sigma$. These operators are $\cO_{\Fl}$-linear because $\mathfrak{p}^0$ acts trivially on $\cO_{\Fl}$. In particular, for a representation $W$ of $\mathfrak{p}$, we can also define actions of $H_\tau$ on $\cO^{\la}\otimes_{\cO_{\Fl}} \mathcal{W}$ by acting solely on $\cO^{\la}$. Clearly this action is functorial in $W$. This will be called the horizontal action and should be distinguished from the constant action of $\mathfrak{h}\subseteq\mathfrak{g}$ on $\cO^{\la}$. It commutes with the action of $\mathfrak{g}$.

On the other hand, the diagonal action of $\mathfrak{g}$ on $\cO^{\la}\otimes_{\cO_{\Fl}} \mathcal{W}$ extends to the universal enveloping algebra $U(\mathfrak{g})$. We denote by $\Omega_{\tau}\in Z(U(\mathfrak{sl}_2(C)))\subseteq U(\mathfrak{g})$ the Casimir operator of the $\tau$-th component. Its relationship with the action of $H_\tau$ is as follows.
\end{para}

\begin{thm} \label{HCas}
Suppose $\lambda$ is an integral weight of $\mathfrak{h}$. The operators $H_\tau$ and $\Omega_\tau$ satisfy
\[\Omega_\tau=\frac{1}{2}(H_\tau+1+\lambda(H_\tau))^2-\frac{1}{2}\]
when acting on $\cO^{\la}\otimes_{\cO_{\Fl}} \mathcal{L}_\lambda$.
\end{thm}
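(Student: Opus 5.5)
Since everything here takes place in the $\tau$-component, we may work with a single $\mathfrak{sl}_2(C)=\mathfrak{sl}_{2,\tau}$. Write the Casimir in terms of a basis adapted to $\mathfrak{p}$, and then reduce the identity to the differential equation in Theorem \ref{thmLP}(1) together with the description of the horizontal action. Concretely, choose a basis $H,e,f$ of $\mathfrak{sl}_2(C)$ with $H=H_\tau\in\mathfrak{h}_\tau$, $f\in\mathfrak{n}_\tau$ and $e$ in the opposite nilradical, normalized so that $[H,f]=\pm 2f$ and $[e,f]=\pm H$; the signs are fixed by Definition \ref{defnpw}(1). Up to the normalization of the Casimir, which I would fix as $\Omega=\frac12H^2+ef+fe$ (or the appropriate sign variant), one can rewrite $\Omega=\frac12H^2\pm H+2ef$ or $\frac12 H^2\mp H+2fe$, placing the $\mathfrak{n}$-element on the side where it will act first.

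Next, pass to the $\cO_\Fl$-linear extension $\mathfrak{g}^0=\cO_\Fl\otimes\mathfrak{g}$ pointwise. At a point $x\in\Fl$, the fibre of $\mathfrak{n}^0$ is $\mathrm{Ad}(g)\mathfrak{n}$, where $x=g\mathrm{P}$. Because $\Omega$ is $\mathrm{Ad}$-invariant, it is also equal to the same expression written in the conjugated basis $\mathrm{Ad}(g)H,\mathrm{Ad}(g)e,\mathrm{Ad}(g)f$, which are local sections of $\mathfrak{g}^0$. More cleanly, the central element $\Omega\in U(\mathfrak{g})$ is a constant section of $\cO_\Fl\otimes U(\mathfrak{g})$, and locally it can be written as $\frac12\tilde H^2\pm\tilde H+2\tilde e\tilde f$ with $\tilde f$ a section of $\mathfrak{n}^0$ and $\tilde H$ a lift of the section $H_\tau$ of $\mathfrak{p}^0/\mathfrak{n}^0$. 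Here one must check that the $\cO_\Fl$-linear extension of the action of $U(\mathfrak{g})$ is well defined, i.e.\ that the compatibility $[l,f]=l(f)$ makes $\cO_\Fl\otimes U(\mathfrak{g})$ act, via the standard construction of the sheaf of rings $\cO_\Fl\otimes U(\mathfrak{g})$ with the twisted multiplication. On $\cO^{\la}\otimes\mathcal{L}_\lambda$, the section $\tilde f$ acts through $\mathfrak{n}^0$. On the $\cO^{\la}$ factor it acts by zero by Theorem \ref{thmLP}(1), while on $\mathcal{L}_\lambda$ the action of $\mathfrak{p}^0$ through the $\mathfrak{g}$-derivation action differs from the fibrewise $\mathrm{P}$-action. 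This is exactly where $\lambda$ enters: on the equivariant bundle $\mathcal{W}$, the $\cO$-linearly extended $\mathfrak{g}$-action restricted to $\mathfrak{p}^0$ is given by (minus) the fibre representation $W$. Hence $\tilde f$ acts by $0$ on $\mathcal{L}_\lambda$ as well, since $\lambda$ is a character of $\mathrm{P}$ trivial on $\mathfrak{n}$, and $\tilde H$ acts on $\cO^{\la}\otimes\mathcal{L}_\lambda$ as the horizontal $H_\tau$ plus the scalar $\pm\lambda(H_\tau)$.

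Substituting gives $\Omega_\tau=\frac12(H_\tau+\lambda(H_\tau))^2+(H_\tau+\lambda(H_\tau))$ up to the correct signs, which equals $\frac12(H_\tau+1+\lambda(H_\tau))^2-\frac12$. The main obstacle is bookkeeping of signs and conventions: checking whether $\mathfrak{g}$ acts on $\mathcal{L}_\lambda$ via $+\lambda$ or $-\lambda$ on $\mathfrak{p}^0$, which of $ef$ or $fe$ occurs, and which Casimir normalization is used. I would pin these down by testing the formula on $\cO_\Fl$ itself ($\lambda=0$, where $\Omega$ acts by $0$ on constants and the horizontal $H$ must then act by $0$ or $-2$). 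A second check is $\cO^{\la}\otimes\mathcal{L}_\lambda$ containing $V_\lambda$-type sections, comparing with the known Casimir eigenvalue $\frac12 k(k+2)$ of $\Sym^k$. Once the signs are fixed, the computation is formal and functorial in $\lambda$.
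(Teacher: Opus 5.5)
Your plan is the right kind of argument: write $\Omega_\tau$ in a frame adapted to $\mathfrak{p}^0$, let the $\mathfrak{n}^0$-term act first and die, then read off the horizontal $H_\tau$. The paper simply quotes \cite[Theorem 4.4.4]{LP25}, which is a Beilinson--Bernstein-type statement of this sort. But there is a genuine gap: the step that carries all the content is false as you state it.

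$\mathrm{Ad}$-invariance gives $\Omega=\sum c_{ij}\,\mathrm{Ad}(g)X_i\,\mathrm{Ad}(g)X_j$ for each fixed $g$. Once $g=g(x)$ varies with the point, composing $\tilde X=\sum_k a_kX_k$ with $\tilde Y=\sum_l b_lY_l$ on a module satisfying $[l,f]=l(f)$ produces the pointwise product plus the first-order term $\sum_{k,l}a_kX_k(b_l)\,Y_l$.

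Concretely, take $f\in\mathfrak{n}_\tau$ with $[H,f]=-2f$ and $[e,f]=H$, so that $\Omega=\frac12H^2+ef+fe=\frac12H^2-H+2ef$. Use the chart $u\mapsto \exp(ue)\mathrm{P}$, with $\mathfrak{g}$ acting on functions by differentiating $\phi\mapsto\phi(g^{-1}\cdot)$. Then:
\begin{itemize}
\item $\tilde e=e$ acts on functions by $-\partial_u$;
\item $\tilde H=H-2ue$ and $\tilde f=f+uH-u^2e$;
\item sections of $\mathfrak{p}^0$ act by zero on $\cO_\Fl$.
\end{itemize}
One finds $\tilde e\tilde f=(\tilde e\tilde f)_{\mathrm{pw}}-\tilde H$, and the commutator $[\tilde f,\tilde e]$ is $0$, not $-\tilde H$. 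Hence
\[\Omega=\tfrac12\tilde H^2+\tilde H+2\tilde e\tilde f .\]
Your naive transport instead gives $\frac12\tilde H^2-\tilde H+2\tilde e\tilde f$, that is, $\Omega=\frac12(H_\tau-1)^2-\frac12$ on $\cO^{\la}$, which is wrong. The discrepancy $2\tilde H$ is the $\rho$-shift. It is the actual substance of the theorem, not a matter of sign conventions.

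Your proposed sign checks cannot detect this error.
\begin{itemize}
\item On $\cO_\Fl$, all of $\mathfrak{p}^0$ acts by zero. So the horizontal $H$ is $0$ there, and both candidate formulas give $\Omega=0$.
\item On the $V_\lambda$-type sections of $\cL_\lambda$, the horizontal $H$ is again $0$. Comparing with $\frac12k(k+2)$ only fixes the product of two signs: the sign of the linear term, and the sign $\epsilon$ with which $\tilde H$ acts on $\cL_\lambda$.
\end{itemize}
Since you leave $\epsilon$ open, you could equally well land on $\frac12(1+\lambda(H_\tau)-H_\tau)^2-\frac12$. To close the gap you need two things. First, the twisted-product computation above. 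Second, a direct determination that $\epsilon=+1$: a section of $\mathfrak{p}^0$ acts on the quotient $V_\lambda\otimes_C\cO_\Fl\to\cL_\lambda$ through the pointwise action of $\mathfrak{p}_x$ on the fibre.

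One further point: $\cO^{\la}$ is only an object of $D^b$, and an identity of endomorphisms checked on an open cover does not glue automatically in a derived category. You would need the $\cO_\Fl\otimes_C U(\mathfrak{g})$-action to exist at a level where such local verification is legitimate. The paper avoids this by citing \cite{LP25}.
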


Below we will view $\cO^{\la}\otimes_{\cO_{\Fl}} \mathcal{L}_\lambda$ as a $(\mathfrak{h}\times\mathfrak{g})$-module with respect to the horizontal action of $\mathfrak{h}$ and the (constant) action of $\mathfrak{g}$.

\begin{proof}
This follows from \cite[Theorem 4.4.4]{LP25} and the discussion in the beginning of \S 5.2.
\end{proof}

\subsection{The case of modular curve}
\begin{para}
We explain a proof of Theorem \ref{MT} in the case of modular curve, i.e. $F=\Q$ in this subsection. Even though a much simpler and direct argument exists in this case, I hope that this subsection gives some basic idea of the argument in the next subsection. Since $d=1$ and $\Sigma$ has only one element now, we will  drop the $\tau$'s in $H_\tau$ and $\Omega_\tau$. 
\end{para}

\begin{prop} \label{stpM1}
$H^0(\Fl,\cO^{\la})$ is annihilated by the horizontal operator $H$.
\end{prop}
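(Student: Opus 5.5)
The plan is to use the Kodaira vanishing of Theorem \ref{thmLP}(3) together with the standard representation, in the spirit of the translation-functor argument of \cite[\S2]{BG99}.

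\textbf{Step 1: an injection from Kodaira vanishing.} The $\mathfrak{p}$-representation $\mathrm{std}$ sits in the exact sequence
\[0\to (1,0)\to \mathrm{std}\to (0,1)\to 0\]
of Definition \ref{defnpw}. Tensoring with $\cO^{\la}$ over $\cO_\Fl$ gives an exact triangle
\[\cO^{\la}\otimes\mathcal{L}_{(1,0)}\to \cO^{\la}\otimes_{\cO_\Fl}\mathcal{W}_{\mathrm{std}}\to \cO^{\la}\otimes\mathcal{L}_{(0,1)}.\]
Since $\mathrm{std}$ extends to a representation of $\rmG_C$, the middle bundle is $\cO_\Fl\otimes_C\mathrm{std}$. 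Its $H^0$ is therefore $H^0(\Fl,\cO^{\la})\otimes\mathrm{std}$, compatibly with the diagonal $\mathfrak{g}$-action. On this space the horizontal $H$ acts as $H\otimes 1$, by functoriality of the horizontal action.

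The weight of $(1,0)$ is $(-1;1)$. This is the inverse of the parallel weight $(1;-1)$, which has $k=1>0$. Theorem \ref{thmLP}(3) with $d=1$ then gives $H^0(\Fl,\cO^{\la}\otimes\mathcal{L}_{(1,0)})=0$. Hence
\[M\otimes\mathrm{std}\hookrightarrow H^0(\Fl,\cO^{\la}\otimes\mathcal{L}_{(0,1)}),\qquad M:=H^0(\Fl,\cO^{\la}),\]
and this map is equivariant for both the horizontal $\mathfrak{h}$-action and the $\mathfrak{g}$-action.

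\textbf{Step 2: compare Casimirs via Theorem \ref{HCas}.} On $\cO^{\la}\otimes\mathcal{L}_{(0,1)}$ (so $\lambda(H)=1$) we have $\Omega_{\rm diag}=\tfrac12(H+2)^2-\tfrac12$. By the injection, the same identity holds on $M\otimes\mathrm{std}$ with $H$ acting as $H\otimes1$. On $M$ itself ($\lambda=0$) we have $\Omega=\tfrac12(H+1)^2-\tfrac12$, and $\Omega$ acts on $\mathrm{std}$ by $3/2$. Write
\[\Omega_{\rm diag}=\Omega\otimes1+1\otimes\Omega+T,\qquad T=\tfrac12 h\otimes h+e\otimes f+f\otimes e\]
(with the normalization of $\Omega$ fixed so the formulas above hold). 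Subtracting the formulas gives
\[T=\tfrac12\big((H+2)^2-(H+1)^2\big)-\tfrac32=H\otimes 1\quad\text{on } M\otimes\mathrm{std}.\]

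\textbf{Step 3: extract $H=0$.} Evaluate the identity of Step 2 on $m\otimes e_1$ and $m\otimes e_2$, where $e_1,e_2$ is a weight basis of $\mathrm{std}$ with $e e_1=0$, $fe_1=e_2$, $fe_2=0$, $ee_2=e_1$, and compare components along $e_1,e_2$.
\begin{itemize}
\item From $m\otimes e_1$ one gets $\tfrac12 hm\otimes e_1+em\otimes e_2=Hm\otimes e_1$. Hence $em=0$ and $Hm=\tfrac12 hm$.
\item From $m\otimes e_2$ one gets $fm=0$ and $Hm=-\tfrac12 hm$.
\end{itemize}
Together these give $Hm=0$ for all $m\in M$. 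As a bonus, $\mathfrak{sl}_2$ kills $M$.

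\textbf{Main obstacle.} The delicate points are the normalizations. First, one must check that the weight $(1,0)$ really corresponds to $\mathcal{L}_{\lambda}^{-1}$ for a parallel $\lambda$ with $k>0$, using the sign convention for $H$ in Definition \ref{defnpw}. Second, the Casimir normalization implicit in Theorem \ref{HCas} must be pinned down, so that the constant $\tfrac32$ and the exact form of $T$ (including whether central $\mathfrak{gl}_2$ terms enter) come out as claimed. I would fix both by testing Theorem \ref{HCas} on $\cO_\Fl$ and on $\mathrm{std}$ before running Step 3.
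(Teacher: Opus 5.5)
Your argument is correct, but it takes a different route from the paper's. The paper first twists by $\mathcal{L}_{(0,1)}^{-1}$. Kodaira vanishing then kills $H^0(\Fl,\cO^{\la}\otimes\mathcal{L}_{(0,1)}^{-1})\otimes_C\mathrm{std}$, so the connecting map embeds $H^0(\Fl,\cO^{\la})$ into $H^1(\Fl,\cO^{\la}\otimes\mathcal{L}_{(1,0)}\otimes\mathcal{L}_{(0,1)}^{-1})$. Comparing the two scalar formulas of Theorem \ref{HCas} (with $\lambda(H)=0$ and $\lambda(H)=-2$) gives $(H+1)^2=(H-1)^2$, hence $H=0$; no Casimir on a tensor product is ever computed.

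You instead stay in degree $0$. You apply Kodaira vanishing to $\mathcal{L}_{(1,0)}=\mathcal{L}_{(1;-1)}^{-1}$, which is legitimate: Theorem \ref{thmLP}(3) allows any $w$, and this bundle differs from the paper's $\mathcal{L}_{(0,1)}^{-1}$ only by $\det$. You then use the diagonal Casimir on $M\otimes_C\mathrm{std}$, which is literally the translation-functor heuristic of the introduction. This costs more bookkeeping but buys more. You also get $em=fm=0$, i.e.\ the $F=\Q$ case of Theorem \ref{MT} directly, bypassing Proposition \ref{stpM21}, Lemma \ref{sl2trv} and Proposition \ref{stpM3}.

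The paper's twisted version, however, is the one that iterates in the Hilbert case (Lemma \ref{ind1}). There the analogue of your sub-bundle, $\mathcal{W}_{\lambda_1}$, has graded pieces of mixed sign that Kodaira vanishing for parallel weights does not cover.

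One harmless slip: the normalization forced by Theorem \ref{HCas} (the one giving $3/2$ on $\mathrm{std}$) is $\Omega=ef+fe+\tfrac12h^2$. Its coproduct cross term is $T=h\otimes h+2e\otimes f+2f\otimes e$, twice what you wrote. Step 3 then gives $Hm=hm$, $Hm=-hm$ and $2em=2fm=0$, so your conclusion is unchanged.
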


\begin{proof}
This is proved in \cite{LP25}. We recall the argument here. Consider the sequence of $\mathfrak{p}$-representations
\[0\to (1,0)\to \mathrm{std}\to (0,1) \to 0\]
and the associated sequence of equivariant vector bundles on $\Fl$
\[0\to \mathcal{L}_{(1,0)}\to \cO_{\Fl}\otimes_C \mathrm{std}\to \mathcal{L}_{(0,1)} \to 0.\]
Tensor this sequence with $\mathcal{L}_{(0,1)}^{-1}\otimes_{\cO_\Fl} \cO^{\la}$ and take cohomology. We get an exact sequence
\[ H^0(\Fl,\cO^{\la}\otimes_{\cO_\Fl} \mathcal{L}_{(0,1)}^{-1})\otimes_C\mathrm{std} \to H^0(\Fl,\cO^{\la}) \to H^1(\Fl,\cO^{\la}\otimes_{\cO_\Fl} \mathcal{L}_{(1,0)}\otimes_{\cO_\Fl} \mathcal{L}_{(0,1)}^{-1}).
\]
By the Kodaira vanishing in Theorem \ref{thmLP}, the first term is zero. Hence the second map is injective. Now we observe that this map is also $(H,\Omega)$-equivariant, and by Theorem \ref{HCas}, 
\begin{itemize}
\item $\Omega=\frac{1}{2}(H+1)^2-\frac{1}{2}$ on $\cO^{\la}$,
\item $\Omega=\frac{1}{2}(H+1-2)^2-\frac{1}{2}$ on $\cO^{\la}\otimes_{\cO_\Fl}\mathcal{L}_{(1,0)}\otimes_{\cO_\Fl} \mathcal{L}_{(0,1)}^{-1}$.
\end{itemize}
We deduce that $\frac{1}{2}(H+1)^2-\frac{1}{2}=\frac{1}{2}(H+1-2)^2-\frac{1}{2}$ on $H^0(\Fl,\cO^{\la})$. It follows that $H=0$.
\end{proof}

\begin{para} \label{bard}
Denote by $\cO^{\la,H=0}$ the fiber of the morphism $\cO^{\la} \stackrel{H}{\to} \cO^{\la}$ in $D^b(\cO_{\Fl})$ (note that $H$ is $\cO_\Fl$-linear). Although both $\cO^{\la}$ and $\cO^{\la,H=0}$  are sheaves in the case of modular curve, cf. \cite[Lemma 5.1.2]{Pan22}, we still use the language of derived category to be aligned with the general Hilbert case. 
Let $\mathfrak{g}^0_0=\cO_\Fl\otimes_{C} \mathfrak{sl}_2(C)\subseteq \mathfrak{g}^0$ and $\mathfrak{p}^0_0=\mathfrak{p}^0\cap \mathfrak{g}^0_0$. Then $H$ generates $\mathfrak{p}^0_0/\mathfrak{n}^0$. Therefore $\mathfrak{p}^0_0$ acts trivially on $\cO^{\la,H=0}$ and the action of $\mathfrak{g}^0_0$ on $\cO^{\la,H=0}$ induces a natural $\mathfrak{g}$-equivariant morphism
\[\bar{d}: \cO^{\la,H=0}\to \cO^{\la,H=0}\otimes_{\cO_\Fl} (\mathfrak{g}^0_0/\mathfrak{p}^0_0)^{-1}. \]
(This is only a morphism in the bounded derived category of sheaves of abelian groups on $\Fl$ as it is not $\cO_\Fl$-linear.)
The line bundle $\mathfrak{g}^0_0/\mathfrak{p}^0_0$ is isomorphic to the tangent bundle of $\Fl$, hence ample. Its dual $(\mathfrak{g}^0_0/\mathfrak{p}^0_0)^{-1}$ is isomorphic to the cotangent bundle $\Omega_\Fl\cong \mathcal{L}_{(-1,1)}^{-1}$.
\end{para}

\begin{prop} \label{stpM21}
$H^0(\Fl,\cO^{\la,H=0}\otimes_{\cO_\Fl} (\mathfrak{g}^0_0/\mathfrak{p}^0_0)^{-1})=0$.
\end{prop}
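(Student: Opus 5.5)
The plan is to reduce the statement to the Kodaira vanishing of Theorem \ref{thmLP}(3), using the defining triangle of $\cO^{\la,H=0}$. The argument uses only part (3), applied in degrees $-1$ and $0$.

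First, identify the line bundle. By \ref{bard}, $(\mathfrak{g}^0_0/\mathfrak{p}^0_0)^{-1}\cong \Omega_\Fl\cong \mathcal{L}_{(-1,1)}^{-1}$. I would check that $(-1,1)$ is a parallel weight $(k;w)$ with $k>0$. In the proof of Proposition \ref{stpM1}, the Casimir computation shows that $\mathcal{L}_{(1,0)}\otimes\mathcal{L}_{(0,1)}^{-1}$ has $\lambda(H)=-2$. Hence $\mathcal{L}_{(-1,1)}$ has $\lambda(H)=2$ and trivial central character, i.e.\ $\lambda=(2;0)$. Concretely, this is the tangent bundle $\mathcal{O}(2)$ on $\mathbb{P}^1$. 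Since $d=1$, parallelity is automatic. Theorem \ref{thmLP}(3) therefore gives
\[H^{j}(\Fl,\cO^{\la}\otimes_{\cO_\Fl}\mathcal{L}_{(2;0)}^{-1})=0\quad\text{for all } j<1,\]
in particular for $j=0$ and $j=-1$.

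Next, since $H$ is $\cO_\Fl$-linear, there is a distinguished triangle in $D^b(\cO_\Fl)$:
\[\cO^{\la,H=0}\to\cO^{\la}\xrightarrow{H}\cO^{\la}\xrightarrow{+1}.\]
Tensoring with the line bundle $\mathcal{L}_{(2;0)}^{-1}$ is exact, so it preserves this triangle. Taking $R\Gamma(\Fl,-)$ gives the long exact sequence
\[H^{-1}(\Fl,\cO^{\la}\otimes\mathcal{L}_{(2;0)}^{-1})\to H^0(\Fl,\cO^{\la,H=0}\otimes\mathcal{L}_{(2;0)}^{-1})\to H^0(\Fl,\cO^{\la}\otimes\mathcal{L}_{(2;0)}^{-1}).\]
Both outer terms vanish by the previous step, so the middle term vanishes, which is the claim. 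In the modular curve case $\cO^{\la}$ is a sheaf, so the $H^{-1}$ term vanishes trivially anyway.

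The only point needing care is the weight bookkeeping. One must confirm that the cotangent bundle corresponds to the inverse of a line bundle of parallel weight with $k>0$, rather than $k<0$, under the conventions of Definition \ref{defnpw}. I would verify this in two ways: via the Casimir shift in Proposition \ref{stpM1}, and directly from the fact that $\mathfrak{g}/\mathfrak{p}\cong\mathfrak{n}^{\mathrm{std}}$ has $H$-weight $2$ under the normalization that $H$ acts by $-1$ on $(1,0)$ and by $1$ on $(0,1)$.
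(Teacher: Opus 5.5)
Your proof is correct and takes the same approach as the paper. You tensor the defining triangle of $\cO^{\la,H=0}$ with $(\mathfrak{g}^0_0/\mathfrak{p}^0_0)^{-1}\cong\mathcal{L}_{(2;0)}^{-1}$ and kill $H^0$ of the fiber using the Kodaira vanishing of Theorem \ref{thmLP}(3) in degrees $-1$ and $0$. Your weight bookkeeping ($k=2>0$, $w=0$) matches the paper's conventions, for instance its description of $\Omega^1_{\{\tau\}}$ as having $k_\tau=-2$.
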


\begin{proof}
Consider the exact triangle \[\cO^{\la,H=0}\otimes_{\cO_\Fl} (\mathfrak{g}^0_0/\mathfrak{p}^0_0)^{-1} \to \cO^{\la}\otimes_{\cO_\Fl} (\mathfrak{g}^0_0/\mathfrak{p}^0_0)^{-1} \stackrel{H}{\to} \cO^{\la}\otimes_{\cO_\Fl} (\mathfrak{g}^0_0/\mathfrak{p}^0_0)^{-1}\stackrel{+1}{\to}.\]
The claim follows by taking the cohomology and invoking the Kodaira vanishing.
\end{proof}

Denote by $\mathrm{fib}(\bar{d})$ the fiber of the morphism $\bar{d}: \cO^{\la,H=0}\to \cO^{\la,H=0}\otimes_{\cO_\Fl} (\mathfrak{g}^0_0/\mathfrak{p}^0_0)^{-1}$ in the bounded derived category of sheaves of abelian groups. There is a natural action of $\mathfrak{g}$ on it.
\begin{lem} \label{sl2trv}
$\mathfrak{sl}_2(C)$ acts trivially on $\mathrm{fib}(\bar{d})$.
\end{lem}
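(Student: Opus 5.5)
The plan is to show that the $\mathfrak{sl}_2(C)$-action on $\mathrm{fib}(\bar d)$ is induced, $\cO_\Fl$-linearly, from the action of $\mathfrak{g}^0_0$, and that every part of $\mathfrak{g}^0_0$ acts trivially on this fiber.

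First, the action of $\mathfrak{g}^0_0 = \cO_\Fl\otimes_C\mathfrak{sl}_2(C)$ on $\cO^{\la,H=0}$ has two pieces. The piece coming from $\mathfrak{p}^0_0$ is zero: $\mathfrak{n}^0$ acts by zero by Theorem \ref{thmLP}(1), and $\mathfrak{p}^0_0/\mathfrak{n}^0$ is generated by $H$, which vanishes on $\cO^{\la,H=0}$ by construction. So the action factors through the line bundle $\mathfrak{g}^0_0/\mathfrak{p}^0_0$. Its $\cO_\Fl$-linear adjoint is exactly $\bar d$.

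Concretely, for $X\in\mathfrak{sl}_2(C)$, write $\bar X$ for the image of $1\otimes X$ in $H^0(\Fl,\mathfrak{g}^0_0/\mathfrak{p}^0_0)$. The constant action of $X$ on $\cO^{\la,H=0}$ is then the composite
\[\cO^{\la,H=0}\xrightarrow{\bar d}\cO^{\la,H=0}\otimes_{\cO_\Fl}(\mathfrak{g}^0_0/\mathfrak{p}^0_0)^{-1}\xrightarrow{\langle\bar X,-\rangle}\cO^{\la,H=0},\]
where the second map is contraction with $\bar X$ and is $\cO_\Fl$-linear.

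Second, this factorization has to be upgraded to a statement about the fiber, and this is where $\mathfrak{g}$-equivariance enters. The morphism $\bar d$ is $\mathfrak{g}$-equivariant, with $\mathfrak{g}$ acting diagonally on the target. So $\mathfrak{g}$ acts on the triangle
\[\mathrm{fib}(\bar d)\to \cO^{\la,H=0}\xrightarrow{\bar d} \cO^{\la,H=0}\otimes(\mathfrak{g}^0_0/\mathfrak{p}^0_0)^{-1}.\]
On the first map, the action of $X$ becomes the contraction composed with $\bar d$. That composite vanishes after restriction along $\mathrm{fib}(\bar d)\to\cO^{\la,H=0}$, since the composite of two consecutive maps in a triangle is zero. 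On sections this says: a section killed by $\bar d$ is killed by all of $\mathfrak{sl}_2$. For a genuine sheaf (the modular-curve case, where $\cO^{\la,H=0}$ is a sheaf by \cite[Lemma 5.1.2]{Pan22}), the fiber is computed by the two-term complex $[\cO^{\la,H=0}\to\cO^{\la,H=0}\otimes(\mathfrak{g}^0_0/\mathfrak{p}^0_0)^{-1}]$.

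On this two-term complex, I would write the action of $X$ on each term as contraction composed with $\bar d$. On degree $0$ this is the formula above. On degree $1$ I would use the analogous formula: on the twisted term, $\mathfrak{g}^0_0$ again acts through the quotient $\mathfrak{g}^0_0/\mathfrak{p}^0_0$ after accounting for the diagonal action on the line bundle. The action of $X$ on the complex is then null-homotopic, with homotopy given by the contraction $\langle\bar X,-\rangle$ from degree $1$ to degree $0$. Indeed, $d h + h d$ recovers the action of $X$, because the action of $X$ on $\cO^{\la,H=0}$ is $\langle \bar X,\bar d(-)\rangle$ and similarly in degree $1$. Hence $X$ acts by zero on $\mathrm{fib}(\bar d)$.

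The main obstacle is the degree-$1$ identity, namely that the action of $X$ on $\cO^{\la,H=0}\otimes(\mathfrak{g}^0_0/\mathfrak{p}^0_0)^{-1}$ equals $\bar d\circ\langle\bar X,-\rangle$. This requires the diagonal action on the twisting bundle to combine with the horizontal action correctly, and I expect it to need a short local computation on $\Fl=\mathbb{P}^1$ in coordinates, using $[l,f]=l(f)$.

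I would first try to avoid that computation by working with a flat sheaf-level description: $\mathrm{fib}(\bar d)$ is the subsheaf of sections annihilated by $\mathfrak{g}^0_0$, i.e. by all of $\cO_\Fl\otimes\mathfrak{sl}_2$. On such sections each constant $X=1\otimes X$ acts by zero. Since the action is functorial, it is then trivial on the fiber.
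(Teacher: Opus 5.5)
Your proposal is correct, and your main route, the explicit null-homotopy, differs from the paper's.

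The paper fixes a single nonzero $u\in\mathfrak{sl}_2(C)$ and works on the open set where the image $\bar u$ of $1\otimes u$ trivializes $\mathfrak{g}^0_0/\mathfrak{p}^0_0$. There $\bar d$ becomes the action of $u$, so $u$ kills $\mathrm{fib}(\bar d)$. This is your homotopy with $h=\mathrm{id}$, and it needs no computation because $\bar u$ is $u$-invariant. The paper then concludes because the annihilator is a nonzero Lie ideal of the simple algebra $\mathfrak{sl}_2(C)$.

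You instead write down, for every $X$ at once, the global homotopy $h=\langle\bar X,-\rangle$. The degree-one identity you flagged does hold:
\begin{itemize}
\item $[l,f]=l(f)$ gives the Leibniz rule $\bar d(fs)=df\otimes s+f\,\bar d s$.
\item Because the bundle has rank one, $(Xs)\otimes\omega=\omega(\bar X)\,\bar d s$.
\item The equivariant structure on $(\mathfrak{g}^0_0/\mathfrak{p}^0_0)^{-1}\cong\Omega_\Fl$ is the Lie derivative. By Cartan's formula, and since $d\omega=0$ on a curve, it satisfies $\mathcal{L}_{\bar X}\omega=d(\omega(\bar X))$.
\end{itemize}
Together these give $X(s\otimes\omega)=\bar d\bigl(\omega(\bar X)s\bigr)$, which is exactly $dh$ in degree one.

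The paper's route is shorter and more conceptual. Yours costs this local check, but it uses no simplicity. It also produces the vanishing on all of $\Fl$ at once, rather than deducing it from local identifications; endomorphisms in the derived category do not glue automatically.

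Your fallback ``sheaf-level'' description identifies $\mathrm{fib}(\bar d)$ with the subsheaf $\ker\bar d$. That presupposes $\bar d$ is surjective, which you have not justified. The homotopy argument does not need this, so keep it as the proof.
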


\begin{proof}
This follows from the construction of $\bar{d}$. Concretely, locally one can trivialize $(\mathfrak{g}^0_0/\mathfrak{p}^0_0)^{-1}$ and identify $\bar{d}$ with a non-zero element $u$ in $\mathfrak{sl}_2(C)$. Hence $u$ annihilates $\mathrm{fib}(\bar{d})$. The claim follows as $\mathfrak{sl}_2(C)$ is simple.
\end{proof}

\begin{prop} \label{stpM3}
$\mathfrak{sl}_2(C)$ acts trivially on $H^0(\Fl,\cO^{\la,H=0})$.
\end{prop}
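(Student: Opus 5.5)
We argue via the exact triangle defining $\mathrm{fib}(\bar{d})$:
\[\mathrm{fib}(\bar{d})\to \cO^{\la,H=0}\stackrel{\bar{d}}{\to} \cO^{\la,H=0}\otimes_{\cO_\Fl} (\mathfrak{g}^0_0/\mathfrak{p}^0_0)^{-1}\stackrel{+1}{\to}.\]
This is a triangle in the bounded derived category of sheaves of abelian groups on $\Fl$. All three terms carry compatible $\mathfrak{g}$-actions, since $\bar{d}$ is $\mathfrak{g}$-equivariant. Taking hypercohomology on $\Fl$ gives a $\mathfrak{g}$-equivariant long exact sequence, whose relevant part is
\[H^0(\Fl,\mathrm{fib}(\bar{d}))\to H^0(\Fl,\cO^{\la,H=0})\to H^0(\Fl,\cO^{\la,H=0}\otimes_{\cO_\Fl} (\mathfrak{g}^0_0/\mathfrak{p}^0_0)^{-1}).\]
By Proposition \ref{stpM21} the third term vanishes. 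Hence the first map is a $\mathfrak{g}$-equivariant surjection onto $H^0(\Fl,\cO^{\la,H=0})$.

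It therefore suffices to show that $\mathfrak{sl}_2(C)$ acts trivially on $H^0(\Fl,\mathrm{fib}(\bar{d}))$. For $u\in\mathfrak{sl}_2(C)$, Lemma \ref{sl2trv} says that $u$ acts on $\mathrm{fib}(\bar{d})$ by the zero endomorphism, as a morphism in the derived category. Hypercohomology is a functor, so the induced endomorphism of $H^0(\Fl,\mathrm{fib}(\bar{d}))$ is also zero. Since $H^0(\Fl,\mathrm{fib}(\bar{d}))$ surjects equivariantly onto $H^0(\Fl,\cO^{\la,H=0})$, every element of the target is killed by $u$.

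The one point needing care is the meaning of Lemma \ref{sl2trv}. We need the action of each $u$, regarded as an endomorphism of $\mathrm{fib}(\bar{d})$ in the derived category, to be zero. Trivial action on some cohomology sheaf would not suffice. The proof of that lemma gives exactly this: locally $\bar{d}$ is identified with the action of a nonzero $u$, and $\mathfrak{sl}_2$ is simple. If there were subtlety in passing from local to global statements in the derived category, a fallback is available. In the modular curve case everything is an honest sheaf, by \cite[Lemma 5.1.2]{Pan22}, so $\mathrm{fib}(\bar{d})$ is a two-term complex of sheaves. Its $H^0$ is then the global sections of $\ker \bar{d}$, on which $\mathfrak{sl}_2(C)$ visibly acts by zero.
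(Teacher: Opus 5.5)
Your proposal is correct and is essentially the paper's own proof: you take the long exact sequence for $\mathrm{fib}(\bar{d})$, kill the third term with Proposition \ref{stpM21}, and transport Lemma \ref{sl2trv} along the resulting $\mathfrak{g}$-equivariant surjection. Your worry about derived-level versus sheaf-level triviality is more caution than needed here: $\mathrm{fib}(\bar{d})$ sits in non-negative degrees, so $H^0(\Fl,\mathrm{fib}(\bar{d}))=\Gamma(\Fl,\mathcal{H}^0(\mathrm{fib}(\bar{d})))$, and triviality of the action on that sheaf (a local statement) already suffices, which is exactly what your fallback uses.
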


\begin{proof}
Consider the exact sequence
\[H^0(\Fl, \mathrm{fib}(\bar{d}))\to H^0(\Fl,\cO^{\la,H=0})\stackrel{H^0(\bar{d})}{\to} H^0(\Fl,  \cO^{\la,H=0}\otimes_{\cO_\Fl} (\mathfrak{g}^0_0/\mathfrak{p}^0_0)^{-1}).\]
The last term vanishes by Proposition \ref{stpM21}, hence the first map is surjective and $\mathfrak{sl}_2(C)$-equivariant. The previous lemma implies the claim.
\end{proof}

\begin{proof}[Proof of Theorem \ref{MT} with $F=\Q$]
We can extend the scalars from $E$ to $C$. By the comparison result in Theorem \ref{thmLP}, it's enough to prove the corresponding result for $H^0(\Fl,\cO^{\la})$. This is a direct corollary of the previous proposition because as a corollary of Proposition \ref{stpM1}
$H^0(\Fl,\cO^{\la})=H^0(\Fl,\cO^{\la,H=0})$.
\end{proof}

\begin{rem}\label{dbark}
In the Hilbert case, we will also encounter $\cO^{\la,H=k}$ for $k\in\Z_{\geq 0}$. Note that $H=0$ on $\cO^{\la,H=k}\otimes_{\cO_\Fl} \cL_{(0,1)}^{-k}$, hence we get $\bar{d}$ on it as in \ref{bard}.
The usual BGG construction gives a $\mathfrak{sl}_2(C)$-equivariant morphism
\[\bar{d}^{k+1}: \cO^{\la,H=k}\to \cO^{\la,H=k}\otimes_{\cO_\Fl} \Omega_{\Fl}^{ k+1}, \]
and this complex is quasi-isomorphic to the complex
\[\bar{d}\otimes 1:\cO^{\la,H=k}\otimes_{\cO_\Fl} \cL_{(0,1)}^{-k}\otimes_C \Sym^{k} \mathrm{std} \to \cO^{\la,H=k}\otimes_{\cO_\Fl} \cL_{(0,1)}^{-k}\otimes \Omega^1_\Fl\otimes_C \Sym^{k} \mathrm{std}.\]
It follows that the action of $U(\mathfrak{sl}_2(C))$ on this complex in the derived category factors through $\End(\Sym^{k} \mathrm{std})$, which is finite-dimensional.
\end{rem}

\subsection{The general Hilbert case}
\begin{para}
We prove Theorem \ref{MT} for general totally real field in this subsection. The rough idea is to establish corresponding results in the previous subsection by induction.  We first need some notations. 

For $\tau\in\Sigma$, denote by $\Omega^1_{\{\tau\}}$ the equivariant line bundle on $\Fl$ associated to the weight of $\mathfrak{n}_\tau$ (explicitly, the weight has $k_\tau=-2$ and $k_{\tau'}=0$ for $\tau'\neq \tau$). As the notation suggests, it is isomorphic to the pullback of the cotangent bundle of $\Fl_\tau$. As in the case of modular curve, it's also isomorphic to the dual of $\mathfrak{g}_\tau^0/\mathfrak{p}^0_\tau$. For $I\subseteq \Sigma$ a subset and $\phi:I\to\Z$ a function, we define
\begin{itemize}
\item $\Omega^{\otimes \phi}_I:=\bigotimes_{\tau\in I}(\Omega^1_{\{\tau\}})^{\otimes \phi(\tau)}$ over $\cO_\Fl$; it is understood as $\cO_\Fl$ if $I$ is empty. 
\item $\Omega^{l}_I:=\bigotimes_{\tau\in I}(\Omega^1_{\{\tau\}})^{\otimes l}$ for $l\in\Z$.
\item $\mathrm{fib}(I,\phi)\in D^b(\cO_{\Fl})$ as the Koszul complex associated to $\{H_\tau-\phi(\tau)\}_{\tau\in I}$ on $\cO^{\la}$ \cite[\href{https://stacks.math.columbia.edu/tag/0623}{Tag 0623}]{stacks-project}. Our convention is that the complex starts at degree zero. For example, if $I=\{\tau\}$, then $\mathrm{fib}(I)$ is the fiber of $\cO^{\la}\stackrel{H_\tau-\phi(\tau)}{\longrightarrow}\cO^{\la}$. In particular, if $\tau'\in I$, we have an exact triangle 
\[\mathrm{fib}(I,\phi)\to \mathrm{fib}(I',\phi|_{I'}) \stackrel{H_{\tau'}-\phi(\tau')}{\longrightarrow} \mathrm{fib}(I',\phi|_{I'})\stackrel{+1}{\to}\] 
for $I'=I\setminus{\tau'}$.
\end{itemize}

Let $\lambda=(\{k_\tau\}_{\tau\in\Sigma};w)$ be a weight with $k_\tau\geq 0$ for all $\tau$. The $\mathfrak{g}$-representation $V_\lambda$ has a quotient of weight $\lambda$ when restricted to $\mathfrak{p}$, cf. Definition \ref{defnpw}. Consider the corresponding sequence of equivariant vector bundles on $\Fl$
\[0\to \mathcal{W}_\lambda \to V_\lambda\otimes_C \cO_\Fl \to \cL_\lambda \to 0\]
and its twist by $\cL_\lambda^{-1}$
\[0\to \mathcal{F}_\lambda \to V_\lambda\otimes_C \cL_\lambda^{-1}\to \cO_\Fl \to 0\]
where $\mathcal{F}_\lambda = \mathcal{W}_\lambda \otimes_{\cO_\Fl} \cL_\lambda^{-1}$.  By considering the weights, one sees that $\mathcal{F}_\lambda$ is a successive extension of line bundles and each such line bundle is isomorphic to $\Omega_I^{\otimes \phi}$ for some non-empty $I\subseteq \Sigma$, and $\phi:I\to \Z$ satisfying $1\leq\phi(\tau)\leq k_\tau$, $\tau\in I$. For example, let $\lambda_1$ be the parallel weight $(1;1)$. Then $V_{\lambda_1}$ is the tensor product of all the standard representations of $\mathfrak{g}_\tau$ and $\mathcal{F}_{\lambda_1}$ is a successive extension of line bundles isomorphic to $\Omega^1_I$ for some non-empty $I\subseteq \Sigma$.

\end{para}

\begin{prop}  \label{stpH2}
$H^{<d}(\Fl,\mathrm{fib}(I,\phi)\otimes_{\cO_{\Fl}} \mathcal{L}_\lambda^{-1})=0$ for any $I\subseteq \Sigma$, $\phi:I\to\Z$, and parallel weight $\lambda=(k;w)$ with $k>0$.
\end{prop}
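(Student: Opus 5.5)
We argue by induction on $|I|$, following the pattern of Proposition \ref{stpM21}. For $I=\emptyset$ the Koszul complex $\mathrm{fib}(\emptyset,\phi)$ is just $\cO^{\la}$. The claim $H^{<d}(\Fl,\cO^{\la}\otimes_{\cO_\Fl}\mathcal{L}_\lambda^{-1})=0$ for parallel $\lambda=(k;w)$ with $k>0$ is then exactly the Kodaira vanishing statement in Theorem \ref{thmLP}(3).

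For the inductive step, pick $\tau'\in I$ and set $I'=I\setminus\{\tau'\}$. Tensor the exact triangle
\[\mathrm{fib}(I,\phi)\to \mathrm{fib}(I',\phi|_{I'}) \xrightarrow{H_{\tau'}-\phi(\tau')} \mathrm{fib}(I',\phi|_{I'})\xrightarrow{+1}\]
with $\mathcal{L}_\lambda^{-1}$ over $\cO_\Fl$. This is legitimate because the horizontal operators $H_\tau$ are $\cO_\Fl$-linear, so the Koszul complex and the triangle live in $D^b(\cO_\Fl)$ and the derived tensor product preserves exact triangles. Taking hypercohomology gives a long exact sequence
\[\cdots\to H^{j-1}(\Fl,\mathrm{fib}(I',\phi|_{I'})\otimes\mathcal{L}_\lambda^{-1})\to H^j(\Fl,\mathrm{fib}(I,\phi)\otimes\mathcal{L}_\lambda^{-1})\to H^j(\Fl,\mathrm{fib}(I',\phi|_{I'})\otimes\mathcal{L}_\lambda^{-1})\to\cdots.\]
For $j<d$, both flanking terms (in degrees $j-1$ and $j$) vanish by the induction hypothesis, so the middle term vanishes as well. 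Since the Koszul complex sits in nonnegative degrees, there is no difficulty in low degrees: for $j=0$ the left term is $H^{-1}=0$.

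The only point needing care is that the horizontal action on $\cO^{\la}\otimes\mathcal{L}_\lambda^{-1}$ agrees with $H_{\tau'}\otimes 1$. This holds by definition: the horizontal action on $\cO^{\la}\otimes\mathcal{W}$ was defined by acting solely on $\cO^{\la}$. Hence no shift by $\lambda(H_{\tau'})$ enters the Koszul complex, and the argument involves no genuine obstacle beyond this bookkeeping.
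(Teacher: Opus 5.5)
Your proof is correct and matches the paper's argument: Kodaira vanishing (Theorem \ref{thmLP}(3)) handles $I=\emptyset$, and the induction on $|I|$ uses the triangle $\mathrm{fib}(I,\phi)\to\mathrm{fib}(I',\phi|_{I'})\to\mathrm{fib}(I',\phi|_{I'})$ tensored with $\mathcal{L}_\lambda^{-1}$. Your remarks about degree $-1$ and about the horizontal action are harmless but not needed, since the induction hypothesis already gives vanishing in every degree $<d$ (negative degrees included) and for every $\phi$.
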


\begin{proof}
This is the Kodaira vanishing in Theorem \ref{thmLP} if $I$ is empty. In general we use the triangle $\mathrm{fib}(I,\phi)\to \mathrm{fib}(I',\phi|_{I'}) \stackrel{H_{\tau'}-\phi(\tau')}{\longrightarrow} \mathrm{fib}(I',\phi|_{I'})\stackrel{+1}{\to}$ and argue by induction on $|I|$.
\end{proof}

\begin{prop} \label{stpM2}
Suppose $J\subseteq I\subseteq \Sigma$ and $\phi_I:I\to\Z$ is a function whose restriction to $J$ takes non-negative  values. Define $\phi:J\to\Z$ as $\phi(\tau)=\phi_I(\tau)+1$. Let $F=\mathrm{fib}(I,\phi_I)$. For $i<d$, there is a finite filtration on $H^i(\Fl,F\otimes \Omega^{\otimes \phi}_J)$ by $(\mathfrak{h}\times\mathfrak{g})$-invariant subspaces, such that
each graded piece is an eigenspace of $H_\tau$ for at least $\min(d-i,d-|J|)$ embeddings $\tau\in\Sigma\setminus J$ with non-negative integer eigenvalues.
\end{prop}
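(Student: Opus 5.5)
The plan is to generalize the argument of Proposition \ref{stpM1}, adding an induction to handle several embeddings. I would argue by descending induction on $i$ (equivalently, induction on $d-i$), and for fixed $i$ by descending induction on $|J|$. When $|J|\geq d$ or $i\geq d$ there is nothing to prove, since $\min(d-i,d-|J|)\leq 0$.

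For the inductive step, take the parallel weight $\lambda_1=(1;1)$ and the exact sequence
\[0\to \mathcal{F}_{\lambda_1}\to V_{\lambda_1}\otimes_C\cL_{\lambda_1}^{-1}\to\cO_\Fl\to 0.\]
Tensor it with $F\otimes\Omega^{\otimes\phi}_J$ and take cohomology. The middle term is $V_{\lambda_1}\otimes_C H^i(\Fl,F\otimes\Omega^{\otimes\phi}_J\otimes\cL_{\lambda_1}^{-1})$. I would like this to vanish in degrees $<d$ by Proposition \ref{stpH2}. Since $\Omega^{\otimes\phi}_J$ is not parallel, I would first absorb it: on $F$ each $H_\tau$ with $\tau\in J$ acts, up to the Koszul filtration, by $\phi_I(\tau)\geq 0$. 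So Remark \ref{dbark} provides a BGG morphism $\bar d^{\phi(\tau)}$ from the $\tau$-part of $F$ to $F\otimes(\Omega^1_{\{\tau\}})^{\phi(\tau)}$, whose fiber has $U(\mathfrak{sl}_{2,\tau})$ acting through a finite-dimensional algebra. Peeling off the $\tau\in J$ one at a time with these triangles relates $H^\bullet(F\otimes\Omega^{\otimes\phi}_J)$ to $H^\bullet(F)$-type groups and to fibers with more constrained $\tau$-behaviour. Applying Proposition \ref{stpH2} to the untwisted pieces, the map
\[H^i(\Fl,F\otimes\Omega^{\otimes\phi}_J)\to H^{i+1}(\Fl,F\otimes\Omega^{\otimes\phi}_J\otimes\mathcal{F}_{\lambda_1})\]
becomes injective, at least on a filtration step. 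The target is filtered by groups $H^{i+1}(\Fl,F\otimes\Omega^{\otimes\phi}_J\otimes\Omega^1_{I'})$ with $\emptyset\neq I'\subseteq\Sigma$. Pulling this filtration back gives the $(\mathfrak{h}\times\mathfrak{g})$-invariant filtration of the statement.

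On each graded piece, compare Casimirs using Theorem \ref{HCas}. The horizontal $H_\tau$ and $\Omega_\tau$ are preserved by the maps. For $\tau\notin J$, $\Omega_\tau$ equals $\tfrac12(H_\tau+1)^2-\tfrac12$ on the source and $\tfrac12(H_\tau-1)^2-\tfrac12$ on the $I'$-piece whenever $\tau\in I'$. Equating these forces $H_\tau=0$ on the corresponding subquotient for every $\tau\in I'\setminus J$. I would then pass to the $H_\tau=0$ part: replace $I$ by $I\cup(I'\setminus J)$ with $\phi_I=0$ there, and $J$ by $J\cup I'$ with $\phi=1$ on the new elements. This lands in the setting of the proposition in degree $i+1$ with a larger $J$. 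The induction hypothesis yields at least $\min(d-i-1,\,d-|J\cup I'|)$ further embeddings outside $J\cup I'$ with non-negative integral eigenvalues. Adding the $|I'\setminus J|$ embeddings already forced to be $0$ gives at least $\min(d-i,d-|J|)$ when $I'\not\subseteq J$.

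I expect two obstacles. The first is the case $I'\subseteq J$, where no new embedding is pinned down by the Casimir comparison. There I would try to use the shifted weights on $J$: on $\Omega^{\otimes\phi}_J\otimes\Omega^1_{I'}$ the exponents go up by one, so the Casimir identity for $\tau\in I'$ is incompatible with the eigenvalue $\phi_I(\tau)$ imposed by $F$. The piece should then contribute zero to the image and could be dropped. The second is bookkeeping: $F$ is a Koszul complex, so the $H_\tau$ act only with generalized eigenvalues. Extracting honest eigenspaces requires refining the filtration further, for example by the Koszul filtration, and checking that it stays $(\mathfrak{h}\times\mathfrak{g})$-invariant and finite.
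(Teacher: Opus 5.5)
There is a genuine gap at the step your argument needs most: injectivity of $H^i(\Fl,F\otimes\Omega^{\otimes\phi}_J)\to H^{i+1}(\Fl,F\otimes\Omega^{\otimes\phi}_J\otimes\mathcal{F}_{\lambda_1})$ when $J\neq\emptyset$.

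\textbf{The twist by $\lambda_1$ does not give vanishing.} The kernel of this map is the image of $V_{\lambda_1}\otimes_C H^i(\Fl,F\otimes\Omega^{\otimes\phi}_J\otimes\cL_{\lambda_1}^{-1})$. The line bundle $\Omega^{\otimes\phi}_J\otimes\cL_{\lambda_1}^{-1}$ has weight $-2\phi(\tau)-1$ at $\tau\in J$ and $-1$ elsewhere. It is therefore not the inverse of a parallel weight, and Proposition \ref{stpH2} says nothing about it. The BGG ``peeling'' cannot repair this, because BGG morphisms commute with the Casimirs. For $\tau\in J$, Theorem \ref{HCas} gives $\Omega_\tau=\frac12(\phi(\tau)+1)^2-\frac12$ on $F\otimes\Omega^{\otimes\phi}_J\otimes\cL_{\lambda_1}^{-1}$. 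On $F\otimes\cL_{\lambda_1}^{-1}$ it gives $\frac12(\phi(\tau)-1)^2-\frac12$, which differs since $\phi(\tau)\geq1$. In the $\tau$-direction, the BGG partner of the former is a twist of weight $+1$ at $\tau$, where no vanishing is available. For $J=\emptyset$ your step is exactly the paper's, but after one step the target already has nonempty $J$.

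\textbf{The missing idea.} Let the auxiliary representation depend on $(J,\phi)$. With $k=\max_J\phi$, take the non-parallel weight $\lambda$ with $k_\tau=2k-2\phi(\tau)$ on $J$, $k_\tau=2k$ off $J$, and $w=0$. Then $\Omega^{\otimes\phi}_J\otimes\cL_\lambda^{-1}\cong\cL_{(2k;0)}^{-1}$, so Proposition \ref{stpH2} kills the middle term. Hence $H^i(\Fl,F\otimes\Omega^{\otimes\phi}_J)$ injects into $H^{i+1}(\Fl,F\otimes\Omega^{\otimes\phi}_J\otimes\mathcal{F}_\lambda)$. Here $\mathcal{F}_\lambda$ is filtered by $\Omega^{\otimes\psi}_{I''}$ with $I''\neq\emptyset$ and $1\leq\psi(\tau)\leq k_\tau$; this is Lemma \ref{ind1}. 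Exponents can now jump by more than one and can grow on $J$, but the Casimir comparison still controls this. Lemma \ref{varlemevH} (your ``first obstacle'') shows that nonzero pieces embed only where the exponents on $J$ are unchanged. Lemma \ref{lemevH} gives $H_\tau=\phi'(\tau)-1\geq0$ for new $\tau$, which is why the statement claims non-negative integer eigenvalues rather than $0$.

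\textbf{A second gap in your induction.} To feed the target back into the proposition in degree $i+1$, you replace $F$ by $\mathrm{fib}(I\cup(I'\setminus J),\dots)$. But you only know $H_\tau=0$ on a subquotient $N_1/N_2$ of $H^{i+1}(\Fl,F\otimes\cdots)$ containing your graded piece. The image of $H^{i+1}(\Fl,\mathrm{fib}(I\cup\{\tau\},\cdot)\otimes\cdots)$, with $\phi_I$ extended by $0$, is exactly $\ker H_\tau$. A subquotient on which $H_\tau$ vanishes need not come from $\ker H_\tau$, unless you already know a polynomial identity for $H_\tau$ on the whole group, which is what is being proved.

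The paper avoids this by never changing $F$. Lemma \ref{ind1} holds for arbitrary $J$ and $\phi:J\to\Z_{\geq1}$, with no relation to $I$, and it is simply iterated. This gives, for each nonzero graded piece $M$, a chain $J\subseteq J_1\subseteq J_2\subseteq\cdots$ in degrees $i+1,i+2,\dots$. Since $M$ stays a subquotient of the original $H^i(\Fl,F\otimes\Omega^{\otimes\phi}_J)$, Lemmas \ref{lemevH} and \ref{varlemevH} force $\phi_j=\phi_{j+1}$ whenever $J_j=J_{j+1}$, contradicting Lemma \ref{ind1}. So the chain is strictly increasing. When it stops, at $\Sigma$ or in degree $d$, Lemma \ref{lemevH} has pinned at least $\min(d-i,d-|J|)$ new embeddings. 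The hypothesis $J\subseteq I$, $\phi=\phi_I+1$ is used only to fix $\Omega_\tau$ for $\tau\in J$.

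Your Koszul worry, by contrast, is harmless: $H_\tau-\phi_I(\tau)$ is null-homotopic on the Koszul complex, so $H_\tau$ acts on $F$ by the scalar $\phi_I(\tau)$.
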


\begin{proof}
The proof will be similar to the proof of Proposition \ref{stpM1}. 


Here is the key technical lemma.

\begin{lem} \label{ind1}
For $i<d$ and subsets $I,J\subseteq \Sigma$ and functions $\phi_I:I\to\Z$, $\phi:J\to\Z_{\geq 1}$, there is a finite $(\mathfrak{h}\times\mathfrak{g})$-invariant  filtration on 
\[H^i(\Fl,F\otimes_{\cO_\Fl}\Omega^{\otimes \phi}_J)\] 
with $F=\mathrm{fib}(I,\phi_I)$, such that each non-zero graded piece can be embedded into a subquotient of 
\[H^{i+1}(\Fl,F\otimes_{\cO_\Fl}\Omega^{\otimes \phi'}_{J\cup J'})\]
as a  $(\mathfrak{h}\times\mathfrak{g})$-module for some  subset $J'$ of $\Sigma$ disjoint from $J$ and $\phi':J\cup J'\to\Z_{\geq 1}$ satisfying that $\phi'(\tau)\geq\phi(\tau)$ for $\tau\in J$ and either 
\begin{itemize}
\item $J'$ is non-empty; or
\item  $\phi'(\tau')>\phi(\tau')$ for some $\tau'\in J$.
\end{itemize}
In short the pair $(J,\phi)\neq (J\cup J',\phi')$.
\end{lem}

We would like to  keep applying this lemma. The following lemma provides a sufficient condition to rule out the second possibility in Lemma \ref{ind1}.

\begin{lem} \label{lemevH}
Same setup as in Lemma \ref{ind1}. Let  $M$ be a subquotient of $H^i(\Fl,F\otimes\Omega^{\otimes \phi}_J)$ as a $(\mathfrak{h}\times\mathfrak{g})$-module that can be embedded into a subquotient of $H^j(\Fl,F\otimes\Omega^{\otimes \phi'}_{J\cup J'})$ for some $j$, $J'$ disjoint from $J$, and $\phi':J\cup J'\to \Z_{\geq 1}$. Then $H_\tau=\phi'(\tau)-1$ on $M$ for $\tau\in J'$.
\end{lem}

\begin{proof}
By Theorem \ref{HCas}, 
\begin{itemize}
\item $\Omega_\tau=\frac{1}{2}(H_\tau+1)^2-\frac{1}{2}$ on $F\otimes\Omega^{\otimes \phi}_J$,
\item $\Omega_\tau=\frac{1}{2}(H_\tau+1-2\phi'(\tau))^2-\frac{1}{2}$ on $F\otimes\Omega^{\otimes \phi'}_{J\cup J'}$.\end{itemize}
Hence $\frac{1}{2}(H_\tau+1)^2-\frac{1}{2}=\frac{1}{2}(H_\tau+1-2\phi'(\tau))^2-\frac{1}{2}$ on $M$. Thus $H_\tau=\phi'(\tau)-1$ on $M$ because $\phi'(\tau)\geq 1$.
\end{proof}

Assuming Lemma \ref{ind1}, we complete the proof of Proposition \ref{stpM2}.  Keep applying Lemma \ref{ind1}. We get a finite $(\mathfrak{h}\times\mathfrak{g})$-invariant filtration on $H^i(\Fl,F\otimes \Omega^{\otimes \phi}_J)$ such that each non-zero graded piece $M$ can be  embedded as a  $(\mathfrak{h}\times\mathfrak{g})$-module  into subquotients of $H^{i+1}(\Fl,F\otimes \Omega_{J_1}^{\otimes \phi_1})$, $H^{i+2}(\Fl,F\otimes \Omega_{J_2}^{\otimes \phi_2})\cdots$ for subsets $J\subseteq J_1\subseteq J_2\subseteq\cdots \subseteq\Sigma$ and $\phi_i:J_i\to\Z_{\geq 1}$. We claim that all the inclusions $J\subseteq J_1 \subseteq J_2 \subseteq \cdots$ are strict. If not, say $J_j=J_{j+1}$ with $J_0$ understood as $J$ and $\phi_0=\phi$. Then for $\tau\in J_j$, we claim that both $\phi_j(\tau)-1$ and $\phi_{j+1}(\tau)-1$ are equal to the eigenvalue of $H_\tau$ on $M$. (Recall that  $M$ is a subquotient of $H^i(\Fl,F\otimes \Omega^{\otimes \phi}_J)$  by our assumption.)
\begin{itemize}
\item If $\tau\notin J$, this is Lemma \ref{lemevH},
\item If $\tau\in J$, this follows from Lemma \ref{varlemevH} below. 
\end{itemize}
Hence $\phi_j=\phi_{j+1}$ which contradicts Lemma \ref{ind1}. Thus the sequence $J\subseteq J_1\subseteq J_2\subseteq\cdots \subseteq\Sigma$ is strict increasing. When it terminates,   $M$ is embedded either into $H^*(\Fl,F\otimes \Omega_{\Sigma}^{\otimes \phi_\Sigma})$ for some $\phi_\Sigma:\Sigma\to\Z_{\geq 1}$ or  into $H^d(\Fl,F\otimes\Omega_{J\cup J'}^{\otimes \phi'})$ for some $J'$ disjoint from $J$ satisfying $|J'|\geq d-i$  and $\phi':J\cup J'\to \Z_{\geq 1}$. In both cases Lemma \ref{lemevH} gives the desired result.
\end{proof}

\begin{lem} \label{varlemevH}
Same setup as in Proposition \ref{stpM2}. Let $J'\subseteq \Sigma$ and $\phi':J'\to\Z$. Let $\tau\in J$. The Casimir operator $\Omega_\tau$ acts via  $\frac{1}{2}(\phi(\tau)+2\phi'(\tau))^2-\frac{1}{2}$ on $F\otimes_{\cO_\Fl}\Omega^{\otimes \phi}_{J}\otimes_{\cO_\Fl}\Omega^{\otimes \phi'}_{J'}$. In particular, $F\otimes\Omega^{\otimes \phi}_{J}$ and $F\otimes\Omega^{\otimes \phi}_{J}\otimes\Omega^{\otimes \phi'}_{J'}$ have different eigenvalues of $\Omega_\tau$  if $\phi'(\tau)>0$.
\end{lem}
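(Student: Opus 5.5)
The plan is to reduce the statement to Theorem \ref{HCas} together with the fact that, on the Koszul complex $F=\mathrm{fib}(I,\phi_I)$, the horizontal operator $H_\tau$ acts by the scalar $\phi_I(\tau)$ for $\tau\in I$. Throughout, extend $\phi'$ by zero to $\tau\notin J'$; then the stated formula for such $\tau$ reads $\frac12\phi(\tau)^2-\frac12$, which is consistent.

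First I identify the weight of the twisting line bundle. By definition $\Omega^1_{\{\tau\}}$ has weight with $k_\tau=-2$ and $k_{\tau'}=0$ for $\tau'\neq\tau$. Hence $\Omega^{\otimes\phi}_J\otimes\Omega^{\otimes\phi'}_{J'}\cong\mathcal{L}_\lambda$ with
\[\lambda(H_\tau)=-2\phi(\tau)-2\phi'(\tau)\]
for our fixed $\tau\in J$.

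Next, $F\otimes\mathcal{L}_\lambda$ is the Koszul complex of $\{H_{\tau'}-\phi_I(\tau')\}_{\tau'\in I}$ acting horizontally on $\cO^{\la}\otimes\mathcal{L}_\lambda$. These horizontal operators commute with $\mathfrak{g}$, so $\Omega_\tau$ acts termwise on this complex. On each term, Theorem \ref{HCas} gives
\[\Omega_\tau=\tfrac12\bigl(H_\tau+1-2\phi(\tau)-2\phi'(\tau)\bigr)^2-\tfrac12 .\]
Now use that $\tau\in J\subseteq I$. On a Koszul complex $K(f_1,\dots,f_n)$, multiplication by each $f_j$ is null-homotopic, via contraction with the $j$-th basis vector. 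So in the derived category $H_\tau=\phi_I(\tau)=\phi(\tau)-1$ on $F\otimes\mathcal{L}_\lambda$. Consequently $\Omega_\tau-c$, where $c=\tfrac12(\phi(\tau)-2\phi(\tau)-2\phi'(\tau))^2-\tfrac12$, factors as a polynomial in $\mathfrak{g}$-commuting operators times $(H_\tau-\phi_I(\tau))$. It is therefore zero on the object. Simplifying, $c=\tfrac12(\phi(\tau)+2\phi'(\tau))^2-\tfrac12$. This is exactly the sense of ``acts via'' needed for the induced action on cohomology groups and their subquotients, which is how the lemma is used in Proposition \ref{stpM2}.

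For the last assertion, $\phi(\tau)=\phi_I(\tau)+1\geq1$ because $\phi_I$ is nonnegative on $J$. If $\phi'(\tau)>0$, then $0<\phi(\tau)<\phi(\tau)+2\phi'(\tau)$, so the squares differ and the two eigenvalues $\tfrac12\phi(\tau)^2-\tfrac12$ and $\tfrac12(\phi(\tau)+2\phi'(\tau))^2-\tfrac12$ are distinct.

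The only point needing care is the derived-category meaning of ``$H_\tau$ acts by $\phi_I(\tau)$''. This is handled by the null-homotopy of Koszul multiplication, which is compatible with the $\mathfrak{g}$-action since $H_\tau$ commutes with $\mathfrak{g}$. Everything else is bookkeeping of weights.
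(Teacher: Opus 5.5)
Your proposal is correct and follows the paper's own argument. You substitute $H_\tau=\phi_I(\tau)=\phi(\tau)-1$ on the Koszul complex $F$ into Theorem \ref{HCas}, with $\lambda(H_\tau)=-2\phi(\tau)-2\phi'(\tau)$. Your null-homotopy justification, showing that $H_\tau-\phi_I(\tau)$ vanishes on $F$ compatibly with the $\mathfrak{g}$-action, spells out a step the paper leaves implicit.
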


\begin{proof}
This is a direct consequence of Theorem \ref{HCas} since
$H_\tau$ acts via $\phi(\tau)-1$ on $F$. 
\end{proof}

\begin{proof}[Proof of Lemma \ref{ind1}]
First assume that $J$ is empty. Consider the tensor product of $F$ and the sequence 
\[0\to \mathcal{F}_{\lambda_1} \to V_{\lambda_1}\otimes_C \cL_{\lambda_1}^{-1}\to \cO_\Fl \to 0\]
and its cohomology. We get an exact sequence (we omit all subscripts $\cO_\Fl$ in tensor products)
\[H^i(\Fl, F\otimes\cL_{\lambda_1}^{-1})\otimes_C V_{\lambda_1}\to H^i(\Fl, F) \to H^{i+1}(\Fl, F\otimes\mathcal{F}_{\lambda_1}). \]
The first term is zero  by Proposition \ref{stpH2}. This means that the second map in the above exact sequence is injective. Since $\mathcal{F}_{\lambda_1}$ is filtered by $\Omega^1_{J'}$ with $J'$ non-empty, we see that $H^i(\Fl, F)$ has a finite $(\mathfrak{h}\times\mathfrak{g})$-invariant  filtration whose graded pieces can be  embedded into a subquotient of $H^{i+1}(\Fl,F\otimes \Omega^1_{J'})$ for some non-empty $J'$. 

Next assume that $J$ is non-empty.
Set $k=\max_{\tau\in J}\phi(\tau)$ and $\lambda=(\{k_\tau\}_{\tau\in\Sigma};0)$ with $k_\tau=2k-2\phi(\tau)$ if $\tau\in J$ and $k_\tau=2k$ otherwise.
Consider the tensor product of $F\otimes_{\cO_\Fl}\Omega^{\otimes \phi}_J$ and the sequence 
\[0\to \mathcal{F}_\lambda \to V_\lambda \otimes_C \cL_\lambda^{-1}\to \cO_\Fl \to 0\]
and its cohomology. We get an exact sequence 
\[H^i(\Fl, F\otimes\Omega^{\otimes \phi}_J\otimes\cL_{\lambda}^{-1})\otimes_C V_\lambda \to H^i(\Fl, F\otimes\Omega^{\otimes \phi}_J) \to H^{i+1}(\Fl, F\otimes\Omega^{\otimes \phi}_J\otimes\mathcal{F}_\lambda). \]
Note that  $(\Omega^{\otimes \phi}_J\otimes\cL_{\lambda}^{-1})^{-1}$ is of parallel weight $(2k;0)$. Hence $H^i(\Fl, F\otimes\Omega^{\otimes \phi}_J\otimes\cL_{\lambda}^{-1})=0$ by Proposition \ref{stpH2} and the second map in the above exact sequence is injective.
We complete the proof by noting that $\mathcal{F}_\lambda$ is either $0$ or a successive extension of line bundles isomorphic to $\Omega_I^{\otimes \phi}$ for some non-empty $I\subseteq \Sigma$, and $\phi:I\to \Z_{\geq 1}$.
\end{proof}

\begin{prop} \label{propSigmavan}
Suppose $\phi:\Sigma\to\Z_{\geq 1}$. For $i<d$, 
\[H^i(\Fl,\mathrm{fib}(\Sigma,\phi-1)\otimes_{\cO_\Fl} \Omega^{\otimes \phi}_\Sigma)=0.\] 
\end{prop}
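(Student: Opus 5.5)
Let $F=\mathrm{fib}(\Sigma,\phi-1)$, so each $H_\tau$ acts on $F$ with eigenvalue $\phi(\tau)-1\geq 0$ (in the Koszul sense). The plan is to run the argument of Lemma \ref{ind1} once more, now with $J=\Sigma$, and observe that no target is left: every graded piece would have to embed into a cohomology group with strictly different Casimir eigenvalues, which forces it to vanish.

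First, take $J=\Sigma$ and $k=\max_\tau\phi(\tau)$. Define $\lambda=(\{k_\tau\};0)$ with $k_\tau=2k-2\phi(\tau)\geq 0$, so that $(\Omega_\Sigma^{\otimes\phi}\otimes\cL_\lambda^{-1})^{-1}$ has parallel weight $(2k;0)$ with $2k>0$. Tensor $F\otimes\Omega^{\otimes\phi}_\Sigma$ with the sequence
\[0\to \mathcal{F}_\lambda\to V_\lambda\otimes_C\cL_\lambda^{-1}\to\cO_\Fl\to 0\]
and take cohomology. Proposition \ref{stpH2} kills $H^i(\Fl,F\otimes\Omega^{\otimes\phi}_\Sigma\otimes\cL_\lambda^{-1})$ for $i<d$. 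Hence $H^i(\Fl,F\otimes\Omega^{\otimes\phi}_\Sigma)$ injects $(\mathfrak{h}\times\mathfrak{g})$-equivariantly into $H^{i+1}(\Fl,F\otimes\Omega^{\otimes\phi}_\Sigma\otimes\mathcal{F}_\lambda)$.

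If $\mathcal{F}_\lambda=0$, which happens when all $\phi(\tau)$ are equal so that $\lambda$ is trivial up to the center, then $V_\lambda\otimes\cL_\lambda^{-1}\cong\cO_\Fl$. In that case $H^i(\Fl,F\otimes\Omega^{\otimes\phi}_\Sigma)$ equals the vanishing group itself, and we are done. Otherwise $\mathcal{F}_\lambda$ is filtered by line bundles $\Omega^{\otimes\phi'}_{J'}$ with $J'$ nonempty and $\phi'\geq1$ on $J'$. So $H^i$ acquires a finite invariant filtration whose graded pieces embed into subquotients of $H^{i+1}(\Fl,F\otimes\Omega^{\otimes\phi}_\Sigma\otimes\Omega^{\otimes\phi'}_{J'})$.

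Now apply Lemma \ref{varlemevH} with $J=\Sigma$. Pick any $\tau\in J'$, so that $\phi'(\tau)>0$. Then $\Omega_\tau$ acts by $\frac12\phi(\tau)^2-\frac12$ on the source and by $\frac12(\phi(\tau)+2\phi'(\tau))^2-\frac12$ on the target. These values differ because $\phi(\tau)\geq1$ and $\phi'(\tau)>0$.

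The main care point is that these are eigenvalue statements in the derived sense: on the Koszul complex $F$, the operators $H_\tau-(\phi(\tau)-1)$ act nilpotently rather than by zero. So $\Omega_\tau$ minus the scalar is nilpotent on each cohomology group. A module $M$ that is a subquotient of the source and embeds into a subquotient of the target must then have $\Omega_\tau-c_1$ and $\Omega_\tau-c_2$ both nilpotent on it with $c_1\neq c_2$. This forces $M=0$, because $\Omega_\tau-c_1=(\Omega_\tau-c_2)+(c_2-c_1)$ would be both nilpotent and invertible. Every graded piece therefore vanishes, and so does $H^i$.
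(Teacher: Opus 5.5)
Your proof is correct, but it takes a different route from the paper's. The paper never passes to degree $i+1$. With $\lambda_{2k}=(2k;0)$ and the same $\lambda$ as yours, it considers $F\otimes_{\cO_\Fl}\cL_{\lambda_{2k}}^{-1}\otimes_C V_\lambda$, which has no cohomology below degree $d$ by Proposition \ref{stpH2}, since $V_\lambda$ is a constant factor. It filters this object by pieces $F\otimes\Omega^{\otimes\phi'}_\Sigma$ with $\phi(\tau)\le\phi'(\tau)\le 2k-\phi(\tau)$. By Lemma \ref{varlemevH}, the tuple of $\Omega_\tau$-eigenvalues of $F\otimes\Omega^{\otimes\phi}_\Sigma$ occurs in exactly one piece. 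So that piece splits off as a direct summand and inherits the vanishing; this is the translation-functor projection sketched in the introduction.

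You instead use $0\to\mathcal{F}_\lambda\to V_\lambda\otimes_C\cL_\lambda^{-1}\to\cO_\Fl\to 0$, i.e.\ Lemma \ref{ind1} with $J=\Sigma$, and kill every graded piece by a Casimir mismatch at a single $\tau\in J'$. This is exactly the mechanism the paper uses to exclude $J_j=J_{j+1}$ in the proof of Proposition \ref{stpM2}. So your argument shows the proposition is a formal consequence of Lemmas \ref{ind1} and \ref{varlemevH}.

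Each route has its advantages:
\begin{itemize}
\item Your route keeps all eigenvalue bookkeeping on cohomology modules, where your ``nilpotent and invertible'' argument is elementary. It also needs separation of eigenvalues at only one $\tau$.
\item The paper's splitting implicitly uses a generalized-eigenspace decomposition of an object of the derived category. That requires a polynomial identity for the $\Omega_\tau$ on the object itself, plus idempotent completeness. In exchange, it stays in degree $i$ and gives the stronger statement that $F\otimes\Omega^{\otimes\phi}_\Sigma$ is a summand of an object with no cohomology below degree $d$.
\end{itemize}
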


\begin{proof}
Let $F=\mathrm{fib}(\Sigma,\phi-1)$ and $k=\max_{\tau\in \Sigma}\phi(\tau)$. Let $\lambda$ be the weight with $k_\tau=2k-2\phi(\tau)$ and $w=0$, and $\lambda_{2k}$ be the parallel weight $(2k;0)$. Consider $F\otimes_{\cO_\Fl}\cL_{\lambda_{2k}}^{-1}\otimes_C V_\lambda$. It has a finite filtration whose filtered pieces are of the form $F\otimes\Omega_{\Sigma}^{\otimes \phi'}$, where $\phi':\Sigma\to \Z$ satisfies $\phi'(\tau)\in \{\phi(\tau),\phi(\tau)+1,\cdots,2k-\phi(\tau)\}$.  Lemma \ref{varlemevH} implies that the common eigenvalues of $\Omega_\tau$, $\tau\in\Sigma$ on $F\otimes  \Omega^{\otimes \phi}_\Sigma$ are distinct from the eigenvalues on other factors. Therefore $F\otimes  \Omega^{\otimes \phi}_\Sigma$ is a direct summand of $F\otimes_{\cO_\Fl}\cL_{\lambda_{2k}}^{-1}\otimes_C V_\lambda$. Since $F\otimes_{\cO_\Fl}\cL_{\lambda_{2k}}^{-1}\otimes_C V_\lambda$ has no cohomology below degree $d$ by Proposition \ref{stpH2}, the same holds for $F\otimes  \Omega^{\otimes \phi}_\Sigma$.
\end{proof}

\begin{cor} \label{corSigmavan}
Let $J$ be a subset of $\Sigma$. Suppose $\phi_\Sigma:\Sigma\to\Z$ is a function with non-negative values on $J$. Let $\phi:J\to \Z_{\geq 1}$ be $\phi(\tau)=\phi_\Sigma(\tau)+1$ and $F=\mathrm{fib}(\Sigma,\phi_\Sigma)$. For $i<|J|$,
\[H^i(\Fl,F\otimes_{\cO_\Fl} \Omega^{\otimes \phi}_J)=0.\] 
\end{cor}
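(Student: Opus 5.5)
The plan is to run the iteration from the proof of Proposition \ref{stpM2}, with $I=\Sigma$ and $\phi_I=\phi_\Sigma$. Since $F=\mathrm{fib}(\Sigma,\phi_\Sigma)$, every eigenvalue can then be pinned down. At the end, the vanishing of Proposition \ref{propSigmavan} kills the final target.

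First I record that each horizontal operator $H_\tau-\phi_\Sigma(\tau)$, for $\tau\in\Sigma$, acts on the Koszul complex $F$ null-homotopically. This is the standard fact that multiplication by $x$ on the Koszul complex $K(x)$ is homotopic to zero. Consequently $H_\tau$ acts by $\phi_\Sigma(\tau)$ on every $(\mathfrak{h}\times\mathfrak{g})$-subquotient of $H^j(\Fl,F\otimes\mathcal{W})$, for any equivariant line bundle $\mathcal{W}$, and for every $\tau\in\Sigma$.

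Next, fix $i<|J|$ and take a non-zero graded piece $M$ of $H^i(\Fl,F\otimes\Omega^{\otimes\phi}_J)$. I apply Lemma \ref{ind1} repeatedly, exactly as in the proof of Proposition \ref{stpM2}. This produces a chain $J=J_0\subseteq J_1\subseteq\cdots$ and functions $\phi_k:J_k\to\Z_{\geq1}$ such that $M$ embeds into a subquotient of $H^{i+k}(\Fl,F\otimes\Omega^{\otimes\phi_k}_{J_k})$. The two eigenvalue constraints are as follows.

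\begin{enumerate}
\item For $\tau\in J_k\setminus J$, Lemma \ref{lemevH} gives $H_\tau=\phi_k(\tau)-1$ on $M$, while the first step gives $H_\tau=\phi_\Sigma(\tau)$. Hence $\phi_k(\tau)=\phi_\Sigma(\tau)+1$. Since $\phi_k(\tau)\geq1$, this already forces $M=0$ unless $\phi_\Sigma(\tau)\geq0$.
\item For $\tau\in J$, Lemma \ref{varlemevH} (via the Casimir eigenvalue) forces $\phi_k(\tau)=\phi(\tau)=\phi_\Sigma(\tau)+1$.
\end{enumerate}

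In particular $\phi_k=(\phi_\Sigma+1)|_{J_k}$ is determined. Since Lemma \ref{ind1} guarantees $(J_k,\phi_k)\neq(J_{k+1},\phi_{k+1})$, the inclusions $J_k\subsetneq J_{k+1}$ must be strict, and so $|J_k|\geq|J|+k$.

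The remaining check, which is the main bookkeeping point, is that Lemma \ref{ind1} stays applicable, i.e.\ that the cohomological degree $i+k$ remains below $d$ until $J_k=\Sigma$. This follows from
\[
i+k<|J|+k\leq|J_k|\leq d .
\]
So the process continues until some $J_k=\Sigma$, reached with $i+k<d$. At that stage $M$ embeds into a subquotient of
\[
H^{i+k}\bigl(\Fl,\mathrm{fib}(\Sigma,\phi_\Sigma)\otimes\Omega^{\otimes(\phi_\Sigma+1)}_\Sigma\bigr),
\]
with $\phi_\Sigma+1\geq1$ everywhere (otherwise $M=0$ already, by the first constraint). Proposition \ref{propSigmavan}, applied to $\phi_\Sigma+1$, says this group vanishes. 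Hence $M=0$, and therefore $H^i(\Fl,F\otimes\Omega^{\otimes\phi}_J)=0$.
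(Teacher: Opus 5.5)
Your proposal is correct and is essentially the paper's argument. The paper runs backward induction on $|J|$ with Proposition \ref{propSigmavan} as the base case $J=\Sigma$: it applies Lemma \ref{ind1} once, uses Lemmas \ref{lemevH} and \ref{varlemevH} to force $J'\neq\emptyset$ and $\phi'=\phi_\Sigma+1$ on $J\cup J'$, and then invokes the induction hypothesis. You have unrolled that induction into an explicit iteration, with $M$ understood as a graded piece of the refined filtration, and your bookkeeping $i+k<|J_k|\le d$ is exactly the paper's condition $i+1<|J\cup J'|$.
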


\begin{proof}
We argue by backward induction on $|J|$. The base case with $J=\Sigma$ is Proposition \ref{propSigmavan}. In general, note that $H_\tau$ acts on $F\otimes_{\cO_\Fl} \Omega^{\otimes \phi}_J$ as the scalar $\phi_\Sigma(\tau)$ for $\tau\in\Sigma\setminus J$. By Lemma \ref{ind1}, we can find a finite filtration on $H^i(\Fl,F\otimes_{\cO_\Fl} \Omega^{\otimes \phi}_J)$ whose non-zero graded pieces can be embedded into a subquotient of $H^{i+1}(\Fl,F\otimes_{\cO_\Fl}\Omega^{\otimes \phi'}_{J\cup J'})$ for some $J'$ and $\phi':J\cup J'\to\Z_{\geq 1}$. By exactly the same argument as in the proof of Proposition \ref{stpM2} (using Lemma \ref{lemevH} and Lemma \ref{varlemevH}), 
 we see that every $J'$ and $\phi'$ showing up here satisfy that 
\begin{enumerate}
\item $J'$ is non-empty;
\item $\phi'(\tau)=\phi_\Sigma(\tau)+1$ for $\tau\in J\cup J'$.
\end{enumerate}
Thus $H^{i+1}(\Fl,F\otimes_{\cO_\Fl}\Omega^{\otimes \phi'}_{J\cup J'})=0$  by the induction hypothesis and proves the induction step.
\end{proof}

\begin{defn}
For $i\geq 0$, we say a $\mathfrak{g}$-module satisfies \textit{(i-SL)} if it has a finite filtration by $\mathfrak{g}$-submodules and each graded piece is annihilated by an ideal of finite codimension in $U(\mathfrak{sl}_{2,\tau})$ for at least $(d-i)$ $\tau$'s. This property holds when passing to a $\mathfrak{g}$-subquotient.
\end{defn}

\begin{rem} \label{remfincodid}
For $k\geq 0$, we let $I_k=\ker\left(U(\mathfrak{sl}_{2}(C))\to \End(\Sym^{k} \mathrm{std})\right)$. It's easy to see that an ideal of finite codimension in $U(\mathfrak{sl}_{2}(C))$ must contain $I_0^N I_1^N \cdots I_N^N$ for some $N$. 
\end{rem}

\begin{prop}
Suppose $J\subseteq I\subseteq \Sigma$ and $\phi_I:I\to\Z_{\geq 0}$. Define $\phi:J\to\Z_{\geq 1}$ as $\phi(\tau)=\phi_I(\tau)+1$. Let $F=\mathrm{fib}(I,\phi_I)$. Then $H^{i}(\Fl,F\otimes_{\cO_\Fl}\Omega^\phi_J)$ satisfies \textit{(i-SL)}.
\end{prop}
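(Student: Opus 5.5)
We prove the statement by descending induction on $|I|$, and for fixed $I$ by descending induction on $|J|$. The model is the modular curve case: Propositions \ref{stpM1}, \ref{stpM21}, \ref{stpM3}, and Remark \ref{dbark} for $\cO^{\la,H=k}$.

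\emph{Step 1: reduce to $I=\Sigma$ by adding embeddings with fixed eigenvalues.} If $I\neq\Sigma$, apply Proposition \ref{stpM2} to $H^i(\Fl,F\otimes\Omega^{\phi}_J)$. This gives a finite $(\mathfrak{h}\times\mathfrak{g})$-invariant filtration. On each graded piece $M$, there is a set $T\subseteq\Sigma\setminus J$ of at least $\min(d-i,d-|J|)$ embeddings such that $H_\tau$ acts on $M$ by a scalar $n_\tau\in\Z_{\geq 0}$ for $\tau\in T$.

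Take $\tau\in T\setminus I$. The triangle $\mathrm{fib}(I\cup\{\tau\},\phi_I\cup n_\tau)\to F\stackrel{H_\tau-n_\tau}{\to}F$ gives a long exact sequence. Since $H_\tau-n_\tau$ kills $M$, we can realize $M$ as a subquotient of $H^{i}$ or $H^{i-1}$ of $\mathrm{fib}(I\cup\{\tau\},\cdot)\otimes\Omega^\phi_J$. If $H_\tau$ is only generalized-scalar, we first refine the filtration by powers of $H_\tau-n_\tau$, using that the filtration is finite. In lower cohomological degree, (i-SL) is a weaker requirement, and \textit{(i-SL)} passes to subquotients. Iterating, we reduce to the case $T\subseteq I$, and ultimately to $I\supseteq T$ with $\phi_I\geq 0$ on those $\tau$.

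\emph{Step 2: on an embedding with fixed eigenvalue, $U(\mathfrak{sl}_{2,\tau})$ acts through a finite-dimensional algebra.} Suppose $\tau\in I\setminus J$ with $\phi_I(\tau)=k\geq 0$. We imitate Proposition \ref{stpM3} and Remark \ref{dbark}. Use the BGG morphism
\[\bar d_\tau^{k+1}: F\otimes\Omega^\phi_J\to F\otimes\Omega^\phi_J\otimes(\Omega^1_{\{\tau\}})^{k+1}.\]
The target is $F\otimes\Omega^{\phi'}_{J\cup\{\tau\}}$ with $\phi'(\tau)=k+1=\phi_I(\tau)+1$, which is exactly the shape allowed in the statement, with $J$ replaced by $J\cup\{\tau\}$.

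By Remark \ref{dbark}, $U(\mathfrak{sl}_{2,\tau})$ acts on the fiber of $\bar d_\tau^{k+1}$ through $\End(\Sym^k\mathrm{std})$, i.e.\ it is killed by $I_k$. The long exact sequence then gives
\[H^i(\mathrm{fib})\to H^i(F\otimes\Omega^\phi_J)\to H^i(F\otimes\Omega^{\phi'}_{J\cup\{\tau\}}).\]
The image of the first map is killed by $I_k$ for $\tau$, and it is $\mathfrak{g}$-equivariant. The cokernel embeds into $H^i(F\otimes\Omega^{\phi'}_{J\cup\{\tau\}})$, which satisfies the required property by the induction hypothesis on $|J|$.

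\emph{Step 3: count embeddings and handle the base cases.} We must make sure the ideal-killing happens for at least $d-i$ embeddings on each graded piece, simultaneously. The plan is to apply the BGG step successively for all $\tau\in T$, refining the filtration each time. On a graded piece that survives $m$ steps without being trapped in a fiber, the piece embeds into $H^i(F\otimes\Omega_{J\cup T_0})$ with $|J\cup T_0|$ large.

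Corollary \ref{corSigmavan}, or its analogue for general $I$ obtained through the reduction in Step 1, gives vanishing when $i<|J\cup T_0|$. Pieces that do fall into a fiber are killed by ideals of finite codimension for the embeddings already processed. Combining these ideals for different $\tau$ relies on Remark \ref{remfincodid} and the fact that the actions of different $\mathfrak{sl}_{2,\tau}$ commute.

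\emph{Main obstacle.} The hardest step is the bookkeeping of the count: we must ensure at least $d-i$ embeddings, rather than $\min(d-i,d-|J|)$. For the embeddings in $J$, $\Omega_\tau$ already acts by a scalar (Lemma \ref{varlemevH}), so $U(\mathfrak{sl}_{2,\tau})$ acts through a finite-dimensional quotient on anything killed by a power of $\Omega_\tau-c$. That fails in general, since a Casimir eigenvalue alone does not give finite codimension. I would therefore try instead to show that pieces whose count falls short vanish, using Corollary \ref{corSigmavan}.
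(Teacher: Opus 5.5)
\textbf{There is a genuine gap: the count of $d-i$ embeddings is never secured.}

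You have the right ingredients: enlarging $I$ via Proposition \ref{stpM2}, the BGG maps $\bar d^{k_\tau}_\tau$ whose fibers are killed by $\ker(U(\mathfrak{sl}_{2,\tau})\to\End(\Sym^{k_\tau-1}\mathrm{std}))$, and Corollary \ref{corSigmavan}. But, as your last paragraph concedes, you never secure the count, and the mechanism sketched in Step 3 does not deliver it.

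Suppose you compose BGG maps, so that a surviving piece lands in $H^i(F\otimes\Omega_{J\cup T_0})$. Then a piece caught in the fiber at the $m$-th stage is only known to be killed by the ideal for $\tau_m$. On the image of $\bar d_{\tau_1}$ you a priori control only the Casimir $\Omega_{\tau_1}$, which, as you note yourself, does not give finite codimension. So your claim that trapped pieces are killed by the ideals of all processed embeddings fails for that scheme.

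The missing step is to apply all $\bar d^{k_\tau}_\tau$, $\tau\in I\setminus J$, in parallel from the same source, via
\[H^i(\Fl,F\otimes\Omega^\phi_J)\longrightarrow\bigoplus_{\tau\in I\setminus J}H^i(\Fl,F\otimes\Omega^\phi_J\otimes\Omega^{k_\tau}_{\{\tau\}}).\]
The kernel is killed by all $|I\setminus J|$ ideals simultaneously. The image satisfies \textit{(i-SL)} directly by the induction hypothesis on $|J|$, since each target has exactly the shape of the statement with $J\cup\{\tau\}$. No vanishing and no iteration are needed, and this settles the case $|I|-|J|\geq d-i$.

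\textbf{The missing organizing principle.} The induction must be organized around the global dichotomy $|I|-|J|\geq d-i$ versus $|I|-|J|<d-i$, which your proposal lacks. In the second case there are two sub-cases.
\begin{itemize}
\item If $I=\Sigma$, then $i<|J|$, and Corollary \ref{corSigmavan} gives vanishing outright. This is the only place your vanishing idea is needed.
\item If $I\neq\Sigma$, then $|I\setminus J|<\min(d-i,d-|J|)$. So every graded piece in Proposition \ref{stpM2} has some $\tau\notin I$ with non-negative integral $H_\tau$-eigenvalue. Hence all of $H^i(\Fl,F\otimes\Omega^\phi_J)$ is killed by the single operator $\Phi=\prod_{\tau\notin I}\prod_{j=0}^N(H_\tau-j)^N$. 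Therefore $H^i(\Fl,\mathrm{fib}(\Phi)\otimes\Omega^\phi_J)$ surjects onto it, and $\mathrm{fib}(\Phi)$ is filtered, by the octahedral axiom, by the $\mathrm{fib}(I\cup\{\tau\},\cdot)$, to which the induction applies.
\end{itemize}

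\textbf{Repairing Step 1.} This global $\Phi$ is also what your Step 1 needs. Working graded piece by graded piece with a chosen $\tau$ is not justified: a piece on which $H_\tau=n_\tau$ need not be a subquotient of $\ker(H_\tau-n_\tau)$ on the whole cohomology when $H_\tau$ is not locally finite on the other pieces. Also, $H^{i-1}$ plays no role here, because $\ker(H_\tau-n_\tau)$ on $H^i$ is already a quotient of $H^i$ of the fiber.
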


\begin{proof}
We may assume $i<d$. We will do backward induction on $|I|+|J|$. The rough idea is as follows. By applying Lemma \ref{ind1} enough times,  one may assume $|I|-|J|\geq d-i$. Then one can show that a vector in $H^{i}(\Fl,F\otimes_{\cO_\Fl}\Omega^\phi_J)$ is either killed by a finite-codimensional ideal of $U(\mathfrak{sl}_{2,\tau})$ for $\tau \in I\setminus J$ or one can apply the induction hypothesis. 

The base case with $J=I=\Sigma$ is clear by Proposition \ref{propSigmavan}. Assume the proposition is proved for $|I|+|J|\geq k+1$ and consider the case when $|I|+|J|=k$. Suppose that  $d-i>|I|-|J|$. We separate two cases:
\begin{enumerate}
\item If $I=\Sigma$, this follows from Corollary \ref{corSigmavan}.
\item If $I\neq \Sigma$, then $|I|-|J| < \min( d-i, d-|J|)$. For each graded piece in Proposition \ref{stpM2}, there exists $\tau\notin I$ such that $H_\tau$ acts on it by a non-negative integer. Hence $H^{i}(\Fl,F\otimes_{\cO_\Fl}\Omega^\phi_J)$ is annihilated by 
\[\Phi:=\prod_{\tau\notin I}H_\tau^N(H_\tau-1)^N\cdots (H_\tau-N)^N\]
for some sufficiently large $N$. Hence if we let $\mathrm{fib}(\Phi)$ be the fiber of the morphism $\Phi: F\to F$, the natural map
\[H^i(\Fl, \mathrm{fib}(\Phi)\otimes \Omega^\phi_J)\to H^i(\Fl, F\otimes_{\cO_\Fl}\Omega^\phi_J)\]
is surjective. Thus it's enough to show that $H^i(\Fl, \mathrm{fib}(\Phi)\otimes\Omega^\phi_J)$ satisfies \textit{(i-SL)}. This follows from the induction hypothesis because by the octahedron axiom of derived category, $H^i(\Fl, \mathrm{fib}(\Phi)\otimes\Omega^\phi_J)$ is filtered by subquotients of $H^i(\Fl, \mathrm{fib}(I',\phi_{I'})\otimes\Omega^\phi_J)$ for $I'=I\cup\{\tau\}$ containing $I$ strictly and $\phi_{I'}:I'\to\Z_{\geq 0}$ extending $\phi_I$.
\end{enumerate}

We may assume $|I|-|J|\geq d-i$ from now on. For each $\tau\in I\setminus J$, let $k_\tau=\phi_I(\tau)+1$. As in \ref{bard} and \ref{dbark}, the ($k_\tau$-th iteration of) action of $\cO_{\Fl}\otimes_C\mathfrak{sl}_{2,\tau}\subseteq \mathfrak{g}_\tau^0$ on $F$ induces  a morphism 
\[\bar{d}_\tau^{k_\tau}: F\otimes_{\cO_\Fl}\Omega^\phi_J \to F\otimes_{\cO_\Fl}\Omega^{k_\tau}_{\{\tau\}}  \otimes_{\cO_\Fl}\Omega^\phi_{J}, \]
and $\mathrm{fib}(\bar{d}_\tau^{k_\tau})$ is annihilated by the ideal $\ker\left(U(\mathfrak{sl}_{2,\tau}(C))\to \End(\Sym^{k_\tau-1} \mathrm{std})\right)$.
 Consider the sum of all such $H^i(\bar{d}^{k_\tau}_\tau)$'s:
\[H^i(\Fl, F\otimes_{\cO_\Fl}\Omega^\phi_{J})\stackrel{H^i(\bar{d}^{k_\tau}_\tau)}{\longrightarrow} \bigoplus_{\tau\in I\setminus J} H^i(\Fl, F\otimes_{\cO_\Fl}\Omega^\phi_{J}\otimes \Omega^{k_\tau}_{\{\tau\}}).\]
By our induction hypothesis, all $H^i(\Fl, F\otimes_{\cO_\Fl}\Omega^\phi_{J}\otimes \Omega^{k_\tau}_{\{\tau\}})$ satisfy \textit{(i-SL)}. On the other hand, the kernel is annihilated by all  $\ker\left(U(\mathfrak{sl}_{2,\tau}(C))\to \End(\Sym^{k_\tau-1} \mathrm{std})\right)$, $\tau\in I\setminus J$, hence also satisfies  \textit{(i-SL)} because $|I\setminus J|\geq d-i$. This finishes the induction step.
\end{proof}

\begin{proof}[Proof of Theorem \ref{MT}]
By Theorem \ref{thmLP}, $\tilde{H}^i(K^p)_{E}^{\la}\widehat\otimes_E C\cong H^i(\Fl,\cO^{\la})$ which satisfies  \textit{(i-SL)} by the previous proposition with both $I$ and $J$ being empty.
\end{proof}

\section{Applications}
\subsection{Dimension Theory}
\begin{para}
We first recall the dimension theory for admissible $p$-adic representations and will
discuss several applications of the main results in the next subsection. Our reference is \cite{ST03}, \cite[Section 2, 3]{DPS23}.

If $A$ is an associative, unital and Noetherian (not necessarily commutative) ring and $M$ a finitely generated $A$-module, we denote by $E^q_A(M)=\Ext^q_A(M,A)$ and 
\[j_A(M)=\inf \{q| E^q_A(M)\neq 0\}\in \Z_{\geq 0}\cup \{\infty\}\]
the \textit{grade} of $M$. Clearly if there is a short exact sequence of finitely generated $A$-modules $0\to M'\to M\to M''\to 0$, then
\begin{eqnarray} \label{jadd}
j_A(M)\geq \min(j_A(M'),j_A(M'')).
\end{eqnarray}
This is an equality if $A$ is Auslander-Gorenstein \cite[Proposition 3.2]{DPS23}, which will be the case in our application. If $B$ is another associative, unital and Noetherian ring and there is a flat unital ring homomorphism $\phi: A\to B$, then 
\[j_B(M\otimes_{A,\phi} B) \geq j_A(M).\]
Indeed note that there is a natural right $A$-module structure on $E^q_A(M)$. By taking a projective resolution of $M$, it's easy to see that $E^q_{B}(M\otimes_{A,\phi} B)\cong E^q_A(M)\otimes_{A,\phi} B$.

If $K$ is a compact $p$-adic Lie group, we denote its Iwasawa algebra by $\Z_p[[K]]$ and denote $\F_p[[K]]:=\Z_p[[K]]/(p), \Q_p[[K]]:=\Z_p[[K]]\otimes_{\Z_p}\Q_p$. Moreover if $K$ is uniform pro-$p$, for $r_n=p^{-1/p^n}$, $n\geq 0$, we denote by $D(K,\Q_p)$ its distribution algebra and
$D_{r_n}(K,\Q_p)$ the $r_n$-distribution algebra, cf. \cite[Section 4]{ST03}.  These rings are Auslander regular when $n\geq 1$ by \cite[Theorem 8.9]{ST03}. In particular they are Auslander-Gorenstein.

If $X$ is a topological $\Q_p$-vector space, we let $X^*$ be its topological $\Q_p$-dual. 
\end{para}

\begin{defn} 
Let $G$ be a locally analytic $\Q_p$-analytic group.
\begin{enumerate}
\item For an admissible Banach representation $\Pi$ of $G$ over $\Q_p$, we set
\[d(\Pi)=\dim K-j_{\Q_p[[K]]}(\Pi^*),\]
where  $K$ is an open compact subgroup of $G$. This number $d(\Pi)$
 is independent of $K$. (To see this, suppose $K'\subseteq K$ is an open normal subgroup one can apply  \cite[Lemma 8.8]{ST03} to $\Q_p[[K']]\subseteq \Q_p[[K]]$.)
\item For an admissible locally analytic representation $\Pi$ of $G$ over $\Q_p$, we set
\[d(\Pi)=\dim K- \min_n j_{D_{r_n}(K,\Q_p)}(D_{r_n}(K,\Q_p)\otimes_{D(K,\Q_p)}\Pi^*)\]
where  $K$ is a uniform pro-p open subgroup of $G$ (whose existence is a classical result of Lazard). Again $d(\Pi)$ is independent of $K$. See \cite[Remark 8.10]{ST03}.
\end{enumerate}
\end{defn}

\begin{lem} \label{comBladim}
 $d(\Pi)=d(\Pi^{\la})$ if $\Pi$ is an admissible Banach representation $\Pi$ of $G$ over $\Q_p$.
\end{lem}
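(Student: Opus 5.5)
We want to show $d(\Pi)=d(\Pi^{\la})$. We fix a uniform pro-$p$ open subgroup $K\subseteq G$ and compare the grades directly. The key input is the result of Schneider--Teitelbaum on the locally analytic vectors of an admissible Banach representation: if $\Pi$ is admissible Banach, then $M:=\Pi^*$ is a finitely generated $\Q_p[[K]]$-module, and $(\Pi^{\la})^*\cong D(K,\Q_p)\otimes_{\Q_p[[K]]}M$. Consequently
\[D_{r_n}(K,\Q_p)\otimes_{D(K,\Q_p)}(\Pi^{\la})^*\cong D_{r_n}(K,\Q_p)\otimes_{\Q_p[[K]]}M.\]
This reduces the lemma to showing that
\[j_{\Q_p[[K]]}(M)=\min_n j_{D_{r_n}(K,\Q_p)}\bigl(D_{r_n}(K,\Q_p)\otimes_{\Q_p[[K]]}M\bigr).\]

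For the inequality $\geq$ on the right-hand side (i.e.\ $d(\Pi^{\la})\leq d(\Pi)$), we use flatness. The maps $\Q_p[[K]]\to D(K,\Q_p)$ and $\Q_p[[K]]\to D_{r_n}(K,\Q_p)$ are flat for $n\geq 1$ by \cite{ST03}. By the observation recalled above, flatness gives $E^q_{D_{r_n}}(D_{r_n}\otimes M)\cong E^q_{\Q_p[[K]]}(M)\otimes D_{r_n}$. Hence each $j_{D_{r_n}}(D_{r_n}\otimes M)\geq j_{\Q_p[[K]]}(M)$. If $n=0$ causes trouble, we would discard it, since the minimum may be taken over $n\geq 1$ up to the harmless convention in the definition.

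For the reverse inequality we need faithfulness. Let $q=j_{\Q_p[[K]]}(M)$, so $E:=E^q_{\Q_p[[K]]}(M)\neq 0$ is a finitely generated right $\Q_p[[K]]$-module. We must show that $E\otimes_{\Q_p[[K]]}D_{r_n}\neq 0$ for some $n$. We would use that $D(K,\Q_p)=\varprojlim_n D_{r_n}$ is faithfully flat over $\Q_p[[K]]$ (Schneider--Teitelbaum, Theorem 5.2 of their locally analytic admissibility paper). This gives $E\otimes D(K,\Q_p)\neq 0$. For finitely presented $E$, we have $E\otimes D(K)=\varprojlim_n E\otimes D_{r_n}$, since the $D_{r_n}$ form a Fréchet--Stein structure and coadmissible modules are the inverse limits of their base changes. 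Hence some $E\otimes D_{r_n}$ is nonzero, and therefore $j_{D_{r_n}}(D_{r_n}\otimes M)\leq q$.

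The main obstacle is this faithfulness step: passing from nonvanishing over $D(K,\Q_p)$ to nonvanishing over a single $D_{r_n}$. The Fréchet--Stein property handles this. The underlying reason it works is that right modules over these rings behave like left modules via the anti-involution $g\mapsto g^{-1}$, so the cited left-module results apply to $E$.
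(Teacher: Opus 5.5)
Your argument is correct. The paper gives no argument of its own here; it simply cites \cite[Lemma 3.9]{DPS23}. So the comparison is between a bare citation and your self-contained proof from the Schneider--Teitelbaum inputs, which runs as follows:
\begin{itemize}
\item the identification $(\Pi^{\la})^*\cong D(K,\Q_p)\otimes_{\Q_p[[K]]}\Pi^*$ reduces everything to grades of $M=\Pi^*$ and its base changes;
\item flat base change of $E^q$ along $\Q_p[[K]]\to D_{r_n}(K,\Q_p)$ gives $d(\Pi^{\la})\leq d(\Pi)$;
\item faithful flatness of $D(K,\Q_p)$ over $\Q_p[[K]]$, together with the fact that the coadmissible module $E^q(M)\otimes D(K,\Q_p)$ is the inverse limit of its base changes to the $D_{r_n}$, produces a single $n$ with $E^q(M)\otimes D_{r_n}\neq 0$. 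This gives the reverse inequality.
\end{itemize}
You correctly saw that one fixed $D_{r_n}$ cannot be faithfully flat, so the passage through $D(K,\Q_p)$ and coadmissibility is genuinely needed. You also correctly flagged that the formula $E^q_B(B\otimes_A M)\cong E^q_A(M)\otimes_A B$ requires flatness on both sides, and you handled this with the anti-involution $g\mapsto g^{-1}$. The citation is shorter; your version makes explicit exactly which structural facts about distribution algebras the lemma depends on.
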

\begin{proof}
\cite[Lemma 3.9]{DPS23}.
\end{proof}

\begin{rem}
If $\Pi$ is an admissible Banach space representation of $G$ over $\Q_p$, then $d(\Pi)$ agrees with the Gelfand-Kirillov dimension of its  $\mod p$ representation: suppose $H$ is a uniform pro-$p$ open subgroup of $G$ and $\Pi^o$ is a $H$-invariant lattice, then 
\[d(\Pi)= \dim H \lim_{n\to \infty} \frac{\log \dim_{\F_p}(\Pi^o/p)^{H_n}}{\log[H:H_n]}\]
where $H_n=H^{p^n}:=\{h^{p^n},h\in H\}$ is an open subgroup of $H$. See  \cite[Section 3.4]{DPS23}. From this point of view $j_A$ is also called codimension.
\end{rem}

Let $E$ be a finite extension of $\Q_p$. Let $H$ be a uniform pro-$p$ open subgroup of $G$. As in \cite[Section 2.2]{DPS23} there is a $\cO_E$-sub-Lie algebra $\mathfrak{g}_{H,\cO_E}$ of $\Lie(H)\otimes_{\Q_p} E$ which is finite free of rank $\dim H$ over $\cO_E$. The classical Lazard isomorphism provides natural isomorphisms for $n\geq 0$,
\[D_{1/p}(H^{p^n},\Q_p)\otimes_{\Q_p} E\cong U(\widehat{p^n\mathfrak{g}_{H,\cO_E}})[1/p]\]
where $U(\widehat{p^n\mathfrak{g}_{H,\cO_E}})$ denotes the $p$-adic completion of the universal enveloping algebra $U(p^n\mathfrak{g}_{H,\cO_E})$. By considering the $p$-adic filtration $p^\bullet U(p^n\mathfrak{g}_{H,\cO_E})$ on $U(p^n\mathfrak{g}_{H,\cO_E})[1/p]=U(\Lie(H))\otimes_{\Q_p} E=: U(\Lie(H))_E$ and invoking \cite[Proposition 1.2]{ST03}, we deduce that $U(\widehat{p^n\mathfrak{g}_{H,\cO_E}})[1/p]$ is flat over $U(\Lie(H))_E$.
Moreover \cite[Proposition 2.3]{DPS23} says that $D_{r_n}(H,\Q_p)$ is a natural free (both left and right) module over $D_{1/p}(H^{p^n},\Q_p)$. 

\begin{prop} \label{proplbdj}
$D_{r_n}(H,E):=D_{r_n}(H,\Q_p)\otimes_{\Q_p} E$ is a flat module of $U(\Lie(H))_E$. In particular, if $I$ is two-sided ideal of $U(\Lie(H))_E$ and $M$ is a finitely generated $D_{r_n}(H,E)/(I)$-module, then
\[j_{D_{r_n}(H,E)}(M)\geq j_{U(\Lie(H))_E}(U(\Lie(H))_E/I).\]
\end{prop}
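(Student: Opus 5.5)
Flatness comes first, then the grade inequality. The map $U(\Lie(H))_E\to D_{r_n}(H,E)$ factors as
\[U(\Lie(H))_E\to U(\widehat{p^n\mathfrak{g}_{H,\cO_E}})[1/p]\cong D_{1/p}(H^{p^n},\Q_p)\otimes_{\Q_p}E\to D_{r_n}(H,\Q_p)\otimes_{\Q_p}E .\]
The first arrow is flat; this was deduced just above from \cite[Proposition 1.2]{ST03}. The middle identification is the Lazard isomorphism. The last arrow makes $D_{r_n}(H,E)$ a free module, both left and right, by \cite[Proposition 2.3]{DPS23} after base change to $E$. Free modules are flat and a composite of flat extensions is flat, so $D_{r_n}(H,E)$ is flat over $U(\Lie(H))_E$ on both sides. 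One point needs checking: the composite must be the natural map, i.e. the one sending $\Lie(H)$ to the derivations inside $D_{r_n}(H,\Q_p)$. This holds because the Lazard isomorphism and the inclusion $D_{1/p}(H^{p^n})\subseteq D_{r_n}(H)$ are both compatible with the inclusions of $U(\Lie H)$.

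For the inequality, set $A=U(\Lie(H))_E$ and $B=D_{r_n}(H,E)$. Both are Noetherian; $B$ is Noetherian because it is finite free over the Noetherian ring $U(\widehat{p^n\mathfrak{g}})[1/p]$. Put $\bar A=A/I$ and $\bar B=B/BI$. Since $I$ is two-sided, $BI=IB$ is a two-sided ideal and $\bar B\cong B\otimes_A\bar A\cong \bar A\otimes_A B$. Right flatness of $B$ gives $j_B(\bar A\otimes_A B)\ge j_A(\bar A)$ by the remark preceding the statement. Thus $j_B(\bar B)\ge j_A(\bar A)=:c$.

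Now let $M$ be a finitely generated $\bar B$-module. Then $M$ is a quotient of $\bar B^{m}$, but grade does not pass to quotients by \eqref{jadd} alone. I would instead resolve $M$ by a free $\bar B$-resolution $\bar B^{\bullet}\to M$ and use a spectral sequence of the form
\[E_2^{p,q}=\Ext^p_{\bar B}\bigl(M,\Ext^q_B(\bar B,B)\bigr)\Rightarrow \Ext^{p+q}_B(M,B).\]
The inner terms $\Ext^q_B(\bar B,B)=E^q_B(\bar B)$ vanish for $q<c$, so $\Ext^{s}_B(M,B)=0$ for $s<c$. Hence $j_B(M)\ge c$. This is the standard change-of-rings argument; equivalently, one can say that any module over $B/IB$ has grade at least $j_B(B/IB)$.

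The main obstacle is making this last step rigorous. The spectral sequence requires $\Hom_B(M,-)=\Hom_{\bar B}(M,\Hom_B(\bar B,-))$ and that $\Hom_B(\bar B,-)$ sends injective $B$-modules to injective $\bar B$-modules. Both hold, via the usual adjunction for the two-sided ideal $IB$, so the spectral sequence is available and the argument goes through. If one prefers to avoid it, use that $B$ is Auslander–Gorenstein and argue with the Auslander condition, which likewise gives $j_B(M)\ge j_B(\bar B)$ for $\bar B$-modules $M$.
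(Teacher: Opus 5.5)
Your flatness argument is the paper's: the flat map $U(\Lie(H))_E\to U(\widehat{p^n\mathfrak{g}_{H,\cO_E}})[1/p]$, then the Lazard isomorphism, then the finite free extension $D_{1/p}(H^{p^n},\Q_p)\subseteq D_{r_n}(H,\Q_p)$. The resulting bound $j_B(B/BI)\geq j_A(A/I)$ is also the paper's. The two arguments differ in how they pass from $B/BI$ to an arbitrary finitely generated $M$. The paper uses that $D_{r_n}(H,E)$ is Auslander--Gorenstein, so grade cannot drop under quotients, and $M$ is a quotient of $(B/BI)^{\oplus m}$; this is your fallback. Your main route is the change-of-rings spectral sequence $\Ext^p_{\bar B}(M,\Ext^q_B(\bar B,B))\Rightarrow\Ext^{p+q}_B(M,B)$. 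It is a valid, purely formal alternative that needs no Auslander condition. However, it only applies if $\bar B=B/BI$ is a ring, i.e.\ if the left ideal $BI$ is two-sided.

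That is the one step whose justification is wrong as written. The inference ``since $I$ is two-sided, $BI=IB$'' fails for a general flat extension $A\subseteq B$. Take $A$ the diagonal matrices in $B=M_2(E)$ and $I=Ee_{11}$; then $BI$ is the set of matrices with vanishing second column, which is not a two-sided ideal.

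Here the equality does hold, for a reason specific to distribution algebras. A two-sided ideal $I$ of $U(\Lie(H))_E$ is $\mathrm{ad}$-stable. Hence it is stable under $\mathrm{Ad}(h)=\exp(\mathrm{ad}\log h)$ for $h\in H$, since the series converges on each finite-dimensional PBW filtration step. So $hI=Ih$ in $B$. The group algebra $E[H]$ is dense in $B$, and the finitely generated one-sided ideals $BI$ and $IB$ are closed in the Noetherian Banach algebra $B$. It follows that $BI=IB$.

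With this inserted, your spectral-sequence argument is complete. The Auslander--Gorenstein route avoids the issue altogether: it only needs $M$ to be a quotient of some $(B/BI)^{\oplus m}$, so it does not matter whether $(I)$ means $BI$ or the two-sided ideal generated by $I$.
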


\begin{proof}
The flatness follows from the above discussion. It also implies that 
\[j_{D_{r_n}(H,E)}(D_{r_n}(H,E)/(I))\geq j_{U(\Lie(H))_E}(U(\Lie(H))_E/I).\]
Since $D_{r_n}(H,E)$ is Auslander-Gorenstein and $M$ is a quotient of $(D_{r_n}(H,E)/(I))^{\oplus d}$ for some $d$, we have 
\[j_{D_{r_n}(H,E)}(M)\geq j_{D_{r_n}(H,E)}(D_{r_n}(H,E)/(I)).\]
\end{proof}

In our application, $I$ has the following special form.

\begin{lem} \label{lemj}
Suppose that there is an isomorphism of $E$-Lie algebras $\Lie H\otimes_{\Q_p} E\cong L\times L'$. Let $I$ be a two-sided ideal of $U(L)$ of finite codimension. 
\[j_{U(\Lie(H))_E}(U(\Lie(H))_E/(I))=j_{U(L)}(U(L)/I)=\dim_E L.\]
\end{lem}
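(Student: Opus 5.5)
We prove the two equalities separately. Throughout, $A:=U(\Lie(H))_E\cong U(L)\otimes_E U(L')$ via the given isomorphism, and $(I)=I\otimes_E U(L')=A\cdot I$, so that $A/(I)\cong (U(L)/I)\otimes_E U(L')$.

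\emph{First equality.} Since $U(L')$ is a free $E$-module, the ring map $U(L)\to A$ is flat, indeed free: $A\cong U(L)\otimes_E U(L')$ is free as a left $U(L)$-module. Choose a resolution $P_\bullet\to U(L)/I$ by finitely generated free left $U(L)$-modules; this is possible because $U(L)$ is Noetherian. Then $A\otimes_{U(L)}P_\bullet$ is a free resolution of $A/(I)$, and
\[
\Hom_A(A\otimes_{U(L)}P_\bullet,A)=\Hom_{U(L)}(P_\bullet,U(L))\otimes_E U(L').
\]
Since $-\otimes_E U(L')$ is exact and faithful, we get $E^q_A(A/(I))\cong E^q_{U(L)}(U(L)/I)\otimes_E U(L')$, and this vanishes if and only if $E^q_{U(L)}(U(L)/I)$ vanishes. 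Hence the two grades coincide.

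\emph{Second equality.} Set $n=\dim_E L$ and $M=U(L)/I$, a finite-dimensional left $U(L)$-module. We compute $\Ext^q_{U(L)}(M,U(L))$ using the Chevalley--Eilenberg (Koszul) resolution $U(L)\otimes\wedge^\bullet L\to E$, tensored with $M$. Equivalently, we use $\Ext^q_{U(L)}(M,N)\cong H^q(L,\Hom_E(M,N))$ with $N=U(L)$. By the standard duality for Lie algebra cohomology (Poincaré duality for $U(L)$, see e.g. Chevalley--Eilenberg or Hazewinkel), for a finite-dimensional $M$ the modules $\Ext^q_{U(L)}(M,U(L))$ vanish for $q\neq n$, and
\[
\Ext^n_{U(L)}(M,U(L))\cong M^*\otimes_E \wedge^n L^{*}
\]
up to a twist by the trace character, as a right module. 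I plan to verify this directly: the complex $\Hom_{U(L)}(U(L)\otimes\wedge^\bullet L\otimes M,\,U(L))$ is isomorphic to $U(L)\otimes\wedge^{n-\bullet}L\otimes M^*$ twisted, which is again a Koszul resolution and is therefore exact except in top degree. The top degree is nonzero provided $M\neq0$.

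If $I=U(L)$, then $M=0$ and both sides equal $\infty$. The statement implicitly assumes $I$ is proper, so we take that as given; alternatively the identity is read with this convention. The only genuinely nontrivial input is the Koszul duality step, namely that the dual of the Chevalley--Eilenberg resolution is again a resolution. I expect that step to be the main obstacle. I would handle it by reducing to $M=E$ twisted, using the tensor identity $U(L)\otimes_E M\cong U(L)\otimes_E M_{\mathrm{triv}}$ as left modules, which holds for finite-dimensional $M$.
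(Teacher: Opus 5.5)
Your proposal is correct. The first equality is handled as in the paper, which just calls it clear: flat (indeed free) base change along $U(L)\to U(L)\otimes_E U(L')$ gives $E^q_A(A/(I))\cong E^q_{U(L)}(U(L)/I)\otimes_E U(L')$.

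For the second equality you take a genuinely different route. The paper's argument goes through the commutative ring $S(L)$:
\begin{enumerate}
\item put a good filtration on $U(L)/I$ compatible with the PBW filtration;
\item invoke Bjork's theorem to get $j_{U(L)}(U(L)/I)=j_{S(L)}(\mathrm{Gr}(U(L)/I))$;
\item since $\mathrm{Gr}$ is finite-dimensional, reduce via a composition series to $S(L)/J$ with $J$ maximal, where the commutative Koszul complex gives grade $\dim_E L$.
\end{enumerate}

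You stay in $U(L)$ and compute all the $\Ext$ groups directly. The tensor identity $M\otimes_E U(L)\cong M_{\mathrm{triv}}\otimes_E U(L)$ reduces $\Ext^q_{U(L)}(M,U(L))$ to $\Ext^q_{U(L)}(E,U(L))^{\oplus \dim M}$. The self-duality of the Chevalley--Eilenberg resolution (Hazewinkel-type duality, i.e.\ $U(L)$ is twisted Calabi--Yau of dimension $n$) then shows this is concentrated in degree $n$ and nonzero there.

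What each route buys:
\begin{itemize}
\item Your argument avoids Bjork's filtered-to-graded comparison. It also avoids the additivity of grade along a composition series; getting equality there, not just $\geq$, needs a Cohen--Macaulay or Auslander--Gorenstein input.
\item It gives a stronger conclusion: $E^q_{U(L)}(M)=0$ for all $q\neq n$ and every finite-dimensional $M$, with $E^n$ identified explicitly as $M^*\otimes\wedge^n L^*$ up to the trace twist.
\item The paper's route is shorter given the black box. It also extends to arbitrary finitely generated modules, where the grade is $\dim_E L$ minus the dimension of the characteristic variety.
\end{itemize}

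Your remark that the statement needs $I\neq U(L)$ is right, since otherwise the grade is $\infty$. This is harmless in the paper, because only the lower bound $j\geq \dim_E L$ is used together with Proposition~\ref{proplbdj}.
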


\begin{proof}
The first equality is clear as $U(\Lie(H))_E=U(L)\otimes_E U(L')$. For the second equality, consider the PBW filtration on $U(L)$ whose graded algebra is the symmetric algebra $S(L)$ by the PBW theorem. By a theorem of Bjork (cf. \cite[Theorem 3.4]{DPS23}), we can take a good filtration on $U(L)/I$ and have
\[j_{U(L)}(U(L)/I)=j_{S(L)}(\mathrm{Gr}(U(L)/I)).\]
Since $\mathrm{Gr}(U(L)/I)$ is a finite-dimensional $E$-vector space by our assumption, we easily reduce to the case where $\mathrm{Gr}(U(L)/I)=S(L)/J$ for a maximal ideal $J$ of $S(L)$. The standard Koszul complex shows that the grade is $\dim L$.
\end{proof}

\begin{para}
Now we specialize to the Hilbert case. Fix a level $K^p$ away from $p$ and a level $K_p$ at $p$. The action of $K_p$ on the tower of Shimura varieties $\{\Sh_{K^pK'_p,\bC}\}_{K'_p\subseteq \rmG(\Q_p)}$ is not faithful in general and factors through a quotient 
$p$-adic Lie group denoted by $\widetilde{K_p}$. Hence $\widetilde{K_p}$ acts on the completed cohomology. We note that the kernel of this quotient map is  a closed subgroup in the center of $\rmG(\Q_p)$ whose dimension is described by Leopoldt's conjecture. In particular, $\Lie \widetilde{K_p}= \mathfrak{sl}_2(F_p)\times \Lie Z( \widetilde{K_p})$.
\end{para}

\begin{thm} \label{mtj}
For $i\geq 0$,
\[d(\tilde{H}^i(K^p)_{\Q_p})\leq \dim \widetilde{K_p}-3(d-i).\]
Equivalently $j_{\Q_p[[\widetilde{K_p}]]}(\tilde{H}^i(K^p)_{\Q_p})\geq 3(d-i)$.
\end{thm}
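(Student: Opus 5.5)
The plan is to pass to locally analytic vectors, then to a uniform pro-$p$ subgroup $H\subseteq\widetilde{K_p}$ and its distribution algebras, and bound the grade by means of Theorem \ref{MT}, Proposition \ref{proplbdj} and Lemma \ref{lemj}. For $i\ge d$ the bound $3(d-i)\le 0$ is trivial, so assume $i\le d-1$.

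Let $\Pi=\tilde{H}^i(K^p)_{\Q_p}$. By Lemma \ref{comBladim}, $d(\Pi)=d(\Pi^{\la})$. So it suffices to show
\[
j_{D_{r_n}(H,\Q_p)}\bigl(D_{r_n}(H,\Q_p)\otimes_{D(H,\Q_p)}(\Pi^{\la})^*\bigr)\ge 3(d-i)
\]
for all $n\ge 1$, where $H\subseteq\widetilde{K_p}$ is uniform pro-$p$. Since $E/\Q_p$ is finite and free, extending scalars to $E$ does not change grades, so we may work with $M_n:=D_{r_n}(H,E)\otimes_{D(H,E)}(\Pi^{\la}_E)^*$. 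This is a finitely generated $D_{r_n}(H,E)$-module because $\Pi^{\la}$ is admissible locally analytic.

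Next we use Theorem \ref{MT}, in the form it is actually proved: $\Pi^{\la}_E$ satisfies \emph{(i-SL)}. The theorem is stated for the locally analytic vectors of the $\rmG(\Q_p)$-action, and the action of $\Lie\widetilde{K_p}_E=\prod_\tau\mathfrak{sl}_{2,\tau}(E)\times\Lie Z$ on $\Pi^{\la}_E$ is the restriction of the $\Lie\rmG_E$-action. So $\Pi^{\la}_E$ has a finite $\Lie\rmG_E$-invariant filtration $\Fil^\bullet$, and on each $\gr^j$ there is a subset $S_j\subseteq\Sigma_p$ with $|S_j|\ge d-i$ and a finite-codimensional two-sided ideal $I_j\subseteq U(L_j)$ annihilating $\gr^j$, where $L_j=\prod_{\tau\in S_j}\mathfrak{sl}_{2,\tau}(E)$. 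Such an ideal exists by taking the product of the finite-codimensional kernels for each $\tau$.

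Dualizing and tensoring with $D_{r_n}$ should turn the filtration into a filtration of $M_n$, with graded pieces $N_j$ that are finitely generated $D_{r_n}(H,E)/(I_j)$-modules. This is the main obstacle. We need each $\Fil^j$ to be closed, so that it is an admissible locally analytic subrepresentation and the strong duals form exact sequences. We also need exactness of $D_{r_n}\otimes_{D}-$ on coadmissible modules, and that the annihilation by $I_j$ survives dualization, since $I_j$ acting as zero on $\gr^j$ gives the transposed ideal acting as zero on the dual. The first point is the delicate one. I would try to take closures of the $\Fil^j$ in $\Pi^{\la}_E$, or better to work directly with $H^i(\Fl,\cO^{\la})$, where the filtration is built from kernels and images of continuous maps between cohomology groups of the sheaves of Theorem \ref{thmLP}. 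Closures remain $\Lie\rmG_E$-stable and remain killed by $I_j$, because the action is continuous. Exactness of $D_{r_n}\otimes_D-$ on coadmissible modules is the Schneider--Teitelbaum flatness result.

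With the filtration in hand, Proposition \ref{proplbdj} and Lemma \ref{lemj}, applied with the decomposition $\Lie H_E\cong L_j\times L_j'$, give
\[
j_{D_{r_n}(H,E)}(N_j)\ge\dim_E L_j=3|S_j|\ge 3(d-i).
\]
Finally, (\ref{jadd}) applied along the finite filtration of $M_n$ gives $j(M_n)\ge 3(d-i)$ for all $n$. Hence $d(\Pi)\le\dim\widetilde{K_p}-3(d-i)$, and the equivalent form of the statement follows from the definition of $d$.
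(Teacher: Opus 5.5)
Your outline is the paper's own proof: Lemma \ref{comBladim}, passage to $D_{r_n}(H,E)$, the filtration from Theorem \ref{MT}, then Proposition \ref{proplbdj}, Lemma \ref{lemj} and \eqref{jadd}. The paper simply asserts that $M$ inherits the filtration. The one place you go further is your repair of that step, and the repair does not work as stated. Theorem \ref{MT} only gives a $\Lie\rmG_E$-stable filtration. Taking closures makes the $\Fil^j$ closed, $\Lie\rmG_E$-stable and killed by $I_j$, but not $H$-stable: a closed Lie-stable subspace of a locally analytic representation need not be stable under any open subgroup. So the closed $\Fil^j$ are not subrepresentations, and their duals are not $D(H,E)$-submodules. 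There is also a minor slip. The ideal attached to $\gr^j$ should be the \emph{sum} of the ideals $U(L_j)\cdot\ker_\tau$, $\tau\in S_j$ (or simply the annihilator of $\gr^j$ in $U(L_j)$). Their product has infinite codimension once $|S_j|\ge 2$.

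You can avoid transporting the filtration altogether. Put $\mathfrak{k}=\Lie(H)\otimes_{\Q_p}E$ and $U=U(\mathfrak{k})$. Write $\Fil^0=\Pi^{\la}_E\supseteq\cdots\supseteq\Fil^m=0$, and let $\mathfrak{b}_j=U\cdot I_j$, a two-sided ideal with $\mathfrak{b}_j\Fil^j\subseteq\Fil^{j+1}$. Then the two-sided ideal $\mathfrak{a}=\mathfrak{b}_{m-1}\cdots\mathfrak{b}_0$ kills $\Pi^{\la}_E$. Hence its image $\dot{\mathfrak{a}}$ under the principal anti-automorphism kills $(\Pi^{\la}_E)^*$.

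Choose $\ell_1,\dots,\ell_k\in(\Pi^{\la}_E)^*$ whose images generate $M_n$. For $x\in\dot{\mathfrak{a}}$ we have $\mu x\otimes\ell_s=\mu\otimes x\ell_s=0$. So $M_n$ is a quotient of $(D_{r_n}(H,E)\otimes_U U/\dot{\mathfrak{a}})^{\oplus k}$, and flatness plus the Auslander--Gorenstein property give $j(M_n)\ge j_U(U/\dot{\mathfrak{a}})$, exactly as in Proposition \ref{proplbdj}.

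Since $S(\mathfrak{k})$ is a domain, principal symbols multiply, so $\mathrm{Gr}(\dot{\mathfrak{a}})\supseteq\prod_j\mathrm{Gr}(\dot{\mathfrak{b}}_j)$. Each $\mathrm{Gr}(\dot{\mathfrak{b}}_j)$ contains all monomials of large degree in $L_j$. Therefore the support of $\mathrm{Gr}(U/\dot{\mathfrak{a}})$ lies in $\bigcup_j L_j^{\perp}\subseteq\mathfrak{k}^*$. Bj\"ork's theorem, used as in the proof of Lemma \ref{lemj}, then gives $j_U(U/\dot{\mathfrak{a}})\ge\min_j\dim L_j\ge 3(d-i)$. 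This route uses only what Theorem \ref{MT} actually states.
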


\begin{proof}
Fix a uniform pro-$p$ open subgroup $H$ of $\widetilde{K_p}$. By Lemma \ref{comBladim}, it suffices to prove that $n\geq 1$,
\[j_{D_{r_n}(H,\Q_p)}(D_{r_n}(H,\Q_p)\otimes_{D(H,\Q_p)}\Pi^*)\geq  3(d-i)\]
where $\Pi=\tilde{H}^i(K^p)_{\Q_p}^{\la}$. Fix a finite extension $E$ of $\Q_p$ containing  all embeddings of $F$. Let $M=D_{r_n}(H,\Q_p)\otimes_{D(H,\Q_p)}\Pi^*$ and $M_E=M\otimes_{\Q_p} E$. It's easy to see (\cite[Lemma 8.8]{ST03}) that $j_{D_{r_n}(H,\Q_p)}(M)=j_{D_{r_n}(H,E)}(M_E)$. By Theorem \ref{MT},  $M$ is filtered by $\Lie(\widetilde{K_p})_E$-modules on which the action of $U(\mathfrak{sl}_{2,\tau_1}(E)\times\cdots\times \mathfrak{sl}_{2,\tau_{d-i}}(E))$ factors through a finite-dimensional quotient for some embeddings $\tau_1,\cdots,\tau_{d-i}$. Thus we can apply Proposition \ref{proplbdj} and Lemma \ref{lemj} to get the desired lower bound.\end{proof}

\subsection{A density result}
\begin{para}
Completed cohomology with compact support is defined as
\[
\tilde{H}_c^i(K^p):=\varprojlim_n\varinjlim_{K_p\subseteq \rmG(\Q_p)} H_c^i(\Sh_{K^pK_p,\bC},\Z/p^n).
\]
Again $\tilde{H}_c^i(K^p)_{\Q_p}:=\tilde{H}_c^i(K^p)\otimes_{\Z_p}{\Q_p}$ is an admissible representation of $\widetilde{K_p}$. 
\end{para}

\begin{thm} \label{mtref}
Suppose $K^pK_p$ is neat. There are natural isomorphisms
\[\Hom_{\Q_p[[\widetilde{K_p}]]}(\tilde{H}^d(K^p)_{\Q_p}^*,\Q_p[[\widetilde{K_p}]])=\tilde{H}_c^d(K^p)_{\Q_p}^*,\]
\[\Hom_{\Q_p[[\widetilde{K_p}]]}(\tilde{H}_c^d(K^p)_{\Q_p}^*,\Q_p[[\widetilde{K_p}]])=\tilde{H}^d(K^p)_{\Q_p}^*.\]
Particularly both $\tilde{H}_c^d(K^p)_{\Q_p}^*$ and $\tilde{H}^d(K^p)_{\Q_p}^*$ are reflexive $\Q_p[[\widetilde{K_p}]]$-modules.
\end{thm}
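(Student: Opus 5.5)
We plan to deduce Theorem \ref{mtref} from Poincaré duality at finite level combined with the vanishing of completed cohomology below middle degree (Theorem \ref{mtj} and its compact-support analogue). Fix $K^pK_p$ neat and let $K'_p\subseteq K_p$ range over open normal subgroups. Each $\Sh_{K^pK'_p,\bC}$ is a smooth (non-compact) manifold of real dimension $2d$. Poincaré duality gives a perfect pairing $H^i\times H_c^{2d-i}\to\Z/p^n$ on it. Following Emerton and Calegari--Emerton, passing to the limit yields a spectral sequence
\[E_2^{a,b}=\Ext^a_{\Z_p[[\widetilde{K_p}]]}\bigl(\tilde{H}^{b}(K^p)^*,\Z_p[[\widetilde{K_p}]]\bigr)\Longrightarrow \tilde{H}_c^{2d-a-b}(K^p)^*\text{-type terms}.\]
Concretely, we will use the complex of $\Z_p[[\widetilde{K_p}]]$-modules $C^\bullet$ computing completed homology, realised as chains on the tower. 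Its $\Z_p[[\widetilde{K_p}]]$-dual computes completed compactly supported cohomology in complementary degrees, by duality on the Borel--Serre compactification: the interior versus boundary pair gives the compact support side. This produces the spectral sequence above, and symmetrically one with $H$ and $H_c$ swapped. We then invert $p$.

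The key input is the vanishing of $\Ext$ groups. By Theorem \ref{mtj}, $j(\tilde{H}^i(K^p)^*_{\Q_p})\ge 3(d-i)$, so $E_2^{a,i}=0$ for $a<3(d-i)$, after rationalising. We also need the analogous bound for $\tilde{H}^i_c$. For this we plan to use the boundary long exact sequence: the boundary of the Borel--Serre compactification has cohomology induced from Borel subgroups, whose completed cohomology has grade at least about $\dim\widetilde{K_p}-\dim(\text{Borel part})$, comfortably large. Alternatively we will run the same duality argument in the other direction, which simultaneously shows $\tilde H^i_c$ vanishes rationally in the needed range. Since $\Q_p[[\widetilde{K_p}]]$ is Auslander--Gorenstein, $E_2^{a,b}$ with $b>d$ are handled by the inequality $j(E^a(M))\ge a$. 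Together with degree counting $a+b$, this shows that in total degree $d$ the only surviving term is $E_2^{0,d}=\Hom(\tilde H^d(K^p)^*_{\Q_p},\Q_p[[\widetilde{K_p}]])$. It also shows that all differentials into and out of it vanish: the sources have $b<d$ and $a\ge 2$, hence they lie below the codimension bound $3(d-b)>a$ when $a\le 3(d-b)-1$, and one checks $a=b'$-differentials land in such zero regions. The abutment in that degree is $\tilde H_c^d(K^p)^*_{\Q_p}$, which gives the first isomorphism. The second follows symmetrically.

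Reflexivity then follows formally: composing the two isomorphisms identifies $\tilde H^d{}^*$ with its double dual, and one checks the map is the canonical evaluation map by naturality of the duality pairing. The main obstacle will be the bookkeeping of the spectral sequence, namely ensuring that all differentials touching $E^{0,d}$ vanish. Degree $d+1$ terms $E^{a,b}$ with $a+b=d\pm1$ and $b<d$ require $a\ge 3(d-b)$, which is impossible when $a=d\pm1-b<3(d-b)$ for $b<d$. For the last step we still need the bounds on the compactly supported side, which we would obtain by the boundary comparison above.
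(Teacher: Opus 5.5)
\textbf{Verdict: the first isomorphism goes through as you describe, but the second (and hence reflexivity) has a genuine gap at the grade bound for compactly supported completed cohomology.}

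Your architecture matches the paper's. You use the Calegari--Emerton Poincar\'e duality spectral sequences with $p$ inverted, Theorem \ref{mtj} for $\tilde H^i$, and the Borel--Serre boundary sequence for $\tilde H^i_c$. The spectral-sequence bookkeeping you call the main obstacle is routine. It suffices that $E^{d-i}$ and $E^{d-i+1}$ vanish on $\tilde H^i(K^p)^*_{\Q_p}$ and on $\tilde H^i_c(K^p)^*_{\Q_p}$ for all $i<d$. Terms with $b>d$ never interact with $E^{0,d}$, so no Auslander--Gorenstein input is needed there. For $\tilde H^i$ the vanishing is just $d-i+1<3(d-i)$.

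The gap is on the compactly supported side. From $\tilde H^{i-1}(\partial)\to\tilde H^i_c\to\tilde H^i$ and the Auslander--Gorenstein property, for $i\ge 1$ you need the dual of $\tilde H^{i-1}(\partial)\cong\mathrm{Ind}^{\rmG(\Q_p)}_{\mathrm{B}(\Q_p)}\tilde H^{i-1}_{\mathrm{T}}$ to have grade at least $d-i+2$. At $i=1$ this means at least $d+1$. Your estimate does not give this, and the bound is not \emph{comfortably large}. $\dim\widetilde{K_p}$ minus the dimension of the image of the Borel in $\widetilde{K_p}$ is exactly $d$. That is all the unipotent radical provides, acting trivially on a representation induced from $\mathrm{T}$, and it is one short of what is needed.

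The missing idea is arithmetic. Let $\overline{\cO_F^\times}$ be the closure of the global units in $\cO_{F,p}^\times$. An open subgroup of $(\overline{\cO_F^\times})^2\subseteq\mathrm{T}(\Q_p)$ acts trivially on $\tilde H^j_{\mathrm{T}}$. Only its diagonal, central part is already killed in $\widetilde{K_p}$. So, for instance via $\{(a,a^{-1})\}$, it contributes $d_0=\dim\overline{\cO_F^\times}$ extra dimensions. In the relevant range $1\le i<d$ we have $d\ge 2$, and Dirichlet's unit theorem then gives $d_0\ge 1$.

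The paper makes this precise by embedding the induced representation, after inverting $p$, into $C(\widetilde J\backslash\widetilde{\GL_2(\cO_{F,p})},\Z_p)^{\oplus l}$. Here $J$ consists of upper triangular matrices whose diagonal lies in the kernel of the torus action. The grade is then at least $\dim\widetilde J\ge d+d_0\ge d+1$, which is exactly tight using only the trivial bound toward Leopoldt. The case $i=0$, which covers all of $d=1$, uses $\tilde H^0_c=0$ by non-compactness.

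Your fallback, getting the bound from the first spectral sequence, does not work. $\tilde H^i_c$ does not vanish rationally for $1\le i<d$: the cokernel of $\tilde H^0\to\tilde H^0(\partial)$ injects into $\tilde H^1_c$ and is nonzero for Gelfand--Kirillov dimension reasons. Moreover, the Auslander condition $j(E^a(M))\ge a$ only bounds the grade of $\tilde H^i_c(K^p)^*_{\Q_p}$ from below by about $d-i$, short of the required $d-i+2$.
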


\begin{proof}
Following \cite[\S1.1]{CE12}, we define the completed homology and its Borel-Moore variant
\[\tilde{H}_i(K^p):=\varinjlim_{K_p\subseteq \rmG(\Q_p)} H_i(\Sh_{K^pK_p,\bC},\Z/p),\]
\[\tilde{H}_i^{BM}(K^p):=\varinjlim_{K_p\subseteq \rmG(\Q_p)} H_i^{BM}(\Sh_{K^pK_p,\bC},\Z/p),\]
where $H^{BM}_*$ denotes the Borel-Moore homology. There are natural isomorphisms
\[\tilde{H}_i(K^p)\otimes_{\Z_p}\Q_p\cong \tilde{H}^i(K^p)_{\Q_p}^*,\]
\[\tilde{H}_i^{BM}(K^p)\otimes_{\Z_p}\Q_p\cong \tilde{H}_c^i(K^p)_{\Q_p}^*\]
by Theorem 1.1.3 in loc.cit. and noting that the torsion submodules of $\tilde{H}_c^i(K^p)$ and  $\tilde{H}^i(K^p)$ have bounded exponents by Theorem 1.1.1 in loc.cit.. Now inverting $p$ in the Poincar\'e duality sequences in \S 1.3 loc.cit., we get two spectral sequences
\begin{eqnarray*} 
E_{2}^{i,j}=E^i_{\Q_p[[\widetilde{K_p}]]}(\tilde{H}^j(K^p)_{\Q_p}^*)\Longrightarrow \tilde{H}_c^{2d-i-j}(K^p)_{\Q_p}^* ,\\
E_{2}^{i,j}=E^i_{\Q_p[[\widetilde{K_p}]]}(\tilde{H}^j_c(K^p)_{\Q_p}^*)\Longrightarrow \tilde{H}^{2d-i-j}(K^p)_{\Q_p}^*
\end{eqnarray*}
(The $d$ in the reference refers to the dimension as a real manifold so is equal to $2d$ in our case.) 
Note that $E_2^{0,d}=\Hom_{\Q_p[[\widetilde{K_p}]]}(\tilde{H}^d(K^p)_{\Q_p}^*,\Q_p[[\widetilde{K_p}]]),\Hom_{\Q_p[[\widetilde{K_p}]]}(\tilde{H}_c^d(K^p)_{\Q_p}^*,\Q_p[[\widetilde{K_p}]])$.
Therefore it is enough to prove the following lemma.
\end{proof}

\begin{lem}
For $i<d$,
\begin{eqnarray} \label{cdv1}
E^{d-i}_{\Q_p[[\widetilde{K_p}]]}(\tilde{H}^{i}(K^p)_{\Q_p}^*)=E^{d-i+1}_{\Q_p[[\widetilde{K_p}]]}(\tilde{H}^{i}(K^p)_{\Q_p}^*)=0.\\ \label{cdv2}
E^{d-i}_{\Q_p[[\widetilde{K_p}]]}(\tilde{H}_c^{i}(K^p)_{\Q_p}^*)=E^{d-i+1}_{\Q_p[[\widetilde{K_p}]]}(\tilde{H}_c^{i}(K^p)_{\Q_p}^*)=0.
\end{eqnarray}
\end{lem}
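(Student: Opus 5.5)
The plan is to read both vanishing statements off a lower bound on the grade, using Theorem \ref{mtj} and its compactly supported analogue. For a finitely generated module $M$ over a Noetherian ring $A$, the definition of $j_A(M)$ gives $E^q_A(M)=0$ for every $q<j_A(M)$. So it suffices to show, for $i<d$,
\[
j_{\Q_p[[\widetilde{K_p}]]}(\tilde{H}^{i}(K^p)_{\Q_p}^*)\geq d-i+2,\qquad j_{\Q_p[[\widetilde{K_p}]]}(\tilde{H}_c^{i}(K^p)_{\Q_p}^*)\geq d-i+2 .
\]

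For (\ref{cdv1}) this is immediate from Theorem \ref{mtj}. It gives $j_{\Q_p[[\widetilde{K_p}]]}(\tilde{H}^i(K^p)_{\Q_p}^*)\geq 3(d-i)$, and $3(d-i)\geq d-i+2$ is equivalent to $d-i\geq 1$, which holds since $i<d$. Hence $E^q$ vanishes for $q\leq d-i+1$, in particular for $q=d-i$ and $q=d-i+1$.

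For (\ref{cdv2}) I would establish the analogue of Theorem \ref{mtj} for $\tilde{H}_c^i(K^p)_{\Q_p}$ by rerunning the argument of \S2 with a compactly supported version of $\cO^{\la}$. Concretely, I expect \cite{LP25} (together with \cite{RC24}) to provide an object $\cO^{\la}_c$, roughly $\cO^{\la}$ twisted by the cusp ideal sheaf on the minimal compactification, with three properties. First, it carries the same $\mathfrak{g}$-action, the same differential equations, and the same horizontal action and Casimir relation (Theorem \ref{HCas}); these are local on $\Fl$ and unaffected by the cuspidal twist. Second, it satisfies $H^i(\Fl,\cO^{\la}_c)\cong \tilde{H}^i_c(K^p)^{\la}\widehat\otimes C$. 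Third, it satisfies the Kodaira vanishing $H^{<d}(\Fl,\cO^{\la}_c\otimes\cL_\lambda^{-1})=0$ for parallel $\lambda=(k;w)$ with $k>0$; this should follow from Bhatt's vanishing, since the ample line bundle on the minimal compactification twisted by the cusp ideal still has the required vanishing.

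Given these inputs, every step of \S2.3 goes through verbatim: Proposition \ref{stpH2}, Lemmas \ref{ind1}, \ref{lemevH} and \ref{varlemevH}, Proposition \ref{propSigmavan}, Corollary \ref{corSigmavan}, and the (i-SL) proposition only use these three properties. So $\tilde{H}^i_c(K^p)^{\la}_E$ satisfies the conclusion of Theorem \ref{MT}. The proof of Theorem \ref{mtj}, via Lemma \ref{comBladim}, Proposition \ref{proplbdj} and Lemma \ref{lemj}, then gives $j(\tilde{H}_c^i(K^p)_{\Q_p}^*)\geq 3(d-i)$, and we conclude exactly as in the first case.

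The main obstacle is justifying the compactly supported inputs, especially the comparison isomorphism and the Kodaira vanishing for $\cO^{\la}_c$. These are not stated earlier in the paper and have to be extracted from \cite{LP25}, where I expect them in the form ``$H^{<d}$ vanishing for the cuspidal twist''. If that is unavailable, a fallback is the long exact sequence
\[
\cdots\to \tilde{H}^{i-1}(K^p)_{\Q_p}\to \tilde{H}^{i-1}_{\partial}(K^p)_{\Q_p}\to \tilde{H}^i_c(K^p)_{\Q_p}\to \tilde{H}^i(K^p)_{\Q_p}\to\cdots
\]
combined with (\ref{jadd}), which is an equality since the rings are Auslander--Gorenstein. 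One would then need a direct bound on the grade of the completed boundary cohomology, which is parabolically induced from the Borel. That bound looks delicate for small $i$, which is why I would try the $\cO^{\la}_c$ route first.
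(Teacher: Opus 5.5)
Your treatment of (\ref{cdv1}) is exactly the paper's. For (\ref{cdv2}) there is a genuine gap. Your main route cannot work, because the statement it would prove is false. Take $i=1$ and $d\geq 2$. Since $\tilde H^0_c=0$, the boundary sequence embeds $\tilde H^0(\partial)/\tilde H^0$ into $\tilde H^1_c$. Here $\tilde H^0(\partial)\cong\mathrm{Ind}^{\rmG(\Q_p)}_{\mathrm{B}(\Q_p)}\tilde H^0_{\mathrm{T}}$, and after inverting $p$ this is a finite sum of copies of $C(\widetilde J\backslash\widetilde{\GL_2(\cO_{F,p})},\Q_p)$ with $\dim\widetilde J=d+d_0$ (notation of the paper's proof; for $j=0$ the group $H$ is exactly the closure of a congruence subgroup of $(\cO_F^\times)^2$). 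Its dual therefore has grade $d+d_0\leq 2d-1$, while the dual of $\tilde H^0$ has grade $\geq 3d$. Additivity of grade then gives $j(\tilde H^1_c(K^p)^*_{\Q_p})\leq d+d_0<3(d-1)$ as soon as $d\geq 3$. Even for $d=2$ your analogue of Theorem~\ref{MT} for $\tilde H^1_c$ fails. The boundary contains principal series induced from characters of non-integral parallel weight (e.g.\ pulled back along $N_{F_p/\Q_p}$). On these every $\Omega_\tau$ acts by a scalar different from all $\frac12(k+1)^2-\frac12$, $k\geq 0$, so no finite-codimensional ideal of $U(\mathfrak{sl}_{2,\tau})$ kills a nonzero subquotient. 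The input that breaks is the compactly supported Kodaira vanishing you expect to extract from Bhatt's theorem. For $d\geq 2$ the cusps of the minimal compactification are isolated points. In $0\to\mathcal{I}\otimes\omega^{-k}\to\omega^{-k}\to\omega^{-k}|_{\mathrm{cusps}}\to 0$ one has $H^0(\omega^{-k})=0$ but $H^0(\mathrm{cusps},\omega^{-k})\neq 0$. Hence $H^1(\mathcal{I}\otimes\omega^{-k})\neq 0$, so vanishing already fails in degree $1<d$.

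Your fallback is the paper's actual proof, but it leaves out the one step with content. You need $j\bigl((\mathrm{Ind}^{\rmG(\Q_p)}_{\mathrm{B}(\Q_p)}\tilde H^j_{\mathrm{T},\Q_p})^*\bigr)\geq d+1$ for every $j$. With this, for $1\leq i<d$ you combine the sequence $\tilde H^{i-1}(\partial)\to\tilde H^i_c\to\tilde H^i$, additivity, $j(\tilde H^i(K^p)^*_{\Q_p})\geq 3(d-i)\geq d-i+2$ and $d-i+1\leq d$, and these kill $E^{d-i}$ and $E^{d-i+1}$. The case $i=0$ (the only one when $d=1$) is trivial since $\tilde H^0_c=0$ by non-compactness. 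The paper gets the bound by mapping $\mathrm{Ind}\,\tilde H^j_{\mathrm{T}}$, injectively after inverting $p$, into $C(\widetilde J\backslash\widetilde{\GL_2(\cO_{F,p})},\Z_p)^{\oplus l}$. Here $J$ consists of upper-triangular matrices with unipotent entry in $\cO_{F,p}$ and diagonal in the subgroup $H$ acting trivially on $\tilde H^j_{\mathrm{T}}$. The unipotent radical contributes only $d$ to $\dim\widetilde J$, which is exactly one short at $i=1$. The missing dimension comes from the elements $(a,a^{-1})$ with $a$ in the closure of the global units; this has dimension $d_0\geq 1$ by Dirichlet's unit theorem, since $d\geq 2$. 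So the low-degree case is not delicate boundary analysis; it needs exactly this global input.
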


\begin{proof}
\eqref{cdv1} follows from Theorem \ref{mtj} because $d-i+1<3(d-i)$. For \eqref{cdv2}, since the Hilbert modular variety is non-compact, $\tilde{H}^{0}_c(K^p)_{\Q_p}^*=0$. Thus we may assume $i\geq 1$ and $d\geq 2$ in the rest.

As explained in \cite[\S 1.5]{CE12}, there is long exact sequence
\[\cdots \tilde{H}^{\bullet -1}(\partial) \to \tilde{H}^{\bullet}_c \to \tilde{H}^{\bullet} \to \tilde{H}^{\bullet }(\partial)\to \cdots\]
where $\tilde{H}^{\bullet }(\partial)$ is the boundary completed cohomology and can be described using Borel-Serre compactification. Specialized to the Hilbert case, the formula (1.4) in the reference says that as a representation $\rmG(\Q_p)$, 
\[\tilde{H}^{\bullet }(\partial)\cong \mathrm{Ind}^{\rmG(\Q_p)}_{\mathrm{B}(\Q_p)} \tilde{H}^{\bullet}_{\mathrm{T}}\]
where $\mathrm{B}$ denotes the restriction of scalars of the upper triangular Borel subgroup from $F$ to $\Q$, $ \mathrm{Ind}$ denotes the continuous induction, $\tilde{H}^{\bullet}_{\mathrm{T}}$ is the completed cohomology (of certain tame level) associated to the diagonal torus $\mathrm{T}\cong (\Res_{F/\Q}\GL_1)^2$ of $\mathrm{B}$ and $\mathrm{B}(\Q_p)$ acts on it via the quotient map to $\mathrm{T}(\Q_p)$. Now $d-i+1\leq d$. From the long exact sequence, it suffices to show
\[E^{\leq d}_{\Q_p[[\widetilde{K_p}]]}(\mathrm{Ind}^{\rmG(\Q_p)}_{\mathrm{B}(\Q_p)} \tilde{H}^{j}_{T,\Q_p})=0,\]
for any $j$, equivalently $j_{\Q_p[[\widetilde{K_p}]]}(\mathrm{Ind}^{\rmG(\Q_p)}_{\mathrm{B}(\Q_p)} \tilde{H}^{j}_{T,\Q_p})\geq d+1$.

Let $H$ be the subgroup of $(\cO_{F,p}^\times)^2\subseteq (F_p^\times)^2\cong \mathrm{T}(\Q_p)$ acting trivially on the completed cohomology $\tilde{H}^{j}_{\mathrm{T}}$. Then $H$ contains $Z_0:=\ker(\GL_2(\cO_{F,p})\to \widetilde{\GL_2(\cO_{F,p})})$ and an open subgroup of $(\overline{\cO_F^\times})^2$ (depending on the level), 
where $\overline{\cO_F^\times}$ is the closure of $\cO_F^\times$ in $\cO_{F,p}^\times$.
Denote by $d_0=\dim \overline{\cO_F^\times}$ as a $p$-adic Lie group.
Since we assume $d\geq 2$, $\cO_F^\times$ is not finite by Dirichlet's unit theorem, hence $d_0\geq 1$.
(Leopoldt's conjecture predicts that $d_0=d-1$.) Consider the following subgroup of $\mathrm{B}(\Q_p)$
\[
J=\left \{ \begin{pmatrix} a & n\\ 0 & b  \end{pmatrix}, (a,b)\in H,\, n\in\cO_{F,p}\right\}.
\]
$\widetilde{J} :=J/Z_0$ has dimension at least $d_0 +d$, where $d$ comes from the dimension of its unipotent subgroup and $d_0$ comes from the subgroup $\{(a,a^{-1})\in H, a\in\overline{\cO_F^\times}\}$. Since $ \tilde{H}^{j}_{T}$ is an admissible representation of $\mathrm{T}(\Q_p)/H$ we can find a $(\cO_{F,p}^\times)^2$-equivariant map
\[\phi:\tilde{H}^{j}_{T}\to C((\cO_{F,p}^\times)^2/H,\Z_p)^{\oplus l}\]
for some $l$ which is injective after inverting $p$. 
As representations of $\GL_2(\cO_{F,p})$, we have
\begin{eqnarray*}\mathrm{Ind}^{\rmG(\Q_p)}_{\mathrm{B}(\Q_p)} \tilde{H}^{j}_{T} \cong \mathrm{Ind}^{\GL_2(\cO_{F,p})}_{\mathrm{B}(\Q_p)\cap\GL_2(\cO_{F,p})} \tilde{H}^{j}_{T} &\stackrel{\mathrm{Ind}\phi}{\to}& \mathrm{Ind}^{\GL_2(\cO_{F,p})}_{\mathrm{B}(\Q_p)\cap\GL_2(\cO_{F,p})} C((\cO_{F,p}^\times)^2/H,\Z_p)^{\oplus l}\\
&=&C(J\setminus\GL_2(\cO_{F,p}),\Z_p)^{\oplus l}\\
&=&C(\widetilde{J}\setminus\widetilde{\GL_2(\cO_{F,p})},\Z_p)^{\oplus l}.
\end{eqnarray*}
Again $\mathrm{Ind}\phi$ is injective after inverting $p$. Hence it's easy to see from the Gelfand-Kirillov dimension point of view that 
\[j_{\Q_p[[\widetilde{K_p}]]}(\mathrm{Ind}^{\rmG(\Q_p)}_{\mathrm{B}(\Q_p)} (\tilde{H}^{j}_{M,\Q_p})^*)\geq \dim \widetilde{J}\geq d+d_0\geq d+1.\]
\end{proof}

\begin{rem}
When $d\geq 2$, \cite[Theorem 4.9]{HJ23} shows that $\tilde{H}_c^d(K^p)\cong \tilde{H}^d(K^p)$ assuming Leopoldt's conjecture. The authors also mentioned (Remark 4.11 loc.cit.) the possibility of establishing the isomorphism using the known bounds for the Leopoldt's conjecture. In fact the trivial bound will be enough here (like in our proof) and thus one can deduce \eqref{cdv2} from  \eqref{cdv1}. 
\end{rem}

\begin{rem}
We saw that the second isomorphism in Theorem \ref{mtref}
 is very easy to show for modular curves because of the vanishing of $\tilde{H}^0_c$. Emerton observed \cite[Remark 5.4.2]{Eme1} that this simple fact implies the density of algebraic vectors in completed cohomology, which is a crucial ingredient used in his proof of the local-global compatibility. One motivation of our work is to better understand and generalize Emerton's argument.
\end{rem}

\begin{cor}
Under the same assumption, as admissible Banach representations of $\widetilde{K_p}$, both $\tilde{H}_c^d(K^p)_{\Q_p}$ and $\tilde{H}^d(K^p)_{\Q_p}$ can be written as quotients of $C(\widetilde{K_p},\Q_p)^{\oplus m}$ for some $m$.
\end{cor}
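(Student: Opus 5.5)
The plan is to dualize the reflexivity statement of Theorem \ref{mtref}. Write $\Lambda=\Q_p[[\widetilde{K_p}]]$, and let $M$ be either $\tilde{H}^d(K^p)_{\Q_p}^*$ or $\tilde{H}_c^d(K^p)_{\Q_p}^*$. This is a finitely generated $\Lambda$-module by admissibility. By Theorem \ref{mtref} we have $M\cong\Hom_\Lambda(N,\Lambda)$, where $N$ is the other one of the two modules, and $N$ is also finitely generated.

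Since $\Lambda$ is Noetherian, I will choose a surjection $\Lambda^{\oplus m}\twoheadrightarrow N$. Applying $\Hom_\Lambda(-,\Lambda)$, which is left exact, gives an injection of left $\Lambda$-modules
\[M\cong\Hom_\Lambda(N,\Lambda)\hookrightarrow \Hom_\Lambda(\Lambda^{\oplus m},\Lambda)\cong\Lambda^{\oplus m}.\]
One point needs care here. $\Hom_\Lambda(N,\Lambda)$ is naturally a right module. I will turn it back into a left module using the anti-involution $g\mapsto g^{-1}$ of $\Lambda$, consistent with how the isomorphisms in Theorem \ref{mtref} are formulated in \cite{CE12}. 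I will also check that this twist matches the regular representation $C(\widetilde{K_p},\Q_p)$ under Schikhof duality. This bookkeeping is the main thing to get right.

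Next I pass back to Banach representations using the anti-equivalence between admissible Banach representations of $\widetilde{K_p}$ and finitely generated $\Lambda$-modules (Schneider--Teitelbaum). Under this anti-equivalence $\Lambda$ corresponds to $C(\widetilde{K_p},\Q_p)$, and injections of finitely generated modules correspond to surjections of admissible Banach representations. The injection $M\hookrightarrow\Lambda^{\oplus m}$ therefore dualizes to a $\widetilde{K_p}$-equivariant surjection $C(\widetilde{K_p},\Q_p)^{\oplus m}\twoheadrightarrow \tilde{H}^d(K^p)_{\Q_p}$, and likewise for $\tilde{H}^d_c$.

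I also have to make sure that strict maps and closed images behave as expected. This is automatic in the admissible category, since it is abelian and the anti-equivalence is exact.
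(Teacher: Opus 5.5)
Your proposal is correct and is essentially the paper's argument: surject $\Lambda^{\oplus m}$ onto the dual of the other degree-$d$ cohomology, apply $\Hom_\Lambda(-,\Lambda)$ to embed the given dual into $\Lambda^{\oplus m}$ via Theorem \ref{mtref}, and then dualize using $\Lambda^*=C(\widetilde{K_p},\Q_p)$ to get the surjection. Your extra attention to the left/right module structure is bookkeeping that the paper leaves implicit.
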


This can be viewed as a density result. See Corollary \ref{ccden} below.

\begin{proof}
Write $\Lambda=\Z_p[[\widetilde{K_p}]]$.
By admissibility of $\tilde{H}_c^d(K^p)_{\Q_p}$, its dual $\tilde{H}_c^d(K^p)_{\Q_p}^*$ is a finitely generated module of $\Lambda[\frac{1}{p}]$. Hence we can find a quotient map $\Lambda[\frac{1}{p}]^{\oplus m}\to \tilde{H}_c^d(K^p)_{\Q_p}^*$ of $\Lambda[\frac{1}{p}]$-modules. Taking $E^0_{\Lambda[\frac{1}{p}]}$, we get an inclusion of $\Lambda[\frac{1}{p}]$-modules 
\[E^0_{\Lambda[\frac{1}{p}]}(\tilde{H}_c^d(K^p)_{\Q_p}^*) \subseteq \Lambda[\frac{1}{p}]^{\oplus m}.\] 
By Theorem \ref{mtref}, the first term is isomorphic to $\tilde{H}^d(K^p)_{\Q_p}^*$. Hence taking the continuous $\Hom$ to $\Q_p$, we get a surjective map $C(\widetilde{K_p},\Q_p)^{\oplus m}\to \tilde{H}^d(K^p)_{\Q_p}$. (Here we use $\Lambda[\frac{1}{p}]^*=C(\widetilde{K_p},\Q_p)$). The other claim for $\tilde{H}_c^d(K^p)_{\Q_p}$ can be proved similarly.
\end{proof}

We can also allow a central character. 
Let $E$ be a finite extension of $\Q_p$ and 
\[\chi:Z(\widetilde{K_p})\to E^\times\] 
a continuous character of the center of $\widetilde{K_p}$.
We denote by $\tilde{H}^d(K^p)_{\chi}$ the $\chi$-isotypic subspace of $\tilde{H}^d(K^p)_{E}$ and define $\tilde{H}_c^d(K^p)_{\chi}$ and $C(\widetilde{K_p},\chi)\subseteq C(\widetilde{K_p},E)$ similarly. The dual $E[[\widetilde{K_p}]]_\chi$ of $C(\widetilde{K_p},\chi)$ is a natural algebra quotient of $\Z_p[[\widetilde{K_p}]]\otimes_{\Z_p} E$.

\begin{cor} \label{ccden}
Suppose $K^pK_p$ is neat. There are natural isomorphisms
\[\Hom_{E[[\widetilde{K_p}]]_\chi}(\tilde{H}^d(K^p)_{\chi}^*,E[[\widetilde{K_p}]]_\chi)=\tilde{H}_c^d(K^p)_{\chi}^*,\]
\[\Hom_{E[[\widetilde{K_p}]]_\chi}(\tilde{H}_c^d(K^p)_{\chi}^*,E[[\widetilde{K_p}]]_\chi)=\tilde{H}^d(K^p)_{\chi}^*.\]
Consequently,  as admissible Banach representations of $\widetilde{K_p}$, both $\tilde{H}_c^d(K^p)_{\chi}$ and $\tilde{H}^d(K^p)_{\chi}$ can be written as quotients of $C(\widetilde{K_p},\chi)^{\oplus m}$ for some $m$. If moreover $K_p=\GL_2(\cO_{F,p})$, and $\chi$ induces an algebraic character of $\cO_{F,p}^\times$, the $\GL_2(\cO_{F,p})$-algebraic vectors are dense in both $\tilde{H}_c^d(K^p)_{\chi}$ and $\tilde{H}^d(K^p)_{\chi}$.
\end{cor}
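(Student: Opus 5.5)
The plan is to repeat the proof of Theorem \ref{mtref} and the corollary after it, with $\Q_p[[\widetilde{K_p}]]$ replaced by the quotient algebra $E[[\widetilde{K_p}]]_\chi$. Then we deduce density from the surjection $C(\widetilde{K_p},\chi)^{\oplus m}\to \tilde{H}^d(K^p)_\chi$.

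\textbf{Step 1: the duality spectral sequences.} The $\chi$-isotypic part is the derived base change along $E[[\widetilde{K_p}]]\to E[[\widetilde{K_p}]]_\chi$. Equivalently, it is the completed cohomology of the local system on $\Sh_{K^pK_p}$ attached to $\chi$. Running the Poincaré duality argument of \cite[\S1.3]{CE12} for this local system gives spectral sequences
\[E_2^{i,j}=E^i_{E[[\widetilde{K_p}]]_\chi}(\tilde{H}^j(K^p)_\chi^*)\Longrightarrow \tilde{H}_c^{2d-i-j}(K^p)_\chi^*,\]
and the analogous one with the roles of $\tilde{H}$ and $\tilde{H}_c$ exchanged. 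The same reduction as before shows it suffices to prove
\[E^{d-i}(\tilde{H}^i(K^p)_\chi^*)=E^{d-i+1}(\tilde{H}^i(K^p)_\chi^*)=0 \quad\text{for } i<d,\]
and likewise for $\tilde{H}_c$. This requires grade $\geq d-i+2$ over $E[[\widetilde{K_p}]]_\chi$.

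\textbf{Step 2: grade bounds (the main obstacle).} We need an analogue of Theorem \ref{mtj}. The ring $E[[\widetilde{K_p}]]_\chi$ is, up to finite index, $E[[\widetilde{K_p}/Z]]$ twisted by $\chi$, where $Z$ is the center. Its locally analytic distribution algebras are flat over $U(\mathfrak{sl}_2(F_p))_E$, since the central Lie algebra acts through the scalar $d\chi$.
\begin{itemize}
\item Proposition \ref{proplbdj} and Lemma \ref{lemj}, applied with $L=\prod_{\tau_1,\dots,\tau_{d-i}}\mathfrak{sl}_{2,\tau}$, together with Theorem \ref{MT} (which is compatible with taking $\chi$-parts, since $\Fil^\bullet$ is $\Lie\rmG_E$-stable), give grade $\geq 3(d-i)$.
\item For $i\geq1$ this is at least $d-i+2$. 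For $i=0$ we need $3d\geq d+2$, which holds for $d\geq1$.
\end{itemize}
The delicate point is Lemma \ref{comBladim}, i.e.\ comparing the Banach-module grade with the grade of the locally analytic vectors for the $\chi$-twisted Iwasawa algebra. I would handle this by choosing a uniform $H$ with $H\cap Z$ a direct factor up to finite index. Then $E[[H]]_\chi\cong E[[H/(H\cap Z)]]$ after twisting by an extension of $\chi$, which reduces everything to \cite{DPS23} for the group $H/(H\cap Z)$.

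For the compactly supported cohomology, I would reuse the boundary argument. The boundary term is still an induction from $\mathrm{B}$, and its $\chi$-part is again dominated by $C(\widetilde J\setminus\widetilde{\GL_2(\cO_{F,p})},\chi)$, whose grade is at least $d+d_0-(\text{central dimension})$. The dimension count of the unipotent part plus the $(a,a^{-1})$ part is unaffected by the center, so this still gives grade $\geq d+1$. For $i=0$ we have $\tilde{H}^0_c=0$ as before.

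\textbf{Step 3: quotients and density.} The quotient statement follows exactly as in the previous corollary. Take a surjection $E[[\widetilde{K_p}]]_\chi^{\oplus m}\to\tilde{H}_c^d(K^p)_\chi^*$ and apply $\Hom(-,E[[\widetilde{K_p}]]_\chi)$. Reflexivity identifies the result with an injection $\tilde{H}^d(K^p)_\chi^*\hookrightarrow E[[\widetilde{K_p}]]_\chi^{\oplus m}$, and dualizing gives the surjection.

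For density, take $K_p=\GL_2(\cO_{F,p})$ with $\chi$ algebraic. By the algebraic Peter–Weyl theorem (Mahler/Amice density), the $K_p$-algebraic vectors are dense in $C(\widetilde{K_p},\chi)$: they are the sum of $W\otimes W^\vee$ over algebraic $W$ with central character $\chi$. A continuous $\widetilde{K_p}$-equivariant surjection of Banach spaces maps algebraic vectors to algebraic vectors and dense subspaces to dense subspaces. Hence the algebraic vectors are dense in $\tilde{H}^d(K^p)_\chi$, and likewise in $\tilde{H}_c^d(K^p)_\chi$.
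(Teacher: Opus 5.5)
There is a genuine gap, and it sits in the first sentence of your Step 1. You take for granted that the $\chi$-isotypic part ``is'' the derived base change along $E[[\widetilde{K_p}]]\to E[[\widetilde{K_p}]]_\chi$ (equivalently, the cohomology of a $\chi$-local system).

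Write $A=E[[\widetilde{K_p}]]$, $I$ for the ideal defined by $\chi$, and $\tilde M=\tilde H^d(K^p)_E^*$. The dual of $\tilde H^d(K^p)_\chi$ is the \emph{underived} quotient $\tilde M/I\tilde M$. Base-changing the perfect complex $C_\bullet$ of \cite{CE12} to $A/I$ instead gives a duality spectral sequence whose terms involve $H_j(C_\bullet\otimes^{\mathbb{L}}_A A/I)$. These are related to the $\chi$-parts only through a Tor spectral sequence, and they coincide with them once $\mathrm{Tor}^A_{>0}(\tilde H^\bullet(K^p)_E^*,A/I)=0$.

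That vanishing is not formal: taking $\chi$-isotypic parts is only left exact, and for the trivial representation with $\chi=1$ the higher Tor's are nonzero. It does hold here, but for a reason you never invoke, which is the heart of the paper's proof:
\begin{itemize}
\item After reducing to connected $\Sh_{K,\bC}$, one has $\tilde H^i(K^p)\cong\Ind_{K_p^\circ}^{\widetilde{K_p}}\tilde H^i(\Sh_K^\circ)$.
\item The component stabilizer $K_p^\circ$ is locally isomorphic to $K_p\cap\SL_2(F_p)$, so $K_p^\circ\cap Z(\widetilde{K_p})$ is finite and $K_p^\circ\cdot Z(\widetilde{K_p})$ is open (Lemma \ref{Kpcirc}).
\item Hence $\tilde M\cong A\otimes_{A_0}M$ with $A_0=\cO_E[[K_p^\circ]]$. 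When the center is torsion-free, $A\cong A_0[[x_1,\dots,x_m]]^{\oplus n}\otimes_{\cO_E}E$ and $I=(x_1,\dots,x_m)$, where $x_k=g_k-\chi(g_k)$.
\end{itemize}
The same left-exactness problem breaks your treatment of $\tilde H_c$. From $\tilde H^{i-1}(\partial)\to\tilde H^i_c\to\tilde H^i$ you cannot bound the $\chi$-part of $\tilde H^i_c$ by the $\chi$-parts of the outer terms, because the $\chi$-part of a quotient need not be a quotient of the $\chi$-part.

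Once you have this induced structure, all of Step 2 is unnecessary. The paper proves $\Hom_A(\tilde M,A)/I\Hom_A(\tilde M,A)\cong\Hom_A(\tilde M,A/I)$ by rewriting both sides as $\Hom_{A_0}(M,-)$ and reducing $A_0[[x_1,\dots,x_m]]$ modulo the $x_k$. Semisimplicity of $E[\Gamma]$ handles the finite torsion $\Gamma$ of the center. So the $\chi$-version is a direct base change of Theorem \ref{mtref}, with no new grade estimates over $E[[\widetilde{K_p}]]_\chi$. Your Step 3 matches the paper.

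Your density step is also under-justified. Mahler's theorem gives density of algebraic functions in $C(\GL_2(\cO_{F,p}),E)$, but this does not transfer to $C(\widetilde{K_p},\chi)$. There is no continuous equivariant map from $C(\widetilde{K_p},E)^{\oplus r}$ onto a dense subspace of $C(\widetilde{K_p},\chi)$: dually, since $Z(\widetilde{K_p})$ is infinite, $A$ has no nonzero element killed by $I$. The paper instead starts from the $\PGL_2$ case \cite{Pas14}. It deduces that the closure of the algebraic $\chi$-functions is a module over continuous functions with trivial central character. It then uses that, near any point, this module is generated by a nonvanishing translate of a single algebraic $\chi$-function.
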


\begin{proof}
We will only prove the first isomorphism  as the same argument works for the other one.
Let $K=K^pK_p$. Fix a connected component $\Sh^\circ_{K,\bC}$ of $\Sh_{K,\bC}$. We can consider its completed cohomology
\[
\varprojlim_n\varinjlim_{K'_p\subseteq \rmG(\Q_p)} H^i(\pi_{K_p'}^{-1}(\Sh^\circ_{K,\bC}),\Z/p^n),
\]
where $\pi_{K_p'}:\Sh_{K^pK_p,\bC}\to \Sh_{K,\bC}$ denotes the projection morphism, and similarly the completed cohomology with compact support. Since $\tilde{H}^*(K^p)$ can be written as the direct sum of the completed cohomology of each component,  it suffices to prove Corollary \ref{ccden} for each connected component. To ease notation, we will assume from now on that $\Sh_{K,\bC}$ is connected.

Choose an element $x$ in $\pi_0(K^p):=\varprojlim_{K_p'} \pi_0(\Sh^\circ_{K^pK'_p,\bC})$ equivalently a compatible system of connected components $\Sh^\circ_{K^pK_p',\bC}$ of $\Sh_{K^pK_p',\bC}$. Consider the completed cohomology of this tower:
\[
\tilde{H}^i(\Sh_K^\circ)=\varprojlim_n\varinjlim_{K'_p\subseteq \rmG(\Q_p)} H^i(\Sh^\circ_{K^pK'_p,\bC},\Z/p^n).
\]
It is a representation of the stabilizer  of $\widetilde{K_p}$ at $x\in\pi_0(K^p)$ which we denote by $K_p^\circ$. Since $\widetilde{K_p}$ acts transitively on $\pi_0(K^p)$ by our assumption on the connectedness of $\Sh_{K,\bC}$, there is an isomorphism of $\widetilde{K_p}$-representations 
\begin{eqnarray} \label{indcc}
\tilde{H}^i(K^p)\cong \Ind_{K_p^\circ}^{\widetilde{K_p}} \tilde{H}^i(\Sh_K^\circ).
\end{eqnarray}
$\SL_2(F_p)$ acts trivially on component groups \cite[\S 2]{De71a}. We get a map
\[K_p\cap  \SL_2(F_p) \to K_p^\circ\]
which turns out to be locally an isomorphism \cite[Lemma 6.2.3]{LP25}. Hence $\Lie K_p^\circ$ is a semisimple Lie algebra. We thus obtain the following lemma.
\begin{lem} \label{Kpcirc}
$K_p^\circ\cap Z(\widetilde{K_p})$ is finite and $K_p^\circ\cdot Z(\widetilde{K_p})$ is an open subgroup of $\widetilde{K_p}$.
\end{lem}

Let $A=E[[\widetilde{K_p}]]$, $A_0=\cO_E[[K_p^\circ]]$, $M=\Hom_{\Z_p}(\tilde{H}^d(\Sh_K^\circ),\cO_E)$ and $\tilde{M}=\tilde{H}^d(K^p)_E^*$. The central character $\chi$ defines an (two-sided) ideal $I$ of $A$. Then $\tilde{H}^d(K^p)_\chi^*=\tilde{M}/I\tilde{M}$. In view of Theorem \ref{mtref}, it is enough to prove 
\[\Hom_A(\tilde{M},A)/I \Hom_A(\tilde{M},A)\cong \Hom_A(\tilde{M},A/I).\]
Now the isomorphism \eqref{indcc} becomes
\[\tilde{M}\cong A\otimes_{A_0} M.\]
Hence $\Hom_A(\tilde{M},A)=\Hom_{A_0}(M,A)$ and we only need to prove the following lemma.
\begin{lem}
For any $A_0$-module $M$,
\[\Hom_{A_0}(M,A)/I \Hom_{A_0}(M,A)\cong \Hom_{A_0}(M,A/I).\]
\end{lem}
\begin{proof}
First we assume $Z(\widetilde{K_p})\cong \Z_p^m$ (equivalently $Z(\widetilde{K_p})$ is torsion-free). Fix topological generators $g_1,\cdots,g_m$.
The ideal $I$ is generated by $x_i:=g_i-\chi(g_i)$, $i=1,\cdots,m$. By Lemma \ref{Kpcirc}, $K_p^\circ\cdot Z(\widetilde{K_p})\cong K_p^\circ\times Z(\widetilde{K_p})$ has finite index, say n, in $\widetilde{K_p}$. We have an isomorphism
\[A\cong A_0[[x_1,\cdots,x_m]]^{\oplus n}\otimes_{\cO_E} E\]
as $\cO_E[[K_p^\circ\times Z(\widetilde{K_p})]]$-modules,
and the lemma follows easily.

In general fix an isomorphism $Z(\widetilde{K_p})\cong \Z_p^m\times \Gamma$ where $\Gamma$ is a finite group. Let $I_0$ denote the ideal of $A$ generated by $g-\chi(g)$, $g\in \Z_p^m\subseteq Z(\widetilde{K_p})$. The argument in the torsion-free case shows that 
\[\Hom_{A_0}(M,A)/I_0 \Hom_{A_0}(M,A)\cong \Hom_{A_0}(M,A/I_0).\]
Now both sides are modules over  $E[\Gamma]$, which is a semisimple algebra. We complete the proof by modulo  the maximal ideal of $E[\Gamma]$ corresponding to $\chi|_{\Gamma}$.
\end{proof}

For the last density claim, it suffices to prove the following.
\end{proof}

\begin{lem}
 $\GL_2(\cO_{F,p})$-algebraic vectors are dense in $C(\GL_2(\cO_{F,p}),\chi)$ if $\chi:\cO_{F,p}^\times \to E^\times$ is an algebraic character.
\end{lem}

\begin{proof}
If $\chi$ is trivial, then we are reduced to case of $\PGL_2$ which was proved in \cite[Lemma A.1.]{Pas14}. The general case can be argued as follows. Let $\mathcal{C}_1$ denote the sheaf of $E$-valued continuous functions on $\PGL_2(\cO_{F,p})$.  The sheaf of $E$-valued continuous functions on $\GL_2(\cO_{F,p})$ with central character $\chi$ descends to a sheaf on $\PGL_2(\cO_{F,p})$ which will be denoted by $\mathcal{C}_\chi$. Clearly $\mathcal{C}_\chi$ is an invertible $\mathcal{C}_1$-module. Let $\mathcal{C}'_\chi \subseteq \mathcal{C}_\chi$ be the closure of algebraic functions in $C(\GL_2(\cO_{F,p}),\chi)$. It suffices to show $\mathcal{C}'_\chi =\mathcal{C}_\chi $. The case of $\PGL_2$ implies that $\mathcal{C}'_\chi $ is a sub $\mathcal{C}_1$-module. Now take a non-zero algebraic function $f$ in $C(\GL_2(\cO_{F,p}),\chi)$. On its non-vanishing locus $U$, $f$ generates  $\mathcal{C}_\chi $ as  a $\mathcal{C}_1$-module, hence $\mathcal{C}_\chi'|_U=\mathcal{C}_\chi|_U$. Using translates of $f$ by $\GL_2(\cO_{F,p})$, we see that $\mathcal{C}_\chi'=\mathcal{C}_\chi$ everywhere.
(This argument essentially appeared in the proof of Proposition 3.2.9 of \cite{Pan22FM}.)
\end{proof}

\bibliographystyle{amsalpha}

\bibliography{bib}

\end{document}